\newcommand{\bu}{\boldsymbol u}
\newcommand{\Om}{\Omega}
\newcommand{\bv}{\boldsymbol v}
\newcommand{\bw}{\boldsymbol w}
\newcommand{\bbu}{\boldsymbol b}
\newcommand{\btau}{\boldsymbol \tau}
\newcommand{\beps}{\boldsymbol \varepsilon}
\newcommand{\be}{\boldsymbol e}
\newcommand{\bE}{\boldsymbol E}
\newcommand{\bvar}{\boldsymbol \varphi}
\newcommand{\bs}{\boldsymbol s}
\newcommand{\bff}{\boldsymbol f}
\newtheorem{Theorem}{Theorem}[section]
\newtheorem{lema}[Theorem]{Lemma}
\newtheorem{remark}[Theorem]{Remark}
\newtheorem{Proof}{{\em Proof:}}
\newenvironment{proof}{\begin{Proof}\rm}{\hfill $\Box$ \end{Proof}}
\title{Error analysis of fully discrete mixed finite element data assimilation schemes for the Navier-Stokes equations 
}
\author{ Bosco
Garc\'{i}a-Archilla\thanks{Departamento de Matem\'atica Aplicada
II, Universidad de Sevilla, Sevilla, Spain. Research is supported by
Spanish MINECO under grant MTM2015-65608-P (bosco@esi.us.es).}
  \and Julia Novo\thanks{Departamento de
Matem\'aticas, Universidad Aut\'onoma de Madrid, Spain.  Research is supported
by Spanish MINECO
under grant MTM2016-78995-P (AEI/FEDER, UE) and VA024P17 (Junta de Castilla y Leon, ES) cofinanced by FEDER funds (julia.novo@uam.es).}
}
\begin{document}

\maketitle


%
\begin{abstract}
In this paper we consider fully discrete approximations with inf-sup stable mixed finite element methods in space to approximate the Navier-Stokes equations. A continuous downscaling data assimilation algorithm is analyzed in which measurements on a coarse scale are given
represented by different types of interpolation operators. For the time discretization an implicit Euler scheme, an implicit and a semi-implicit second order backward
differentiation formula
are considered. Uniform in time error estimates are obtained for all the methods for the error between the fully discrete approximation and the reference solution corresponding to the measurements. For the spatial discretization we consider both the Galerkin method and the Galerkin method with grad-div stabilization. For the last scheme error bounds in which the constants do not depend on inverse powers of the viscosity are obtained.
\end{abstract}

%

\noindent{\bf AMS subject classifications.} 35Q30,  65M12, 65M15, 65M20, 65M60, 65M70,\\ 76B75. \\
\noindent{\bf Keywords.} data assimilation, downscaling, Navier-Stokes equations, uniform-in-time error estimates, fully discrete schemes, mixed finite elements methods.


\section{Introduction}
Data assimilation refers to a class of techniques that combine experimental data and simulation in order to obtain better predictions in a physical system.
There is a vast literature on data assimilation methods
(see e.g., \cite{Asch_et_al_2016}, \cite{Daley_1993}, \cite{Kalnay_2003}, \cite{Law_Stuart_Zygalakis}, \cite{Reich_Cotter_2015}, and the references
therein). One of these techniques is  nudging in which a penalty term is added with the aim of driving the approximate solution towards coarse mesh observations of the data. In a recent work~\cite{Az_Ol_Ti}, a new approach, known as continuous data assimilation,  is introduced for a large class of dissipative partial differential equations, including Rayleigh-B\'enard convection \cite{FJTi}, the planetary geostrophic ocean dynamics model \cite{FLTi}, etc.
(see also references therein). Continuous data assimilation has also been used in numerical studies, for example, with the Chafee-Infante reaction-diffusion equation,  the Kuramoto-Sivashinsky
equation (in the context of feedback control)~\cite{Lunasin-Titi}, Rayleigh-B\'enard convection equations~\cite{Altaf_et_al}, \cite{FJJTi}, and the Navier-Stokes equations~\cite{Gesho-Olson-Titi}, \cite{HOTi}. However, there is still less numerical analysis of this technique. The present work concerns with the numerical analysis of continuous data assimilation for the Navier-Stokes equations for fully discrete schemes with inf-sup stable mixed finite element methods (MFE) in space.

We consider the Navier-Stokes equations (NSE)
\begin{align}
\label{NS} \partial_t\bu -\nu \Delta \bu + (\bu\cdot\nabla)\bu + \nabla p &= \bff &&\text{in }\ (0,T]\times\Omega,\nonumber\\
\nabla \cdot \bu &=0&&\text{in }\ (0,T]\times\Omega,
\end{align}
in a bounded domain $\Omega \subset {\mathbb R}^d$, $d \in \{2,3\}$. In~\eqref{NS},
$\bu$ is the velocity field, $p$ the kinematic pressure, $\nu>0$ the kinematic viscosity coefficient,
 and $\bff$ represents the accelerations due to external body forces acting
on the fluid. The Navier-Stokes equations \eqref{NS} must be complemented with boundary conditions. For simplicity,
we only consider homogeneous
Dirichlet boundary conditions $\bu = \boldsymbol 0$ on $\partial \Omega$.


As in \cite{Mondaini_Titi} we consider given coarse spatial mesh measurements, corresponding to a solution $\bu$ of \eqref{NS}, observed at a coarse spatial mesh. We assume that the measurements are continuous in time and error-free and we denote by $I_H(\bu)$ the operator used for interpolating these
measurements, where $H$ denotes the resolution of the coarse spatial mesh.
Since no initial condition for $\bu$ is available one cannot  simulate equation \eqref{NS} directly.
To overcome this difficulty it was suggested in~\cite{Az_Ol_Ti} to consider instead a solution~$\bv$ of the following system
\begin{eqnarray}\label{eq:mod_NS}
 \partial_t\bv -\nu \Delta \bv + (\bv\cdot\nabla)\bv + \nabla \tilde p&=&\bff -\beta(I_H(\bv)-I_H(\bu)),\ \text{in }\ (0,T]\times\Omega,\nonumber\\
\nabla \cdot \bv&=&0, \ \text{in }\ (0,T]\times\Omega,
\end{eqnarray}
where $\beta$ is the nudging parameter.

In \cite{Ours}  the continuous in time data assimilation algorithm is analyzed and  two different methods are considered:  the Galerkin method and the Galerkin method  with grad-div stabilization.
In this paper we extend the results in \cite{Ours} to the fully discrete case.
For the time discretization of equation \eqref{eq:mod_NS}, we consider the
fully implicit backward Euler method and the second
order backward differentiation formula (BDF2), both in the fully implicit and
semi-implictit cases.
For the spatial discretization we consider  inf-sup stable mixed finite elements.
As in \cite{Ours}  we consider both  the Galerkin method and the Galerkin method  with grad-div stabilization.
 Although grad-div stabilization
was originally proposed in \cite{FH88} to improve the conservation of mass in
finite element methods, it was observed in the
simulation of turbulent flows in \cite{JK10}, \cite{RL10} that grad-div stabilization has the effect
 of producing stable (non-oscillating) simulations.

  For the three time discretization methods that we consider and the two different spatial discretizations (Galerkin method with or without grad-div stabilization) we prove uniform-in-time error estimates for the approximation of the unknown reference solution, $\bu$, corresponding to the coarse-mesh measurement~$I_H(\bu)$.
 As in \cite{Ours}, for the Galerkin method without stabilization, the spatial error bounds we prove are optimal, in the sense that the rate of convergence is that of the best interpolant. In the case where grad-div stabilization is added, as in~\cite{grad-div1}, \cite{grad-div2}, we get error bounds where the error constants do not depend on inverse powers of~the viscosity parameter~$\nu$. This fact is of importance in many applications where viscosity is orders of magnitude smaller than the velocity
(or with large Reynolds number).

We now comment on the literature on numerical methods for~\eqref{eq:mod_NS}.
In~\cite{Mondaini_Titi}, a semidiscrete postprocessed Galerkin
spectral method
for the two-dimensional Navier-Stokes equations is studied. Under suitable conditions on the nudging parameter~$\beta$, the coarse mesh size~$H$,
and the degrees of freedom in the spectral method, uniform-in-time error estimates are obtained for the error between the numerical approximation to~$\bv$ and~$\bu$. The use of a postprocessing technique introduced in~\cite{Titi1}~\cite{Titi2}, gives a method with a  higher convergence rate than the standard spectral Galerkin  method. A fully-discrete method for the spatial discretization in~\cite{Mondaini_Titi} is analyzed in~\cite{Ibdah_Mondaini_Titi},
where the (implicit and semi-implicit) backward Euler method  is used for time discretization and uniform-in-time
error estimates are obtained with the same convergence rate in space as in~\cite{Mondaini_Titi}.

Other related works are ~\cite{Larios_et_al} and~\cite{Rebholz-Zerfas}.
In~\cite{Rebholz-Zerfas} they only analyze linear problems and, for the proof of the results on the Navier-Stokes equations they
present, they refer to~\cite{Larios_et_al} with some differences that they point out. They also present a wide collection of numerical experiments.
 In~\cite{Larios_et_al},
 the authors consider fully discrete approximations to equation~\eqref{eq:mod_NS}
where the spatial discretization is performed with a MFE Galerkin method with grad-div stabilization. A semi-implicit BDF2 scheme  in time  is analyzed in~\cite{Larios_et_al}, and, as in~\cite{Ibdah_Mondaini_Titi}, \cite{Mondaini_Titi}, \cite{Ours} and the present paper uniform-in-time error bounds are obtained. In the present paper, apart from the semi-implicit BDF2 scheme of~\cite{Larios_et_al} we also analyze the implicit Euler and the implicit BDF2 schemes. Respect to the spatial errors we obtain the same results as in \cite{Ours}. More precisely, comparing with~\cite{Larios_et_al}, we remove the
dependence on inverse powers on $\nu$ on the error constants of the Galerkin method with grad-div stabilization. Also, for the standard Galerkin method, although with constants depending on inverse powers on $\nu$, we get a rate of convergence for the method in space one unit larger than the method in~\cite{Larios_et_al}. These results are sharp in space, as it can be checked in the numerical experiments of \cite{Ours}. This means
that the Galerkin method with grad-div stabilization has a rate of convergence $r$ in the $L^2$ norm of the velocity using polynomials of degree $r$ and that error constants are independent on~$\nu^{-1}$ for the grad-div stabilized method, and dependent in the case of the standard method. The analysis in~\cite{Larios_et_al} is restricted to $I_H\bu$ being an
interpolant for non smooth functions (Cl\'ement, Scott-Zhang, etc),
since explicit use is made
of bounds \eqref{eq:L^2inter} and~\eqref{eq:cotainter}, which are not valid for nodal (Lagrange) interpolation.
 In the present paper, as in \cite{Ours}, we prove error bounds both for the case in which $I_H \bu$ is an interpolant for non smooth functions but also for the case in  which $I_H \bu$ is a standard Lagrange interpolant. To our knowledge reference \cite{Ours} and the present paper are the only references in the literature where such kind of bounds are proved.

It is important to mention that,
compared
 with  \cite{Larios_et_al},~\cite{Rebholz-Zerfas}, \cite{Ibdah_Mondaini_Titi}
 and \cite{Mondaini_Titi}, and as in \cite{Ours}, we do not need to assume an upper bound  on the nudging parameter~$\beta$.  The authors of  \cite{Larios_et_al} had
 observed (see \cite[Remark 3.8]{Larios_et_al}) that the upper bound on~$\beta$ they required in the analysis does not hold in the numerical experiments.
 This fact is corroborated by the numerical experiments  in  \cite{Ours} that show numerical evidence of the advantage of increasing the value of the nudging parameter well above the upper bound in required in previous works in the literature.
The analysis in the present paper, as that in~\cite{Ours}, does not demand any upper bound on the nudging parameter~$\beta$.

 For the error analysis of the fully discrete method with the implicit Euler scheme we do not need to assume any restriction on the size of the time step $\Delta t$ (Theorem~\ref{Th:main_muno0} below). For the case of the implicit BDF2 and semi-implicit BDF2 (Theorems~\ref{th:main_bdf2}
 and~\ref{th:main_bdf2_semimp} below) we only need to assume $\Delta t$ is smaller than a constant (that depends on norms of the theoretical exact solution).

The rest of the paper is as follows. In Section 2 we state some preliminaries and notation. In Section 3 we analyze the fully discrete schemes. First of all, some general results are stated and proved and then the error analysis of the implicit Euler method, the implicit BDF2 method and the
semi-implicit BDF2 schemes is carried out. In Section 4 some numerical experiments are shown.

\section{Preliminaries and Notation}
\label{Se:prelim}
We denote by~$W^{s,p}(D)$ the standard Sobolev space of functions defined on the domain $D\subset\mathbb{R}^d$ with distributional derivatives of order up to $s$ in $L^p(D)$. By~$|\cdot|_{s,p,D}$ we denote the standard seminorm, and, following~\cite{Constantin-Foias}, for~$W^{s,p}(D)$ we
will define the norm~$\|\cdot\|_{s,p,D}$ by
$$
\left\| f\right\|_{s,p,D}^p=\sum_{j=0}^s \left|D\right|^{\frac{p(j-s)}{d}} \left| f\right|_{j,p,D}^p,
$$
where $|D|$ denotes the Lebesgue measure of~$D$. Observe 
that $\left\|f\right\|_{m,p,D}\left|D\right|^{\frac{m}{d}-\frac{1}{p}}$ is scale invariant. If $s$ is not a positive integer, $W^{s,p}(D)$ is defined by interpolation \cite{Adams}.
In the case $s=0$, we have $W^{0,p}(D)=L^p(D)$. As it is customary, $W^{s,p}(D)^d$ will be endowed with the product norm and, since no confusion will arise, it will
also be denoted by $\|\cdot\|_{W^{s,p}(D)}$. When  $p=2$, we will use  $H^s(D)$ to denote the space $W^{s,2}(D)$. By $H_0^1(D)$ we denote the closure in $H^1(D)$ of the set of infinitely differentiable functions with compact support
in $D$.
The inner product of $L^2(\Omega)$ or $L^2(\Omega)^d$ will be denoted by $(\cdot,\cdot)$ and the corresponding norm by $\|\cdot\|_0$.
The norm of the dual space  $H^{-1}(\Omega)$  of $H^1_0(\Omega)$
is denoted by $\|\cdot\|_{-1}$.

We will use the following Sobolev's inequality \cite{Adams}: For~$s>0$, let  $1\le p<d/s$
and~$q$ be such that $\frac{1}{q}
= \frac{1}{p}-\frac{s}{d}$. Then, there exists a positive  scale invariant constant $c_s$ such that
\begin{equation}\label{sob1}
\|v\|_{L^{q'}(\Omega)} \le c_s |\Omega|^{\frac{s}{d}-\frac{1}{p}+\frac{1}{q'}}\| v\|_{W^{s,p}(\Omega)}, \qquad
\frac{1}{q'}
\ge \frac{1}{q}, \quad \forall v \in
W^{s,p}(\Omega).
\end{equation}
If $p>d/s$ the above relation is valid for $q'=\infty$.
A similar inequality holds for vector-valued functions.

We will denote by $\cal H$ and $V$  the Hilbert spaces
$
{\cal H}=\{ \bu \in \big(L^{2}(\Om))^d  \, |\, \mbox{div}(\bu)=0, \,
\bu\cdot n_{|_{\partial \Omega}}=0 \}$,
$V=\{ \bu \in \big(H^{1}_{0}(\Om))^d  \, | \, \mbox{div}(\bu)=0 \}$,
endowed with the inner product of $L^{2}(\Om)^{d}$ and $H^{1}_{0}(\Om)^{d},$
respectively.

The following interpolation inequalities will also be used~(see, e.g., \cite[formula~(6.7)]{Constantin-Foias} and~\cite[Exercise~II.2.9]{Galdi})
\begin{equation}
\label{eq:parti_ineq}
\left\|v\right\|_{L^{{2d}/{(d-1)}}(\Omega)} \le \textcolor{black}{c_{1}}\left\|v\right\|_0^{1/2}
\left\|  v\right\|_1^{1/2},\qquad \forall v\in H^1(\Omega),
\end{equation}
\textcolor{black}{(where, for simplicity, by enlarging the constants if necessary, we may take the constant~$c_1$ in~(\ref{eq:parti_ineq}) equal to~$c_s$ in~\eqref{sob1} for $s=1$)}
and~Agmon's inequality
\begin{equation}
\label{eq:agmon}
\left\|v \right\|_\infty\le c_{\mathrm{A}} \left\|v\right\|_{d-2}^{1/2} \left\|v\right\|_2^{1/2},\qquad d=2,3,
\qquad \forall v\in H^2(\Omega).
\end{equation}
The case $d=2$ is a direct consequence of~\cite[Theorem 3.9]{Agmon}.
For $d=3$, a proof for domains of class~$C^2$ can be found in~\cite[Lemma~4.10]{Constantin-Foias}.

We will make use of Poincar\'e's inequality,
\begin{equation}
\label{Poin}
\left\| v\right\|_0\le c_P|\Omega |^{1/d} \|\nabla v\|_0 ,\qquad \forall v\in H^1_0(\Omega),
\end{equation}
where the constant~$c_P$ can be taken $c_P\le \sqrt{2}/2$.  If we denote
by $
\hat c_P=1+c_P^2$,  then from~(\ref{Poin}) it follows that
\begin{equation}\label{Poin2}
\left\| v\right\|_1\le (\hat c_P)^{1/2} \|\nabla v\|_0,\qquad \forall v\in H^1_0(\Omega).
\end{equation}

The constants~$c_s$, $c_1$,
$c_A$ and~$c_P$ in the inequalities above are all scale-invariant. This will also be the case of all constants in the present paper unless explicitly stated otherwise.
We will also use the well-known
property, see \cite[Lemma 3.179]{Volker_libro}
\begin{equation}
\label{eq:div}
\left\| \nabla\cdot \bv\right\|_0\le \left\|\nabla \bv\right\|_0,\quad \bv \in
H^1_0(\Omega)^d.
\end{equation}

Let $\mathcal{T}_{h}=(\tau_j^h,\phi_{j}^{h})_{j \in J_{h}}$, $h>0$ be a family of partitions of suitable domains $\Omega_h$, where $h$ denotes the maximum diameter of the elements $\tau_j^h\in \mathcal{T}_{h}$, and $\phi_j^h$ are the mappings from the reference simplex $\tau_0$ onto $\tau_j^h$.
We shall assume that the partitions are shape-regular and quasi-uniform. Let $r \geq 2$, we consider the finite-element spaces
\begin{eqnarray*}
S_{h,r}&=&\left\{ \chi_{h} \in \mathcal{C}\left(\overline{\Om}_{h}\right) \,  \big|
\, {\chi_{h}}_{|{\tau_{j}^{h}}}
\circ \phi^{h}_{j} \, \in \, P^{r-1}(\tau_{0})  \right\} \subset H^{1}(\Om_{h}),
\nonumber\\
{S}_{h,r}^0&=& S_{h,r}\cap H^{1}_{0}(\Om_{h}),
\end{eqnarray*}
where $P^{r-1}(\tau_{0})$ denotes the space of polynomials of degree at most $r-1$ on $\tau_{0}$. For $r=1$,
$S_{h,1}$ stands for the space of piecewise constants.

When $\Omega$ has polygonal or polyhedral boundary $\Omega_h=\Omega$ and mappings~$\phi_j^h$ from the reference simplex are affine.
When $\Omega$ has a smooth boundary, and for the purpose of analysis, we will assume that $\Omega_h$ exactly matches $\Omega$, as it is done
for example in~\cite{chenSiam}, \cite{Schatz98}. At a price of a more involved
analysis,  though, the discrepancies between $\Omega_h$ and
$\Omega$ can also be included in the analysis (see, e.g., \cite{Ay_Gar_Nov}, \cite{Schatz-Whalbin}).

We shall denote by $(X_{h,r}, Q_{h,r-1})$ the MFE pair known as Hood--Taylor elements \cite{BF,hood0} when $r\ge 3$, where
\begin{eqnarray*}
X_{h,r}=\left({S}_{h,r}^0\right)^{d},\quad
Q_{h,r-1}=S_{h,r-1}\cap L^2(\Om_{h})/{\mathbb R},\quad r
\ge 3,
\end{eqnarray*}
and, when $r=2$, the MFE pair known as the mini-element~\cite{Brezzi-Fortin91} where $Q_{h,1}=S_{h,2}\cap L^2(\Om_{h})/{\mathbb R}$, and $X_{h,2}=({S}_{h,2}^0)^{d}\oplus{\mathbb B}_h$. Here, ${\mathbb B}_h$ is spanned by the bubble functions $\bbu_\tau$, $\tau\in\mathcal{T}_h$, defined by $\bbu_\tau(x)=(d+1)^{d+1}\lambda_1(x)\cdots
\lambda_{d+1}(x)$,  if~$x\in \tau$ and 0 elsewhere, where $\lambda_1(x),\ldots,\lambda_{d+1}(x)$ denote the barycentric coordinates of~$x$. All these elements
satisfy a uniform inf-sup condition (see \cite{BF}), that is, for a constant $\beta_{\rm is}>0$ independent of the mesh size $h$ the following inequality holds
\begin{equation}\label{lbbh}
 \inf_{q_{h}\in Q_{h,r-1}}\sup_{v_{h}\in X_{h,r}}
\frac{(q_{h},\nabla \cdot v_{h})}{\|v_{h}\|_{1}
\|q_{h}\|_{L^2/{\mathbb R}}} \geq \beta_{\rm{is}}.
\end{equation}

To approximate the velocity 
we consider the discrete divergence-free space
\begin{eqnarray*}
V_{h,r}=X_{h,r}\cap \left\{ \chi_{h} \in H^{1}_{0}(\Om_{h})^d \mid
(q_{h}, \nabla\cdot\chi_{h}) =0  \quad\forall q_{h} \in Q_{h,r-1}
\right\}.
\end{eqnarray*}
For each fixed time $t\in[0,T]$, notice that the solution $(u,p)$ of \eqref{NS} is also
the solution of a Stokes problem with right-hand side $\bff-\bu_t-(\bu\cdot\nabla )\bu$. We
will denote by $(\bs_h,q_h)\in(X_{h,r},Q_{h,r-1}),$ its  MFE approximation,
solution of
\begin{eqnarray}
\nu(\nabla \bs_h,\nabla \bvar_h)-(q_h,\nabla \cdot \bvar_h)
&=&\nu(\nabla u,\nabla \bvar_h)
-(p,\nabla\cdot \bvar_h)\nonumber\\
&=&(\bff-\bu_t-(\bu\cdot \nabla \bu),\bvar_h)\quad \forall
\bvar_h\in X_{h,r},
\label{stokesnew}\\
(\nabla \cdot \bs_h,\psi_h)&=&0 \quad \forall \psi_h\in Q_{h,r-1}.\nonumber
\end{eqnarray}
Notice that $\bs_h=S_{h}(\bu) : V \rightarrow V_{h,r}$ is the discrete Stokes
projection of the solution $(\bu,p)$ of \eqref{NS} (see \cite{heyran0}), and
it satisfies satisfies that
for all $\bvar_{h} \in V_{h,r}$,
\begin{eqnarray*}
 \nu (\nabla S_h(\bu) , \nabla \bvar_{h} )=\nu( \nabla \bu , \nabla \bvar_{h}) -
 (p, \nabla \cdot \bvar_{h})=(\bff-\bu_{t}-(\bu\cdot \nabla )\bu , \bvar_{h}).
\end{eqnarray*}
The following bound holds (see e.g., \cite{heyran2}):
\begin{equation}
\|\bu-\bs_h\|_0+h\|\bu-\bs_h\|_1\le CN_j(\bu,p) h^j,\qquad
1\le j\le r,
\label{stokespro}
\end{equation}
where here and in the sequel, for $\bv\in V\cap H^j(\Omega)^d$ and~$q\in L^2_0(\Omega)\cap H^{j-1}(\Omega)$ we denote
\begin{equation}
\label{eq:N_j}
 N_j(\bv,q) = \|\bv\|_j + \nu^{-1} \|q\|_{H^{j-1}/{\mathbb R}}, \qquad j\ge 1,
\end{equation}
and, when $\bv$ and~$q$ depend on~$t$
\begin{equation}
\label{eq:barraN}
\overline{N}_j(\bv,q)=\sup_{\tau\ge 0}N_r(\bv(\tau),q(\tau)),\
M_j(\bv)=\sup_{\tau\ge 0} \| \bv(\tau)\|_j,\ M_j(p)=\sup_{\tau\ge 0} \| p(\tau)\|_{H^{j}/{\mathbb R}}.
\end{equation}
Assuming that $\Omega$ is of class ${\cal C}^m$,
with $m \ge 3$, and using
standard duality arguments and~\eqref{stokespro}, one obtains
\begin{equation}\label{eq:stokes_menos1}
\| \bu-\bs_{h}\|_{-s}  \leq CN_r(\bu,p) h^{r+s} ,
\qquad 0 \leq s\leq \min(r-2, 1).
\end{equation}
We also have the following bounds~\cite[Lemma~3.7]{Ours}
\begin{align}
\label{cota_sh_inf}
\|\bs_h\|_\infty  & \le D_0\left((\|\bu\|_{d-2}\|\bu\|_2)^{1/2} +\bigl(N_1(\bu,p)N_{d-1}(\bu,p)\bigr)^{1/2}\right)
\\
\label{la_cota}
\|\nabla\bs_h\|_{L^{2d/(d-1)}} & \le
 D_0\bigl(N_1(\bu,p)N_2(\bu,p)\bigr)^{1/2},
\end{align}
where the constant~$D_0$ does not depend on~$\nu$.
And, arguing as in~\cite[Lemma~3.7]{Ours} one can also prove
\begin{align}
\label{cota_sh_inf_1}
\|\nabla \bs_h\|_\infty  & \le D_0\left((\|\bu\|_{d-1}\|\bu\|_3)^{1/2} +\bigl(N_2(\bu,p)N_{d}(\bu,p)\bigr)^{1/2}\right).
\end{align}
We also consider a modified Stokes projection that was introduced in \cite{grad-div1} and that we denote by $\bs_h^m:V\rightarrow V_{h,r}$
satisfying
\begin{eqnarray}\label{stokespro_mod_def}
\nu(\nabla \bs_h^m,\nabla \bvar_h)=(\bff-\bu_{t}-(\bu\cdot \nabla )\bu-\nabla p , \bvar_{h}), \quad \forall \, \,
 \bvar_{h} \in V_{h,r},
\end{eqnarray}
and the following error bound, see \cite{grad-div1}:
\begin{equation}
\|\bu-\bs_h^m\|_0+h\|\bu-\bs_h^m\|_1\le C\|\bu\|_j h^j,\qquad
1\le j\le r.
\label{stokespro_mod}
\end{equation}
From \cite{chenSiam}, we also have
\begin{align}
\|\nabla (\bu-\bs_h^m)\|_\infty\le C\|\nabla \bu\|_\infty \label{cotainfty1},
\end{align}
where $C$ does not depend on $\nu$.
Also we have the following bounds~\cite[Lemma~3.8]{Ours}
\begin{align}
\label{cota_sh_inf_mu}
\|\bs_h^m\|_\infty  & \le D_1(\|\bu\|_{d-2}\|\bu\|_2)^{1/2},
\\
\label{la_cota_mu}
\|\nabla\bs_h^m\|_{L^{2d/(d-1)}} & \le
 D_1\bigl(\|\bu\|_1\|\bu\|_2\bigr)^{1/2},
\end{align}
where the constant~$D_1$ is independent of~$\nu$.

We will denote by $\pi_h p$ the $L^2$ projection of the pressure $p$ onto $Q_{h,r-1}$. It is well-known that
\begin{equation}\label{eq:L2p}
\|p-\pi_h p\|_0\le C h^{j-1}\|p\|_{H^{j-1}/{\mathbb R}},\qquad 1\le j\le r.
\end{equation}



We will assume that the interpolation operator $I_H$ is stable in $L^2$, that is,
\begin{eqnarray}\label{eq:L^2inter}
\|I_H \bu\|_0\le c_0\|\bu\|_0,\quad \forall \bu\in L^2(\Omega)^d,
\end{eqnarray}
and that it satisfies the following approximation property,
\begin{eqnarray}\label{eq:cotainter}
\|\bu-I_H\bu\|_0\le c_I H\|\nabla \bu\|_0,\quad \forall \bu\in H_0^1(\Omega)^d.
\end{eqnarray}
The Bernardi--Girault~\cite{Ber_Gir}, Girault--Lions~\cite{Girault-Lions-2001}, or the Scott--Zhang~\cite{Scott-Z} interpolation operators
satisfy
\eqref{eq:cotainter} and~\eqref{eq:L^2inter}. Notice that the interpolation can be
on
piecewise constants.

Finally, we will denote by $I_h^{La} \bu\in X_{h,r}$  the Lagrange interpolant of a continuous function $\bu$. In Subsection~\ref{sub_la}
we consider the case in which $I_H=I_H^{La}$ for which bounds \eqref{eq:L^2inter} and \eqref{eq:cotainter} do not hold.

\section{Fully discrete schemes}
\label{Se:main}
We consider the following method to approximate \eqref{eq:mod_NS}. For $n\ge 1$  we define $(\bu_h^{(n)},p_h^{(n)})\in X_{h,r}\times Q_{h,r-1}$ satisfying
for all $(\bvar_h,\psi_h)\in X_{h,r}\times Q_{h,r-1}$
\begin{align}\label{eq:method}
(d_t\bu_h^{(n)},\bvar_h)
+\nu(\nabla \bu_h^{(n)},&\nabla \bvar_h)+b_h(\bu_h^{(n)},\bu_h^{(n)},\bvar_h)+\mu(\nabla \cdot \bu_h^{(n)},\nabla \cdot \bvar_h)
\nonumber\\
+(\nabla p_h^{(n)},\bvar_h)&=(\bff^{(n)},\bvar_h)
-{\beta(I_H(\bu_h^{(n)})-I_H(\bu^{(n)}),I_H\bvar_h)},\nonumber\\
(\nabla \cdot \bu_h^{(n)},\psi_h)&=0.
\end{align}
In \eqref{eq:method} $d_t\bu_h^{(n)}$ is a finite difference approximation to the time derivative at time $t_n=n\Delta t$, where $\Delta t$
is the time step. Also, $\mu$ is a stabilization parameter that can be zero in case we do not stabilize the divergence or different from zero in case we add grad-div stabilization and $b_h(\cdot,\cdot,\cdot)$  is defined in the following way
$$
b_{h}(\bu_{h},\bv_{h},\bvar_{h}) =((\bu_{h}\cdot \nabla ) \bv_{h}, \bvar_{h})+ \frac{1}{2}( \nabla \cdot (\bu_{h})\bv_{h},\bvar_{h}),
\quad \, \forall \, \bu_{h}, \bv_{h}, \bvar_{h} \in X_{h,r}.
$$
It is straightforward to verify that $b_h$ enjoys the skew-symmetry property
\begin{equation}\label{skew}
b_h(\bu,\bv,\bw)=-b_h(\bu,\bw,\bv) \qquad \forall \, \bu, \bv, \bw\in H_0^1(\Omega)^d.
\end{equation}
Let us observe that taking $\bvar_h\in V_{h,r}$ from \eqref{eq:method} we get
\begin{align}\label{eq:method2}
(d_t\bu_h^{(n)},\bvar_h)
+\nu(\nabla \bu_h^{(n)},\nabla &\bvar_h)+b_h(\bu_h^{(n)},\bu_h^{(n)},\bvar_h)+\mu(\nabla \cdot \bu_h^{(n)},\nabla \cdot\bvar_h)={}\nonumber\\
&(\bff^{(n)},\bvar_h)-\beta(I_H(\bu_h^{(n)})-I_H(\bu^{(n)}),I_H\bvar_h).
\end{align}

For the analysis below, we  introduce the values~$\overline\mu$ and
$\overline k$, defined as follows
\begin{equation}
\label{eq:mubarra}
\overline \mu=\left\{\begin{array}{lcl} 0,&\quad& \hbox{\rm if~$\mu=0$},\\ 1,&& \hbox{\rm otherwise},\end{array}\right.
\qquad\qquad
\overline k=\left\{\begin{array}{lcl} 0,&\quad& \hbox{\rm if~$\mu=0$},\\ 1/\mu,&& \hbox{\rm otherwise}.\end{array}\right.
\end{equation}

\begin{lema}\label{le:nonlinb} The following bound holds for $\bv_h,\hat \bv_h,\bw_h,
\hat  \bw_h\in V_h$, and
$\varepsilon,\delta>0$,
\begin{align*}
|b_h(\hat  \bv_h,\bv_h,\be_h)
-b_h(\hat  \bw_h,\bw_h,\be_h)| \le& \frac{1}{\delta}\hat  L(\bw_h,\varepsilon)\|\be_h\|_0^2\nonumber\\
&{}+
\delta\left(\frac{\|\nabla\bw\|_{\infty}}{2}\|\hat  \be_h\|_0^2+\frac{\varepsilon}{4}\|\nabla\cdot\hat  \be_h\|_0^2\right),
\end{align*}
where $\be_h=\bv_h-\bw_h$, $\hat  \be_h=\hat  \bv_h-\hat  \bw_h$ and
\begin{equation}
\label{eq:L1b}
\hat   L(\bw_h,\varepsilon)=\left(\frac{\|\nabla \bw_h\|_\infty}{2}+ \frac{\|\bw_h\|_\infty^2}{4\varepsilon}\right),
\end{equation}
\end{lema}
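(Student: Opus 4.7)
The natural starting point is to split the difference into a term in which only the first slot changes and a term in which only the second slot changes, by adding and subtracting $b_h(\hat\bv_h,\bw_h,\be_h)$:
\begin{align*}
b_h(\hat\bv_h,\bv_h,\be_h)-b_h(\hat\bw_h,\bw_h,\be_h)
 = b_h(\hat\bv_h,\be_h,\be_h)+b_h(\hat\be_h,\bw_h,\be_h).
\end{align*}
By the skew\hyp{}symmetry property~\eqref{skew}, the first term on the right vanishes, so the whole difference reduces to $b_h(\hat\be_h,\bw_h,\be_h)$. This is exactly the reduction that lets the bound depend on $\bw_h$ (not on $\bv_h$ or $\hat\bv_h$), which is what the statement requires.

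Next, I would expand the remaining term using the definition of $b_h$, namely
\begin{equation*}
b_h(\hat\be_h,\bw_h,\be_h)=((\hat\be_h\cdot\nabla)\bw_h,\be_h)+\tfrac{1}{2}(\nabla\cdot\hat\be_h\,\bw_h,\be_h),
\end{equation*}
and estimate each summand by H\"older's inequality, pulling $\|\nabla\bw_h\|_\infty$ and $\|\bw_h\|_\infty$ out in $L^\infty$ and leaving the remaining two factors in $L^2$:
\begin{equation*}
|b_h(\hat\be_h,\bw_h,\be_h)|\le \|\nabla\bw_h\|_\infty\|\hat\be_h\|_0\|\be_h\|_0+\tfrac{1}{2}\|\bw_h\|_\infty\|\nabla\cdot\hat\be_h\|_0\|\be_h\|_0.
\end{equation*}

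Finally, I would apply Young's inequality to each of the two products. For the first product, use weights $\delta$ and $1/\delta$ so as to split as $\tfrac{1}{2\delta}\|\nabla\bw_h\|_\infty\|\be_h\|_0^2+\tfrac{\delta}{2}\|\nabla\bw_h\|_\infty\|\hat\be_h\|_0^2$. For the second, use weights designed to absorb the $\varepsilon$ factor, namely $\tfrac{1}{2}\|\bw_h\|_\infty\|\nabla\cdot\hat\be_h\|_0\|\be_h\|_0\le \tfrac{\delta\varepsilon}{4}\|\nabla\cdot\hat\be_h\|_0^2+\tfrac{\|\bw_h\|_\infty^2}{4\delta\varepsilon}\|\be_h\|_0^2$. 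Collecting the $\|\be_h\|_0^2$ contributions yields the prefactor $\tfrac{1}{\delta}\hat L(\bw_h,\varepsilon)$ with $\hat L$ as in~\eqref{eq:L1b}, while the $\|\hat\be_h\|_0^2$ and $\|\nabla\cdot\hat\be_h\|_0^2$ contributions assemble into $\delta\bigl(\tfrac{\|\nabla\bw_h\|_\infty}{2}\|\hat\be_h\|_0^2+\tfrac{\varepsilon}{4}\|\nabla\cdot\hat\be_h\|_0^2\bigr)$, matching the claimed inequality exactly.

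The proof is essentially routine; there is no real obstacle beyond choosing the Young splitting that matches the prescribed form of $\hat L$ and the $\varepsilon$\hyp{}weighted divergence term, which reveals that the statement is designed with later applications in mind (where $\varepsilon$ will be tuned against the grad\hyp{}div coefficient $\mu$ and $\delta$ against the nudging parameter $\beta$).
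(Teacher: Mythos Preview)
Your proof is correct and follows essentially the same route as the paper's: add and subtract $b_h(\hat\bv_h,\bw_h,\be_h)$, use skew-symmetry to kill $b_h(\hat\bv_h,\be_h,\be_h)$, bound $|b_h(\hat\be_h,\bw_h,\be_h)|$ via H\"older with $L^\infty$ on $\nabla\bw_h$ and $\bw_h$, and then apply Young's inequality with the weights you indicate. The choices of Young parameters and the final collection of terms coincide with the paper's argument.
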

\begin{proof}
Adding $\pm b_h(\hat  \bv_h,\bw_h,\be_h)$ and applying the
skew-symmetry property \eqref{skew} we have
\begin{align*}
|b_h(\hat  \bv_h,\bv_h,\be_h)
-b_h(\hat  \bw_h,\bw_h&,\be_h)|=|b_h(\hat  \be_h,\bw_h,\be_h)|\le \|\nabla \bw_h\|_{\infty}\|\hat  \be_h\|_0\|\be_h\|_0\\
{}+\frac{1}{2}\|\nabla \cdot \hat  \be_h\|_0\|\bw_h\|_\infty\|\be_h\|_0&\le
\frac{ \|\nabla \bw_h\|_{\infty}}{2\delta }\|\be_h\|_0^2+\delta \frac{ \|\nabla \bw_h\|_{\infty}}{2 }\|\hat  \be_h\|_0^2 \\
&\quad{}+\frac{\|\bw_h\|_\infty^2}{4\delta\epsilon}\|\be_h\|_0^2+\frac{\delta\epsilon}{4}\|\nabla \cdot \hat \be_h\|_0^2.
\nonumber
\end{align*}
\end{proof}

The proof of the following lemma is similar to the proof of \cite[Lemma 3.1]{Ours}.
\begin{lema}\label{lema_general}
Let $(\bu_h^{(n)})_{n=0}^\infty$ be the finite element approximation defined in
\eqref{eq:method2} and let $(\bw_h^{(n)})_{n=0}^\infty$, $(\btau_h^{(n)})_{n=0}^\infty$, $(\theta_h^{(n)})_{n=1}^\infty$ in  $V_{h,r}$ be sequences satisfying
\begin{align}\label{eq:wh}
(d_t \bw_h^{(n)},\bvar_h)+\nu(\nabla \bw_h^{(n)},&\nabla \bvar_h)+b_h(\bw_h^{(n)},\bw_h^{(n)},\bvar_h)+\mu(\nabla \cdot \bw_h^{(n)},\nabla \cdot \bvar_h)
={}\nonumber\\
&(\bff^{(n)},\bvar_h)+(\btau_h^{(n)},\bvar_h)+\overline \mu (\theta_h^{(n)},\nabla \cdot \bvar_h),
\end{align}
Assume that the quantity~$L$
defined
in~\eqref{eq:L_muno0}, below,
 is bounded. Then, if $\beta\ge 8L$ and $H$ satisfies condition~\eqref{eq:as2}, below, the following bounds hold for~$\be_h^{(n)}=\bu_h^{(n)}-\bw_h^{(n)}$,
 \begin{eqnarray}\label{eq:muboth_fully}
(d_t\be_h^{(n)},\be_h^{(n)})+\frac{\gamma}{2} \|\be_h^{(n)}\|_0^2
+\frac{\mu}{2}\|\nabla \cdot \be_h^{(n)}\|_0^2
\le \overline k \|\theta_h^{(n)}\|_0^2+\frac{\beta }{2}c_0^2\|\bu^{(n)}-\bw_h^{(n)}\|_0^2
 \nonumber \\
 +\left((1-\overline \mu)\frac{\hat c_P}{\nu}+\frac{{\overline \mu}}{2L}\right)\|\btau_h^{(n)}\|_{-1+\overline \mu}^2,
\end{eqnarray}
where, $\overline\mu$ and~$\overline k$ are defined in~\eqref{eq:mubarra}, and  $\gamma$ is defined in \eqref{eq:gamma} below.
\end{lema}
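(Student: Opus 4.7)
The plan is to subtract equation \eqref{eq:wh} from \eqref{eq:method2}---which $\bu_h^{(n)}$ satisfies and which is valid for $\bvar_h\in V_{h,r}$, and in particular for $\bvar_h=\be_h^{(n)}=\bu_h^{(n)}-\bw_h^{(n)}$. Testing the resulting error equation with $\bvar_h=\be_h^{(n)}$ produces
\begin{align*}
(d_t\be_h^{(n)},\be_h^{(n)})&+\nu\|\nabla\be_h^{(n)}\|_0^2+\mu\|\nabla\cdot\be_h^{(n)}\|_0^2+\bigl[b_h(\bu_h^{(n)},\bu_h^{(n)},\be_h^{(n)})-b_h(\bw_h^{(n)},\bw_h^{(n)},\be_h^{(n)})\bigr]\\
&{}=-\beta\bigl(I_H\bu_h^{(n)}-I_H\bu^{(n)},I_H\be_h^{(n)}\bigr)-(\btau_h^{(n)},\be_h^{(n)})-\overline\mu(\theta_h^{(n)},\nabla\cdot\be_h^{(n)}).
\end{align*}
The argument is pointwise in $n$, so $d_t$ can be left abstract; the task is to bound the right-hand side so that its $\|\be_h^{(n)}\|_0^2$, $\|\nabla\be_h^{(n)}\|_0^2$ and $\|\nabla\cdot\be_h^{(n)}\|_0^2$ parts are absorbed, respectively, into a negative-definite nudging piece produced below, into $\nu\|\nabla\be_h^{(n)}\|_0^2$, and into a fraction of $\mu\|\nabla\cdot\be_h^{(n)}\|_0^2$.

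The nonlinear difference would be estimated by Lemma~\ref{le:nonlinb} with $\hat\bv_h=\bv_h=\bu_h^{(n)}$ and $\hat\bw_h=\bw_h=\bw_h^{(n)}$ (so $\hat\be_h=\be_h^{(n)}$) and $\delta=1$, yielding $\bigl(\hat L(\bw_h^{(n)},\varepsilon)+\|\nabla\bw_h^{(n)}\|_\infty/2\bigr)\|\be_h^{(n)}\|_0^2+(\varepsilon/4)\|\nabla\cdot\be_h^{(n)}\|_0^2$. For the nudging pairing I would decompose $I_H\bu_h^{(n)}-I_H\bu^{(n)}=I_H\be_h^{(n)}-I_H(\bu^{(n)}-\bw_h^{(n)})$ and apply Cauchy--Schwarz, Young with weight $1/2$, and the $L^2$ stability \eqref{eq:L^2inter} to obtain the good term $-\tfrac{\beta}{2}\|I_H\be_h^{(n)}\|_0^2$ plus $\tfrac{\beta}{2}c_0^2\|\bu^{(n)}-\bw_h^{(n)}\|_0^2$. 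The approximation property \eqref{eq:cotainter} then gives $\|\be_h^{(n)}\|_0^2\le 2\|I_H\be_h^{(n)}\|_0^2+2c_I^2H^2\|\nabla\be_h^{(n)}\|_0^2$, so $-\tfrac{\beta}{2}\|I_H\be_h^{(n)}\|_0^2\le -\tfrac{\beta}{4}\|\be_h^{(n)}\|_0^2+\tfrac{\beta c_I^2H^2}{2}\|\nabla\be_h^{(n)}\|_0^2$; the hypothesis \eqref{eq:as2} on $H$ is precisely calibrated so this last gradient contribution is swallowed by a portion of $\nu\|\nabla\be_h^{(n)}\|_0^2$.

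The source pairings $(\btau_h^{(n)},\be_h^{(n)})$ and $\overline\mu(\theta_h^{(n)},\nabla\cdot\be_h^{(n)})$ are then handled by cases. If $\mu=0$, I would bound $|(\btau_h^{(n)},\be_h^{(n)})|\le\|\btau_h^{(n)}\|_{-1}\|\be_h^{(n)}\|_1$ and combine Young with \eqref{Poin2} to absorb the gradient piece into the remaining share of $\nu\|\nabla\be_h^{(n)}\|_0^2$, reconstructing the factor $\hat c_P/\nu$; the residual $(\varepsilon/4)\|\nabla\cdot\be_h^{(n)}\|_0^2$ from the nonlinear bound is absorbed via \eqref{eq:div} with $\varepsilon$ chosen of order $\nu$. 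If $\mu>0$, Cauchy--Schwarz and Young with weight $L$ on $(\btau_h^{(n)},\be_h^{(n)})$ produce $(1/(2L))\|\btau_h^{(n)}\|_0^2+(L/2)\|\be_h^{(n)}\|_0^2$, and Young with weight of order $\mu$ on the $\theta_h^{(n)}$ pairing gives $(1/\mu)\|\theta_h^{(n)}\|_0^2$ plus a share of $\mu\|\nabla\cdot\be_h^{(n)}\|_0^2$ which, combined with the nonlinear divergence residual at $\varepsilon\sim\mu$, leaves exactly $(\mu/2)\|\nabla\cdot\be_h^{(n)}\|_0^2$ on the left-hand side.

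The principal obstacle is bookkeeping: the constant $L$ appearing in \eqref{eq:L_muno0} must be chosen as a uniform-in-$n$ upper bound for all $\|\be_h^{(n)}\|_0^2$ coefficients generated above---namely $\hat L(\bw_h^{(n)},\varepsilon)+\|\nabla\bw_h^{(n)}\|_\infty/2$, and, in the grad-div case, the additional $L/2$ from the $\btau_h^{(n)}$ step. Under the assumption $\beta\ge 8L$ the $-\beta/4$ coefficient from the nudging beats these $L$-sized contributions with a fixed margin, leaving a positive coefficient $\gamma/2$ in front of $\|\be_h^{(n)}\|_0^2$ on the left-hand side, with $\gamma$ matching the definition \eqref{eq:gamma}. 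This is the fully discrete counterpart of \cite[Lemma~3.1]{Ours}; the distinctive features are the abstract time-difference $d_t\be_h^{(n)}$ (left untouched throughout) and the uniform $\overline\mu$/$\overline k$ handling of the Galerkin ($\mu=0$) and grad-div-stabilized ($\mu>0$) cases.
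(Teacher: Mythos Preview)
Your overall plan is right and most of the bookkeeping (nonlinear term via Lemma~\ref{le:nonlinb}, the two cases for the source pairings, the decomposition of the nudging term) matches the paper. But there is a genuine gap in the step where you convert the nudging contribution.

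You write
\[
-\tfrac{\beta}{2}\|I_H\be_h^{(n)}\|_0^2\le -\tfrac{\beta}{4}\|\be_h^{(n)}\|_0^2+\tfrac{\beta c_I^2H^2}{2}\|\nabla\be_h^{(n)}\|_0^2
\]
and claim that condition~\eqref{eq:as2} is ``precisely calibrated'' to absorb the last term into~$\nu\|\nabla\be_h^{(n)}\|_0^2$. It is not. Condition~\eqref{eq:as2} only says $c_I^2H^2\le \nu/(8L)$, so your gradient coefficient is at most $\beta\nu/(16L)$, which is unbounded because the lemma imposes only the \emph{lower} bound $\beta\ge 8L$ and (this is one of the main points of the paper) no upper bound on~$\beta$. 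With your ordering the argument collapses as soon as $\beta>16L$, and you also never recover the $\nu/(4c_I^2H^2)$ branch in the definition~\eqref{eq:gamma} of~$\gamma$.

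The fix, which is exactly what the paper does, is to reverse the roles: keep $\tfrac{\beta}{2}\|I_H\be_h^{(n)}\|_0^2$ intact on the left and instead split the $L\|\be_h^{(n)}\|_0^2$ produced by the nonlinear and source estimates as
\[
L\|\be_h^{(n)}\|_0^2\le 2L\|I_H\be_h^{(n)}\|_0^2+2L\|(I-I_H)\be_h^{(n)}\|_0^2.
\]
The first piece is swallowed by $\tfrac{\beta}{2}\|I_H\be_h^{(n)}\|_0^2$ using $\beta\ge 8L$, leaving $\tfrac{\beta}{4}\|I_H\be_h^{(n)}\|_0^2$. The second piece, via~\eqref{eq:cotainter}, carries the gradient coefficient $2Lc_I^2H^2$, which is $\le \nu/4$ by~\eqref{eq:as2} \emph{independently of~$\beta$}. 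One further application of~\eqref{eq:cotainter} to the remaining $\tfrac{\nu}{4}\|\nabla\be_h^{(n)}\|_0^2+\tfrac{\beta}{4}\|I_H\be_h^{(n)}\|_0^2$ then yields~$\tfrac{\gamma}{2}\|\be_h^{(n)}\|_0^2$ with $\gamma$ exactly as in~\eqref{eq:gamma}.
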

\begin{proof} Subtracting \eqref{eq:wh} from \eqref{eq:method2} we get the error equation
\begin{align}\label{eq:error1}
&(d_t\be_h^{(n)},\bvar_h)+\nu(\nabla\be_h^{(n)},\nabla\bvar_h)+\beta(I_H \be_h^{(n)},I_H\bvar_h)\\
&{}+b_h(\bu_h^{(n)},\bu_h^{(n)},\bvar_h)
-b_h(\bw_h^{(n)},\bw_h^{(n)},\bvar_h)+\mu(\nabla \cdot \be_h^{(n)},\nabla \cdot \bvar_h)=\nonumber\\
&
\beta(I_H \bu^{(n)}-I_H \bw_h^{(n)},I_H \bvar_h)+(\btau_h^{(n)},\bvar_h)+\overline \mu(\theta_h^{(n)},\nabla \cdot \bvar_h),
\nonumber
\end{align}
for all $\bvar_h\in V_{h,r}$
Taking $\bvar_h=\be_h^{(n)}$ in \eqref{eq:error1} we get
\begin{align}\label{eq:error2}
(d_t\be_h^{(n)},\be_h^{(n)})+\nu\|\nabla \be_h^{(n)}\|_0^2+\beta \|I_H\be_h^{(n)}\|_0^2+&\mu\|\nabla \cdot \be_h^{(n)}\|_0^2
\\
\le |b_h(\bu_h^{(n)},\bu_h^{(n)},\be_h^{(n)}-b_h(\bw_h^{(n)},\bw_h^{(n)},\be_h&^{(n)})|
 +\beta|(I_H \bu^{(n)}-I_H \bw_h^{(n)},I_H\be_h^{(n)})|\nonumber\\
&\quad+|(\btau_h^{(n)},\be_h^{(n)})|+|\overline \mu(\theta_h^{(n)},\nabla \cdot \be_h^{(n)})|.\nonumber
\end{align}
We will bound the terms on the right-hand side of \eqref{eq:error2}. For the nonlinear term and the truncation errors
we argue differently  depending on whether $\mu=0$ or $\mu>0$.

If $\mu=0$, applying Lemma~\ref{le:nonlinb} with $\hat\bv_h=\bv_h=\bu_h^{(n)}$,
$\hat\bw_h=\bw_h=\bw_h^{(n)}$, $\epsilon=\nu$ and~$\delta=1$,  we have
\begin{align}\label{eq:error3}
&|b_h(\bu_h^{(n)},\bu_h^{(n)},\be_h^{(n)})
-b_h(\bw_h^{(n)},\bw_h^{(n)},\be_h^{(n)})| \nonumber\\
&\qquad {}\le \left(\hat L(\bw_h^{(n)},\nu)+\frac{1}{2}\|\nabla\bw_h^{(n)}\|_{\infty}\right)
\|\be_h^{(n)}\|_0^2+\frac{\nu}{4}\|\nabla\cdot\be_h^{(n)}\|_0^2,
\nonumber\\
&\qquad {}\le\left(\hat L(\bw_h^{(n)},\nu)+\frac{1}{2}\|\nabla\bw_h^{(n)}\|_{\infty}\right)
\|\be_h^{(n)}\|_0^2+\frac{\nu}{4}\|\nabla \be_h^{(n)}\|_0^2
\end{align}
where in the last inequality we have applied~\eqref{eq:div}.
For the term $|(\btau_h^{(n)},\be_h^{(n)})|$ when $\mu=0$, using \eqref{Poin2} we get
\begin{align}\label{eq:error6}
|(\btau_h^{(n)},\be_h^{(n)})|&\le \|\btau_h^{(n)}\|_{-1}\|\be_h^{(n)}\|_1\le \textcolor{black}{(\hat c_P)^{1/2}}\|\btau_h^{(n)}\|_{-1}\|\nabla \be_h^{(n)}\|_0
\nonumber\\
&{}\le \frac{\textcolor{black}{\hat c_P}}{\nu}\|\btau_h^{(n)}\|_{-1}^2+\frac{\nu}{4}\|\nabla \be_h^{(n)}\|_0^2.
\end{align}

When $\mu> 0$, we bound the nonlinear term by applying
Lemma~\ref{le:nonlinb} with $\hat\bv_h=\bv_h=\bu_h^{(n)}$,
$\hat\bw_h=\bw_h=\bw_h^{(n)}$, $\epsilon=\mu$ and~$\delta=1$, that is
\begin{align}\label{eq:error3_muno0}
&|b_h(\bu_h^{(n)},\bu_h^{(n)},\be_h^{(n)})
-b_h(\bw_h^{(n)},\bw_h^{(n)},\be_h^{(n)})|\nonumber\\
&\qquad{}\le \left(\hat L(\bw_h^{(n)},\mu)+\frac{1}{2}\|\nabla\bw_h^{(n)}\|_{\infty}\right)\|\be_h^{(n)}\|_0^2+\frac{\mu}{4}\|\nabla\cdot\be_h^{(n)}\|_0^2,
\end{align}
In the sequel we denote
\begin{eqnarray}\label{eq:L}
L&=&\max_{n \ge 0}\left(\hat L(\bw_h^{(n)},\nu)+\frac{1}{2}\|\nabla\bw_h^{(n)}\|_{\infty}\right)\quad {\rm if}\quad \mu=0,
\\
\label{eq:L_muno0}
L&=&\max_{n\ge 0}\left(2\hat L(\bw_h^{(n)},\mu)+\|\nabla\bw_h^{(n)}\|_{\infty}\right) ,\hfill\quad {\rm if}\quad \mu> 0,
\end{eqnarray}
Observe that in the case~$\mu=0$, the left-hand side of~(\ref{eq:error3}) can be bounded by $L\|\be_h^{(n)}\|_0^2+(\nu/4)\|\nabla\be_h^{(n)}\|_0^2$, and, in the case~$\mu>0$ the left-hand side of~(\ref{eq:error3_muno0}) is bounded by~$(L/2)\|\be_h^{(n)}\|_0^2
+(\mu/4)\|\nabla\cdot\be_h\|_0^2$.

Next, we bound the last two terms of the right-hand side
of~\eqref{eq:error1} when $\mu>0$. We have
\begin{align}\label{eq:error6_muno0}
|(\btau_h^{(n)},\be_h^{(n)})|+|\overline \mu(\theta_h^{(n)},\nabla \cdot \be_h^{(n)})|\le &\frac{1}{2L}\|\btau_h^{(n)}\|_0^2+\frac{L}{2}\|\be_h^{(n)}\|_0^2
\nonumber\\
&{}
+\overline k \|\theta_h^{(n)}\|_0^2+ \frac{\mu}{4}\|\nabla \cdot \be_h^{(n)}\|_0^2,
\end{align}
where $\overline k$ is defined in \eqref{eq:mubarra}.

For the second term on the right-hand side of \eqref{eq:error2} applying \eqref{eq:L^2inter} we get
\begin{eqnarray}\label{eq:error5}
\beta|(I_H \bu^{(n)}-I_H \bw_h^{(n)},I_H\be_h)|&\le &\beta c_0\|\bu^{(n)}-\bw_h^{(n)}\|_0\|I_H\be_h^{(n)}\|_0
\nonumber \\
&\le& \frac{\beta}{2} c_0^2\|\bu^{(n)}-\bw_h^{(n)}\|_0^2
+\frac{\beta}{2}\|I_H\be_h^{(n)}\|_0^2.
\end{eqnarray}
Inserting \eqref{eq:error3},
\eqref{eq:error6}, \eqref{eq:error3_muno0}, \eqref{eq:error6_muno0} and \eqref{eq:error5} into \eqref{eq:error2} we get
\begin{align}\label{eq:otra}
&(d_t\be_h^{(n)},\be_h^{(n)})+(1+\overline  \mu)\frac{\nu}{2}\|\nabla \be_h^{(n)}\|_0^2+\frac{\beta}{2} \|I_H \be_h^{(n)}\|_0^2
+\frac{\mu}{2}\|\nabla \cdot \be_h^{(n)}\|_0^2
\\ &{}\le L\|\be_h\|_0^2+\overline k \|\theta_h^{(n)}\|_0^2+\frac{\beta}{2} c_0^2\|\bu^{(n)}-\bw_h^{(n)}\|_0^2+\left((1-\overline \mu)\frac{\hat c_P}{\nu}+\frac{\overline \mu}{2L}\right)\|\btau_h^{(n)}\|_{-1+\overline \mu}^2.
\nonumber
\end{align}
Now we bound
\begin{eqnarray*}
L\|\be_h^{(n)}\|_0^2\le 2L \|I_H e_h^{(n)}\|_0^2+2L \|(I-I_H)e_h^{(n)}\|_0^2.
\end{eqnarray*}
Assuming that $\beta\ge 8L$ and taking into account that $1+\overline \mu\ge 1$ we get
\begin{align}
\label{eq:aver}
&(d_t\be_h^{(n)},\be_h^{(n)})+\frac{\nu}{2}\|\nabla \be_h^{(n)}\|_0^2
-2L\|(I-I_H)\be_h^{(n)}\|_0^2+\frac{\beta}{4} \|I_H \be_h^{(n)}\|_0^2
+\frac{\mu}{2}\|\nabla \cdot \be_h^{(n)}\|_0^2
\nonumber\\
& {}\le \overline k \|\theta_h^{(n)}\|_0^2+\frac{\beta }{2}c_0^2\|\bu^{(n)}-\bw_h^{(n)}\|_0^2+\left((1-\overline \mu)\frac{\hat c_P}{\nu}+\frac{{\overline \mu}}{2L}\right)\|\btau_h^{(n)}\|_{-1+\overline \mu}^2.
\end{align}
For the rest of the proof we argue exactly as in the proof of \cite[Lemma 3.1]{Ours}. For the third term on the left-hand side above applying \eqref{eq:cotainter}  and assuming
\begin{equation}
\label{eq:as2}
H\le \frac{\nu^{1/2}}{(8L)^{1/2}c_I}
\end{equation}
we get $
-2L\|(I-I_H)e_h^{(n)}\|_0^2 \ge -\frac{\nu}4\|\nabla e_h^{(n)}\|_0^2$.
Therefore, for the last three terms on the left-hand side of~(\ref{eq:aver}) we have
\begin{equation}\label{eq:aver2}
\frac{\nu}2\|\nabla \be_h^{(n)}\|_0^2+\frac{\beta}{4} \|I_H \be_h^{(n)}\|_0^2
-2L\|(I-I_H)\be_h^{(n)}\|_0^2\ge \frac{\nu}{4}\|\nabla \be_h^{(n)}\|_0^2+\frac{\beta}{4} \|I_H \be_h^{(n)}\|_0^2.
\end{equation}
Now, applying \eqref{eq:cotainter} again to bound  the right-hand side above we have that
\begin{eqnarray}\label{eq:apply(21)b}
 \frac{\nu}{4}\|\nabla \be_h^{(n)}\|_0^2+\frac{\beta}{4} \|I_H \be_h^{(n)}\|_0^2
 \ge  {\gamma} (\|I_H\be_h^{(n)}\|_0^2+\|(I-I_H)\be_h^{(n)}\|_0^2),
\end{eqnarray}
where
\begin{equation}
\label{eq:gamma}
\gamma=\min\left\{\frac{\nu}{4}c_I^{-2}H^{-2},\frac{\beta}{4}\right\}.
\end{equation}
Finally, since
$
 {\gamma} (\|I_H\be_h^{(n)}\|_0^2+\|(I-I_H)\be_h^{(n)}\|_0^2)\ge (\gamma/2) \|\be_h^{(n)}\|_0^2
$,
from~\eqref{eq:aver}, \eqref{eq:aver2} and~\eqref{eq:apply(21)b}
we conclude~\eqref{eq:muboth_fully}.
\end{proof}
\begin{lema}\label{larios} Assume that the constants $\alpha$ and $B$ satisfy $0<\alpha<1$ and $B\ge 0$.
Then if the sequence of real numbers~$(a_n)_{n=0}^N$ satisfies
$$
a_{n}\le \alpha a_{n-1} +B, \qquad n=1,\ldots,N
$$
we have that
$$
a_n\le \alpha^{j} a_{n-j} + \frac{B}{1-\alpha},\qquad 0\le j\le n\le N.
$$
\end{lema}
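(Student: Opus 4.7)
The plan is to prove the bound by straightforward iteration of the one-step recurrence, together with summation of a geometric series.

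First, I would handle the trivial case $j=0$: the claim reduces to $a_n \le a_n + B/(1-\alpha)$, which holds since $B\ge 0$. This is the base case of an induction on $j$.

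For the inductive step, I would assume the bound holds for some $j$ with $0\le j<n$ and apply the hypothesis $a_{n-j}\le \alpha a_{n-j-1}+B$ to obtain
\begin{equation*}
a_n \le \alpha^{j} a_{n-j} + \frac{B}{1-\alpha}
\le \alpha^{j}\bigl(\alpha a_{n-j-1}+B\bigr) + \frac{B}{1-\alpha}
= \alpha^{j+1} a_{n-j-1} + \alpha^{j} B + \frac{B}{1-\alpha}.
\end{equation*}
Since $\alpha^{j} B \le B/(1-\alpha)$ is not quite what is needed, I would instead track the geometric sum exactly: iterating the recurrence $j$ times yields
\begin{equation*}
a_n \le \alpha^{j} a_{n-j} + B\sum_{k=0}^{j-1}\alpha^{k} = \alpha^{j} a_{n-j} + B\,\frac{1-\alpha^{j}}{1-\alpha}
\le \alpha^{j} a_{n-j} + \frac{B}{1-\alpha},
\end{equation*}
where the last inequality uses $0<\alpha<1$ (so $1-\alpha^j\le 1$) and $B\ge 0$.

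There is no real obstacle here; the only thing to be careful about is that $B\ge 0$ is needed in two places (to discard the $-\alpha^j B/(1-\alpha)$ term and to cover the $j=0$ case), and $\alpha<1$ is needed so that the geometric sum converges and the denominator $1-\alpha$ is positive. The result is purely algebraic and does not use any of the finite element or PDE machinery developed earlier; it will later serve as the discrete analog of the continuous exponential decay estimates used in the uniform-in-time error analysis.
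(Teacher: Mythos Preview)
Your argument is correct: iterating the recurrence $j$ times gives $a_n \le \alpha^{j} a_{n-j} + B\sum_{k=0}^{j-1}\alpha^{k}$, and bounding the finite geometric sum by $1/(1-\alpha)$ (using $0<\alpha<1$ and $B\ge 0$) yields the claim, with $j=0$ trivial. The paper does not actually prove this lemma but simply refers the reader to~\cite{Larios_et_al}, so your short direct computation supplies what the paper omits.
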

\begin{proof} See e.g., \cite{Larios_et_al}.\end{proof}
The following result is taken from~\cite{Ours}.
\begin{lema}\label{le:est-1} The following bounds hold
\begin{eqnarray}
\label{eq:K_0}
\sup_{\|\bvar\|_1=1}| b_h(\bu,\bu,\bvar)-b_h(\bs_h,\bs_h,\bvar)| &\le& K_0(\bu,p,|\Omega|) \|\bu-\bs_h\|_0,
\\
\sup_{\|\bvar\|_0=0}| b_h(\bu,\bu,\bvar)-b_h(\bs_h^m,\bs_h^m,\bvar)| &\le& K_1(\bu,|\Omega|) \|\bu-\bs_h^m\|_1,
\label{eq:K_1}
\end{eqnarray}
where
\begin{equation}
\label{Cup}
K_0(\bu,p,|\Omega|)=C\Bigl( K_1(\bu,|\Omega|)
+\overline N_1(\bu,p)^{1/2}\bigl(\overline N_{d-1}(\bu,p)+
|\Omega|^{(3-d)/d}\overline N_{2}(\bu,p)\bigr)^{1/2}\Bigr),
\end{equation}
\begin{equation}
\label{Cu}
K_1(\bu,|\Omega|) =C\Bigl((M_{d-2}(\bu)_{d-2}M_2(\bu))^{1/2} + |\Omega|^{(3-d)/(2d)}(M_1(\bu)M_2(\bu))^{1/2}\Bigl),
\end{equation}
and $\overline N_j(\bu,p)$ and~$M_j(\bu)$ are the quantities in~\eqref{eq:barraN}.
\end{lema}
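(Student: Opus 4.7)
The plan is to split both estimates using the same algebraic decomposition: writing $\be=\bu-\bs_h$ (or $\be_m=\bu-\bs_h^m$), we have
\begin{equation*}
b_h(\bu,\bu,\bvar)-b_h(\bs_h,\bs_h,\bvar)=b_h(\be,\bu,\bvar)+b_h(\bs_h,\be,\bvar),
\end{equation*}
and similarly with $\bs_h^m$. The term involving $\bu$ on the second slot will be the one generating the $K_1$ contribution, while the term whose middle slot is $\be$ (or $\be_m$) will be handled using the available $L^\infty$ and $L^{2d/(d-1)}$ bounds on the Stokes projections.

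For $b_h(\be,\bu,\bvar)$, I would bound the transport term by two different applications of H\"older, depending on whether we target $\|\be\|_0\|\bvar\|_1$ or $\|\be\|_1\|\bvar\|_0$. In the first case, I would use $|((\be\cdot\nabla)\bu,\bvar)|\le \|\be\|_0\|\nabla\bu\|_{L^{2d/(d-1)}}\|\bvar\|_{L^{2d}}$, then apply \eqref{eq:parti_ineq} to $\nabla\bu$ to get $(\|\bu\|_1\|\bu\|_2)^{1/2}$, and the Sobolev embedding $H^1\hookrightarrow L^{2d}$ to $\bvar$. The divergence term $\frac{1}{2}(\nabla\cdot\be\,\bu,\bvar)$ I would estimate by moving $\bu$ into $L^\infty$ via Agmon's inequality \eqref{eq:agmon}, producing the $(\|\bu\|_{d-2}\|\bu\|_2)^{1/2}$ piece of $K_1$. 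For the second estimate, I would swap roles: put $\be$ in $L^{2d}$ via Sobolev applied to $H^1_0$, and $\bvar$ in $L^2$. In both orientations I recover exactly the shape of $K_1(\bu,|\Omega|)$ in \eqref{Cu}, with the $|\Omega|^{(3-d)/(2d)}$ factor coming from tracking the scale-invariant constants in \eqref{sob1} and \eqref{eq:parti_ineq} for $d=2$.

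For $b_h(\bs_h,\be,\bvar)$, to expose $\|\be\|_0$ on the right (needed for \eqref{eq:K_0}), I would invoke skew-symmetry \eqref{skew} to write $b_h(\bs_h,\be,\bvar)=-b_h(\bs_h,\bvar,\be)$, whereupon $|((\bs_h\cdot\nabla)\bvar,\be)|\le \|\bs_h\|_\infty\|\nabla\bvar\|_0\|\be\|_0$ and $\frac{1}{2}|(\nabla\cdot\bs_h\,\bvar,\be)|\le\frac{1}{2}\|\nabla\bs_h\|_{L^{2d/(d-1)}}\|\bvar\|_{L^{2d}}\|\be\|_0$ can be handled by \eqref{cota_sh_inf} and \eqref{la_cota} respectively, producing the $N_1N_{d-1}$ and $N_1N_2$ contributions of $K_0$. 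For \eqref{eq:K_1} I do \emph{not} invoke skew-symmetry on $b_h(\bs_h^m,\be_m,\bvar)$; instead I bound it directly by $\|\bs_h^m\|_\infty\|\nabla\be_m\|_0\|\bvar\|_0+\frac{1}{2}\|\nabla\bs_h^m\|_{L^{2d/(d-1)}}\|\be_m\|_{L^{2d}}\|\bvar\|_0$ and apply \eqref{cota_sh_inf_mu}, \eqref{la_cota_mu}, so that the constant is independent of $\nu$.

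The principal obstacle is bookkeeping rather than a conceptual difficulty: I must keep every constant scale-invariant and keep the dimension-dependent $|\Omega|$ factors consistent across the $d=2$ and $d=3$ cases, since the exponents in Sobolev \eqref{sob1}, Agmon \eqref{eq:agmon}, and the interpolation inequality \eqref{eq:parti_ineq} change with $d$. In particular, verifying that the $(\|\bu\|_1\|\bu\|_2)^{1/2}$ contribution picks up precisely the $|\Omega|^{(3-d)/(2d)}$ factor displayed in \eqref{Cu}, while the Agmon term does not, is the delicate part; everything else reduces to the splittings above together with \eqref{cota_sh_inf}--\eqref{la_cota_mu} already proved in \cite{Ours}.
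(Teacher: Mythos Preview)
Your decomposition, the skew-symmetry swap for $b_h(\bs_h,\be,\bvar)$, the use of \eqref{cota_sh_inf}--\eqref{la_cota_mu}, and your treatment of both terms in the $K_1$ bound all match the paper's route (as visible in the proof of the companion Lemma~\ref{le:est-1b}, since the paper itself cites \cite{Ours} for the present lemma).

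There is, however, a genuine gap in your argument for~\eqref{eq:K_0}. Writing
\[
b_h(\be,\bu,\bvar)=((\be\cdot\nabla)\bu,\bvar)+\tfrac12(\nabla\cdot\be\,\bu,\bvar)
\]
and then ``moving $\bu$ into $L^\infty$'' in the divergence piece still leaves you with a factor $\|\nabla\cdot\be\|_0$ (times $\|\bu\|_\infty\|\bvar\|_0$), not $\|\be\|_0$; no choice of H\"older exponents recovers $\|\bu-\bs_h\|_0$ from $\nabla\cdot\be$ without a derivative loss. The fix the paper uses (see the proof of Lemma~\ref{le:est-1b}) is the skew-symmetric rewriting
\[
b_h(\be,\bu,\bvar)=\tfrac12\bigl((\be\cdot\nabla)\bu,\bvar\bigr)-\tfrac12\bigl((\be\cdot\nabla)\bvar,\bu\bigr),
\]
which is an exact identity (equivalently, integrate by parts in the divergence term using $\be\in H_0^1$). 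Now both pieces carry $\be$ undifferentiated: the first is bounded exactly as you propose, and the second by $\tfrac12\|\be\|_0\|\bu\|_\infty\|\nabla\bvar\|_0$, whereupon Agmon's inequality~\eqref{eq:agmon} delivers the $(M_{d-2}(\bu)M_2(\bu))^{1/2}$ contribution to $K_1$ that you were aiming for. With this one correction your outline goes through.
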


We will apply Lemma~\ref{lema_general} taking
$\bw_h^{(n)}=\bs_h(t_n)$, when $\mu=0$, where~$\bs_h$ satisfies~\eqref{stokesnew}, and, when $\mu>0$ taking
 $\bw_h^{(n)}=\bs_h^m(t_n)$, where~$\bs_h^m$ satisfies~\eqref{stokespro_mod_def}.
 Then, it is easy to check that $\bw_h^{(n)} $ satisfies~\eqref{eq:wh} where
\begin{equation}\label{tau1}
(\btau_h^{(n)},\bvar_h)=(\dot\bu^{(n)}-d_t \bw_h^{(n)},\bvar_h)+b_h(\bu^{(n)},\bu^{(n)},\bvar_h)-b_h(\bw_h^{(n)},\bw_h^{(n)},\bvar_h)
\end{equation}
and
\begin{equation}\label{tau2}
(\theta_h^{(n)},\nabla \cdot\bvar_h)=(\pi_h p^{(n)}-p^{(n)},\nabla \cdot \bvar_h)+\mu(\nabla \cdot (\bu^{(n)}-\bw_h^{(n)}),\nabla \cdot \bvar_h).
\end{equation}
Consequently,
we define the following quantities, which are related to the right-hand side
 of~\eqref{eq:muboth_fully}, when $\mu=0$ and when~$\mu>0$, respectively
 \begin{align}
 \label{C0}
 C_0&=\max_{n\ge 0}\biggl(\frac{2\hat c_P}{\nu} \|\btau_h^{(n)} \|_{-1}^2+ \beta c_0^2
 \|\bu^{(n)} - \bw_h^{(n)}\|_0^2\biggr).
 \\
 \label{C1}
C_1&=\max_{n\ge 0}\biggl(\frac{1}{L}\|\btau_h^{(n)}\|_0^2 + \beta c_0^2
 \|\bu^{(n)} - \bw_h^{(n)}\|_0^2 +\frac{2}{\mu} \|\theta_h^{(n)}\|_0^2\biggr),
 \end{align}
 We now estimate the values of~$C_0$ and~$C_1$.
 \begin{lema}\label{le:C0C1_est} For $C_0$ and~$C_1$ defined in~(\ref{C0}) and~(\ref{C1}), respectively,
 the following bounds hold
\begin{align}
\label{eq:cotaC0}
C_0 \le & \frac{4\hat c_P}{\nu}\max_{n\ge 0} \|\bu_t(t_n)-d_t \bw_h^{(n)}\|_{-1}^2
+ \hat C_0^2(r,\beta,\nu,|\Omega|,\bu,p)h^{2r}
\\
\label{eq:cotaC_1}
C_1 \le & \frac{2}{L}\max_{n\ge 0} \|\bu_t(t_n)-d_t \bw_h^{(n)}\|_{0}^2
+\hat C_1^2(r,h,\beta,\mu,|\Omega|,\bu,p) h^{2r-2}
\end{align}
where
\begin{align}
\label{hatC0}
\hat C_0^2(r,\beta,\nu,|\Omega|,\bu,p) = &
C\biggl( \beta+\frac{4\hat c_P}{\nu} K_0^2 (\bu,p,|\Omega|)\biggr)
\overline N_r^2(\bu,p)
\\
\label{hatC1}
\hat C_1^2(r,h,\beta,\mu,|\Omega|,\bu,p) = &C\biggl( \beta h^2+\mu+\frac{2}{L} K_1^2 (\bu,p,|\Omega|)\biggr)
M_r^2(\bu)+\frac{C}{\mu}M^2_{r-1}(p).
\end{align}
 \end{lema}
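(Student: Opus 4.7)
The plan is to estimate $C_0$ and $C_1$ by splitting $\btau_h^{(n)}$ into a temporal consistency part $\bu_t(t_n)-d_t\bw_h^{(n)}$ and a nonlinear consistency part $b_h(\bu^{(n)},\bu^{(n)},\cdot)-b_h(\bw_h^{(n)},\bw_h^{(n)},\cdot)$, and, for $C_1$, by choosing a convenient representative of $\theta_h^{(n)}$ before bounding it in $L^2$.

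First I would treat $C_0$, where $\bw_h^{(n)}=\bs_h(t_n)$ and $\mu=0$. From the definition~\eqref{tau1} and the elementary inequality $(a+b)^2\le 2a^2+2b^2$ applied in the $H^{-1}$ dual norm, together with the nonlinear estimate~\eqref{eq:K_0}, I obtain
\[
\|\btau_h^{(n)}\|_{-1}^2\le 2\|\bu_t(t_n)-d_t\bw_h^{(n)}\|_{-1}^2+2K_0(\bu,p,|\Omega|)^2\|\bu^{(n)}-\bs_h(t_n)\|_0^2.
\]
Both $\|\bu^{(n)}-\bs_h(t_n)\|_0$ and the $\|\bu^{(n)}-\bw_h^{(n)}\|_0$ appearing in~\eqref{C0} are controlled by $C\,N_r(\bu,p)h^r$ via the Stokes projection bound~\eqref{stokespro}. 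Inserting these into~\eqref{C0}, pulling out the prefactors $2\hat c_P/\nu$ and $\beta c_0^2$, and collecting like terms yields exactly~\eqref{eq:cotaC0} with $\hat C_0^2$ as in~\eqref{hatC0}.

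For $C_1$ I would proceed analogously in $L^2$ rather than $H^{-1}$. The same splitting, now paired with~\eqref{eq:K_1}, produces
\[
\|\btau_h^{(n)}\|_0^2\le 2\|\bu_t(t_n)-d_t\bw_h^{(n)}\|_0^2+2K_1(\bu,|\Omega|)^2\|\bu^{(n)}-\bs_h^m(t_n)\|_1^2,
\]
and~\eqref{stokespro_mod} supplies $\|\bu^{(n)}-\bs_h^m(t_n)\|_1\le C M_r(\bu)h^{r-1}$. For $\theta_h^{(n)}$, the identity~\eqref{tau2} only constrains it through its pairing with $\nabla\cdot\bvar_h$, so it suffices to take the natural representative $\theta_h^{(n)}=(\pi_h p^{(n)}-p^{(n)})+\mu\,\nabla\cdot(\bu^{(n)}-\bs_h^m(t_n))$. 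Combining~\eqref{eq:L2p}, \eqref{stokespro_mod} and~\eqref{eq:div} then yields $\|\theta_h^{(n)}\|_0\le C h^{r-1}\bigl(M_{r-1}(p)+\mu M_r(\bu)\bigr)$, so that $(2/\mu)\|\theta_h^{(n)}\|_0^2$ contributes a $(C/\mu)M_{r-1}(p)^2 h^{2r-2}$ term plus a $C\mu M_r(\bu)^2 h^{2r-2}$ term. Finally, $\beta c_0^2\|\bu^{(n)}-\bs_h^m(t_n)\|_0^2\le C\beta M_r(\bu)^2 h^{2r}=C\beta h^2\,M_r(\bu)^2 h^{2r-2}$. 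Summing and regrouping the coefficients of $h^{2r-2}$ produces~\eqref{eq:cotaC_1} with $\hat C_1^2$ as in~\eqref{hatC1}.

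The argument is largely mechanical once the earlier lemmas are in hand. The only point requiring care is matching the norm of $\btau_h^{(n)}$ to the one appearing in Lemma~\ref{lema_general}: the dual $H^{-1}$ norm in the unstabilized case (where the truncation is paired with $\nu^{-1}$ through Poincar\'e in~\eqref{eq:error6}) and the $L^2$ norm in the grad-div case (where the $1/L$ factor arises from Young's inequality in~\eqref{eq:error6_muno0}). A secondary subtlety is that $\theta_h^{(n)}$ is defined only through its action against $\nabla\cdot\bvar_h$; choosing the natural pressure-plus-divergence representative makes the $L^2$ bound straightforward and is what lets the grad-div dissipation absorb the grad-div consistency error cleanly.
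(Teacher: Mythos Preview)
Your proof is correct and follows essentially the same route as the paper: split $\btau_h^{(n)}$ into the temporal consistency term and the nonlinear consistency term, apply Lemma~\ref{le:est-1} together with the Stokes projection bounds~\eqref{stokespro} and~\eqref{stokespro_mod}, and bound~$\theta_h^{(n)}$ via~\eqref{eq:L2p} and~\eqref{stokespro_mod}. Your explicit remark about choosing a representative for~$\theta_h^{(n)}$ is a helpful clarification that the paper leaves implicit.
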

\begin{proof}  Applying
  \eqref{stokespro} when $\mu=0$ and~\eqref{stokespro_mod} when $\mu>0$ we get
\begin{align*}
\max_{n\ge 0}\|\bu(t_n)-\bw_h^{(n)}\|_0^2&\le C h^{2r}\overline N_r^2(\bu,p),\qquad \mu=0,
\nonumber\\
\max_{n \ge 0}\|\bu(t_n)-\bw_h^{(n)}\|_0^2&\le C h^{2r}M_r^2(\bu),\qquad \mu>0,
\nonumber
\end{align*}
where, recall, $\overline N_r$ and~$M_r$ are defined in~\eqref{eq:barraN}.
Also, applying~Lemma~\ref{le:est-1}  we have
\begin{align*}
\sup_{\|\bvar\|_1=1}|b_h(\bu(t_n),\bu(t_n),\bvar)-b_h(\bw_h^{(n)},\bw_h^{(n)},\bvar)|&\le K_0 (\bu,p,|\Omega|)\|\bu(t_n)-\bw_h^{(n)}\|_0,
\nonumber\\
\sup_{\|\bvar\|_0=1}|b_h(\bu(t_n),\bu(t_n),\bvar)-b_h(\bw_h^{(n)},\bw_h^{(n)},\bvar)|&\le K_1(\bu,|\Omega|)\|\bu(t_n)-\bw_h^{(n)}\|_1,
\end{align*}
so that,
applying \eqref{stokespro} when $\mu=0$ and~\eqref{stokespro_mod} when $\mu>0$
it follows that
\begin{align*}
\|\btau_h^{(n)}\|_{-1}^2 \le & 2\|\bu_t(t_n)-d_t \bw_h^{(n)}\|_{-1}^2
+
2K_0^2 (\bu,p,|\Omega|)\overline N_r^2(\bu,p)h^{2r},\qquad \mu=0,
\nonumber\\
\|\btau_h^{(n)}\|_{0}^2 \le &2\|\bu_t(t_n)-d_t \bw_h^{(n)}\|_{0}^2
+
2K_1^2 (\bu,|\Omega|)M_{r}^2(\bu)h^{2r-2},\qquad \mu>0,
\end{align*}
Similarly, from \eqref{eq:L2p} and \eqref{stokespro_mod} we obtain
\begin{eqnarray*}
\max_{n\ge 1}\|\theta_h^{(n)}\|_0^2\le C h^{2r-2}M^2_{r-1}(p)+C\mu^2 h^{2r-2}M_r^2(\bu),
\end{eqnarray*}
and the proof is finished.\end{proof}

\subsection{Implicit Euler method}
In this case, we set
$$
d_t=d_t^1,
$$
where
\begin{equation}
\label{eq:Euler}
d_t^1 \bu_h^{(n)}=\frac{\bu_h^{(n)}-\bu_h^{(n-1)}}{\Delta t},\qquad n\ge 1.
\end{equation}
For the solution~$\bu$ of~\eqref{NS}, Taylor expansion easily reveals that
\begin{equation}
\label{dt_trunc_eul}
\|\bu_t(t_n)-d_t^1\bu(t_n)\|_j\le C M_{j}(\bu_{tt})(\Delta_t)^2,\qquad j=0,-1,\qquad
n\ge 1.
\end{equation}

For the error $\be_h^{(n)} = \bu_h^{(n)}-\bw_h^{(n)}$,
it is easy to check that the following
relation holds
$$
\Delta t (d_t\be_h^{(n)},\be_h^{(n)}) = \frac{1}{2} \|\be_h^{(n)}\|_0^2 -
 \frac{1}{2} \|\be_h^{(n-1)}\|_0^2 +  \frac{1}{2} \|\be_h^{(n)}-\be_h^{(n-1)}\|_0^2.
 $$

Applying Lemma~\ref{lema_general} we have
\begin{eqnarray}\label{eq:muboth_fully2}
{\|\be_h^{(n)}\|_0^2}-{\|\be_h^{(n-1)}\|_0^2}+{\Delta t\gamma} \|\be_h^{(n)}\|_0^2
&\le& \Delta t C_{\overline \mu},\qquad j=0,1, \quad n\ge 1,
\end{eqnarray}
where $C_0$ and~$C_1$ are defined in~(\ref{C0}--\ref{C1}).
Applying Lemma~\ref{larios} with $$a_n=\|\be_h^{(n)}\|_0^2,\quad 
\quad \alpha=(1+\gamma \Delta t)^{-1},$$
we obtain
\begin{equation}\label{eq:final_euler}
\|\be_h^{(n)}\|_0^2\le \left(\frac{1}{1+\gamma \Delta t}\right)^{n}\|\be_h^{(0)}\|_0^2+\frac{1}{\gamma}C_{\overline\mu},  \quad n\ge 1.
\end{equation}
Notice also that since we are assuming $\beta\ge 8L$ we get
$
{1}/{\gamma}\le \max\left({4}/{\beta},{1}/{(2L)}\right)={1}/{(2L)}$.

Since the value of $C_{\overline\mu}$ is estimated in~Lemma~\ref{le:C0C1_est}
we only have to estimate  $\bu_t(t_n) -d_t\bw_h^{(n)}$.
By writing
\begin{align}
\label{eq:u_t-d_t}
\bu_t(t_n) -d_t\bw_h^{(n)} &= \bu_t(t_n) -d_t\bu(t_n) + d_t (\bu(t_n)-\bw_h^{(n)})
\nonumber\\
&{}=
\bu_t(t_n) -d_t\bu(t_n) +\frac{1}{\Delta t} \int_{t_{n-1}}^{t_n}\partial_t(\bu(\tau)-\bw_h(\tau))\,d\tau,
\end{align}
estimates~\eqref{dt_trunc_eul} and~\eqref{eq:stokes_menos1}
 allow us to write
\begin{equation}\label{eq:final_euler2}
\max_{n\ge 0} \|\bu_t(t_n)-d_t \bw_h^{(n)}\|_{-1}^2 \le M_{-1}^2(\bu_{tt})(\Delta t)^2 +
CN_{r-1}^2(\bu_t,p_t)h^{2r},
\end{equation}
when $\mu=0$ and $r\ge 3$ and~$\Omega$ is of class~${\cal C}^r$. For the mini element or when $\Omega$ is not of class~${\cal C}^r$ we have
\begin{equation}\label{eq:final_euler3}
\max_{n\ge 0} \|\bu_t(t_n)-d_t \bw_h^{(n)}\|_{-1}^2 \le M_{-1}^2(\bu_{tt})(\Delta t)^2 +
C|\Omega|^{2/d}N_{r}^2(\bu_t,p_t)h^{2r}.
\end{equation}
When $\mu>0$, using~\eqref{stokespro_mod} we have
\begin{equation}\label{eq:final_euler4}
\max_{n\ge 0} \|\bu_t(t_n)-d_t \bw_h^{(n)}\|_{0}^2 \le M_{0}^2(\bu_{tt})(\Delta t)^2 +
CM_{r-1}^2(\bu_t)h^{2r-2}.
\end{equation}
Then, from \eqref{eq:final_euler}, \eqref{eq:cotaC0}, \eqref{eq:cotaC_1}, 
\eqref{eq:final_euler2},
\eqref{eq:final_euler3}, \eqref{eq:final_euler4} and applying triangle inequality together with \eqref{stokespro} when $\mu=0$ and~\eqref{stokespro_mod} when $\mu>0$ we conclude the following theorem.
\begin{Theorem}\label{Th:main_muno0}
Assume that  the solution of~\eqref{NS} satisfies that $\bu\in L^\infty(H^s(\Omega)^d)\cap W^{1,\infty}(\Omega)^d$ and
$p\in L^\infty (H^{s-1}(\Omega)/{\mathbb R})$, $s\ge 2$.
Assume also that,  when $\mu=0$,
$\bu_t\in  L^\infty(H^{\max(2,s-1)}(\Omega)^d)$, $p_t\in L^\infty( H^{\max(1,s-2)}(\Omega)
/{\mathbb R})$ and the second derivative satisfies $\bu_{tt}\in L^\infty(H^{-1}(\Omega)^d)$, or, when $\mu>0$,
$\bu_t\in L^\infty(H^{s-1}(\Omega)^d)$ and $\bu_{tt}\in L^\infty(L^2(\Omega)^d)$.
 Let $\bu_h$ be the finite element approximation defined in
\eqref{eq:method2} when $d_t$ is given by~(\ref{eq:Euler}).
Then, if $\beta\ge 8L$ and $H$ satisfies condition~\eqref{eq:as2}
the following bound holds for $n\ge 1$ and $2\le r\le s$,
\begin{align*}
\| \bu(t_n)-\bu_h^{(n)}\|_0 \le& \frac{1}{(1+\gamma \Delta t )^{n/2}}\|\bu_h(0)-\bu(0)\|_0
+\frac{C}{(\nu L)^{1/2}}M_{-1}(\bu_{tt})\Delta t \\
&{}+
\frac{C}{L^{1/2}}\biggl(\hat C_0 +\frac{1}{\nu^{1/2}}|\Omega|^{(1+\hat r-r)/d}\overline N_{\hat r}(\bu_t,p_t)\biggr)h^{r},
\qquad \mu=0.
\\
\| \bu(t_n)-\bu_h^{(n)}\|_0 \le& \frac{1}{(1+\gamma \Delta t )^{n/2}}\|\bu_h(0)-\bu(0)\|_0
+\frac{C}{L} M_0(\bu_{tt})\Delta t\\
&{}+
\frac{C}{L^{1/2}}\biggl(\hat C_1 + \frac{1}{L^{1/2}} M_{r-1}(\bu_t)\biggr)h^{r-1},
\qquad \mu>0.
\end{align*}
where $\gamma$ is defined in \eqref{eq:gamma},
$L$ in~(\ref{eq:L}--\ref{eq:L_muno0}), $\hat C_0$ and~$\hat C_1$
in~\eqref{hatC0} and \eqref{hatC1}, respectively, and $\hat r=r-1$ if $r\ge 3$ and $\Omega$ is of class~${\cal C}^3$ and $\hat r=r$ otherwise.
\end{Theorem}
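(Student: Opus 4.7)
The plan is to exploit the general framework that has already been built up in Lemma~\ref{lema_general} and Lemma~\ref{le:C0C1_est}, specializing to the implicit Euler time discretization~\eqref{eq:Euler}. First I would choose the comparison sequence $\bw_h^{(n)}$ to be the (modified) discrete Stokes projection of~$\bu$: specifically $\bw_h^{(n)}=\bs_h(t_n)$ when $\mu=0$ (with~$\bs_h$ from~\eqref{stokesnew}) and $\bw_h^{(n)}=\bs_h^m(t_n)$ when $\mu>0$ (with~$\bs_h^m$ from~\eqref{stokespro_mod_def}). With these choices the identities~\eqref{tau1}--\eqref{tau2} hold, and hence $\bw_h^{(n)}$ satisfies the abstract relation~\eqref{eq:wh} driven by truncation errors $\btau_h^{(n)}$ and~$\theta_h^{(n)}$. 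Applying Lemma~\ref{lema_general} then yields the per-step inequality~\eqref{eq:muboth_fully2}, where the right-hand side is the quantity~$C_{\overline\mu}$ already packaged in~\eqref{C0}--\eqref{C1}.

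Next I would combine the implicit Euler identity $\Delta t\,(d_t\be_h^{(n)},\be_h^{(n)})=\tfrac12(\|\be_h^{(n)}\|_0^2-\|\be_h^{(n-1)}\|_0^2)+\tfrac12\|\be_h^{(n)}-\be_h^{(n-1)}\|_0^2$ with the inequality from Lemma~\ref{lema_general}, dropping the nonnegative term $\tfrac12\|\be_h^{(n)}-\be_h^{(n-1)}\|_0^2$, to arrive at $(1+\gamma\Delta t)\|\be_h^{(n)}\|_0^2\le \|\be_h^{(n-1)}\|_0^2+\Delta t\,C_{\overline\mu}$. An application of Lemma~\ref{larios} with $\alpha=(1+\gamma\Delta t)^{-1}$ and the trivial geometric sum $B/(1-\alpha)=(\Delta t\,C_{\overline\mu})\cdot(1+\gamma\Delta t)/(\gamma\Delta t)$ (bounded by $C_{\overline\mu}/\gamma$ up to routine simplification) then delivers the uniform-in-$n$ bound~\eqref{eq:final_euler}. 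Using $\beta\ge 8L$ to replace $1/\gamma$ by $1/(2L)$ gives the correct scaling of the final constants.

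The third step is to insert the quantitative estimates of $C_0$ and $C_1$ from Lemma~\ref{le:C0C1_est}. The only ingredient still to supply at this point is a bound on $\max_n\|\bu_t(t_n)-d_t\bw_h^{(n)}\|_j$ for $j=-1$ (when $\mu=0$) and $j=0$ (when $\mu>0$). Here I would decompose as in~\eqref{eq:u_t-d_t}: the first piece is controlled by the standard Taylor estimate~\eqref{dt_trunc_eul}, giving the $O(\Delta t)$ contribution with constant $M_{-1}(\bu_{tt})$ or $M_0(\bu_{tt})$; the second piece is $\Delta t^{-1}\int_{t_{n-1}}^{t_n}\partial_t(\bu(\tau)-\bw_h(\tau))\,d\tau$, which by Jensen's inequality is bounded pointwise-in-time by the Stokes (or modified Stokes) projection error applied to~$\bu_t$, controlled via~\eqref{eq:stokes_menos1} or~\eqref{stokespro_mod}. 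This gives the $h$-term in the estimate, with the extra power of $h$ in the $\mu=0$, $r\ge 3$ case coming from the duality bound~\eqref{eq:stokes_menos1} (hence the distinction between $\hat r=r-1$ and $\hat r=r$). Finally a triangle inequality $\|\bu(t_n)-\bu_h^{(n)}\|_0\le \|\bu(t_n)-\bw_h^{(n)}\|_0+\|\be_h^{(n)}\|_0$ with~\eqref{stokespro} or~\eqref{stokespro_mod} closes the estimate, absorbing the additional Stokes-projection error into the stated bound.

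The main obstacle, as in~\cite{Ours}, is not the time-stepping analysis (which is streamlined by Lemma~\ref{larios}) but rather bookkeeping the two regimes $\mu=0$ versus $\mu>0$ in parallel and keeping track of which norms of~$\btau_h^{(n)}$ and~$\theta_h^{(n)}$ appear. In particular, getting $\nu$-independent constants when $\mu>0$ requires bounding $\btau_h^{(n)}$ in $L^2$ rather than in $H^{-1}$ and using the modified Stokes projection estimates~\eqref{stokespro_mod}--\eqref{cotainfty1} consistently, whereas in the case $\mu=0$ one must work in $H^{-1}$ and use~\eqref{eq:stokes_menos1} to recover the optimal rate. Once this dichotomy is handled cleanly, the stated bounds follow by inserting~\eqref{eq:final_euler2}--\eqref{eq:final_euler4} into~\eqref{eq:final_euler}, exploiting $1/\gamma\le 1/(2L)$, and applying the final triangle inequality.
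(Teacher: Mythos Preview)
Your proposal is correct and follows essentially the same approach as the paper: specialize Lemma~\ref{lema_general} to $\bw_h^{(n)}=\bs_h(t_n)$ or $\bs_h^m(t_n)$, combine with the implicit Euler identity to reach~\eqref{eq:muboth_fully2}, apply Lemma~\ref{larios} to obtain~\eqref{eq:final_euler}, then feed in Lemma~\ref{le:C0C1_est} together with the decomposition~\eqref{eq:u_t-d_t} and the Stokes-projection bounds, and finish with the triangle inequality. The paper's argument matches yours step for step, including the use of $\beta\ge 8L$ to absorb $1/\gamma$ into $1/(2L)$ and the $\hat r$ dichotomy via~\eqref{eq:stokes_menos1}.
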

\begin{remark}\label{re:nu_large} {\rm
For the case $\mu>0$ one can get a bound of size $O(h^r)$ instead of $O(h^{r-1})$ for the spatial component of the error comparing $\bu_h^{(n)}$ instead of
with $\bw_h^{(n)}=\bs_h^m(t_n)$, where~$\bs_h^m$ satisfies~\eqref{stokespro_mod_def}, with $\bw_h^{(n)}=\bs_h(t_n)$,
where~$\bs_h$ satisfies~\eqref{stokesnew} but adding ${}+\mu(\nabla\cdot\bs_h,\nabla\cdot\bvar_h)$ to the left-hand side of the
first equation. However, arguing in this way the constants in the error bounds depend on inverse powers on $\nu$ and then are useful in practice only when $\nu$ is not too small. In the numerical experiments of Section 4 one can observe a rate of convergence $r$ for the method for the bigger values of $\nu$ shown in the figures that decreases to  $r-1$ as $\nu$ diminishes.
This remark applies not only for the error analysis of the implicit Euler method but also for the other methods analyzed below.}
\end{remark}
\begin{remark}{\rm
To prove the existence of solution of the fully discrete scheme~\eqref{eq:method2} for an arbitrary length of the time step one can argue
as in \cite[Theorem 1.2, Chapter II]{temam_exi} with an argument based on the Brouwer's fixed point theorem (see also \cite[Proposition 3.2]{Ibdah_Mondaini_Titi}, \cite[Remark 7.70]{Volker_libro}). To prove uniqueness one can argue as in  \cite[Theorem 3.7]{Ibdah_Mondaini_Titi}   (see also \cite[Remark 7.70]{Volker_libro}).}
\end{remark}
\subsection{Implicit BDF2}
In this case,
we set
$$
d_t=d_t^2,
$$
where
\begin{equation}
\label{eq:BDF2_1}
d_t^2\bu_h^{(n)}=\frac{3\bu_h^{(n)}-4\bu_h^{(n-1)}+\bu_h^{(n-2)}}{2\Delta t},\qquad
n\ge 2.
\end{equation}
and
\begin{equation}
\label{eq:BDF2_2}
d_t^2\bu_h^{(1)} =d_t^1\bu_h^{(1)}= \frac{\bu_h^{(1)}-\bu_h^{(0)}}{\Delta t},
\end{equation}
that is, the first step is performed with the implicit Euler method. As before, Taylor
expansion easily reveals
\begin{equation}
\label{dt_trunc_bdf2}
\|\bu_t(t_n)-d_t^2\bu(t_n)\|_j\le C M_{j}(\bu_{ttt})(\Delta_t)^3,\qquad j=0,-1,\qquad
n\ge 2,
\end{equation}
while for $n=1$, estimate~\eqref{dt_trunc_eul} applies.

For the error
$\be_h^{(n)}=\bu_h^{(n)}-\bw_h^{(n)}$, and $n\ge 2$, it
is well-known that (see, e.g., \cite{HairerII})
\begin{align}\label{eq:enerbdf2}
\Delta t\left(d_t\be_h^{(n)},\be_h^{(n)}\right)=&\|\bE_h^{(n)}\|_G^2
-\|\bE_h^{(n-1)}\|_G^2
\nonumber\\
&{}
+\frac{1}{4}\|\bv_h^{(n)}-2\bv_h^{(n-1)}+\bv_h^{(n-2)}\|_0^2,
\end{align}
where, here and in he sequel we denote $\bE_h^{(n)}=(\be_h^{(n)},\be_h^{(n-1)})$ and
 \begin{align}\label{eq:forG}
\|\bE_h^{(n)}\|_G^2&=
\frac{1}{4}\|\be_h^{(n)}\|_0^2 + \frac{1}{4}\|2\be_h^{(n)} - \be_h^{(n-1)}\|_0^2
\\
&=\frac{5}{4}\|\be_h^{(n)}\|_0^2 -(\be_h^{(n)},\be_h^{(n-1)}) +\frac{1}{4}
\|\be_h^{(n-1)}\|_0^2.\nonumber
\end{align}
Consequently, considering the eigenvalues $\lambda_1\ge \lambda_2>0$
of the matrix
$$
G=\frac{1}{4}\left[\begin{array}{rr} 5& -2\\ -2 & 1\end{array}\right],
$$
we have
\begin{equation}\label{eq:equiv}
\lambda_2\left(\|\be_h^{(n)}\|_0^2 +\|\be_h^{(n-1)}\|_0^2\right)\le
\|\bE_h^{(n)}\|_G^2 \le
\lambda_1\left(\|\be_h^{(n)}\|_0^2 +\|\be_h^{(n-1)}\|_0^2\right).
\end{equation}
And easy calculation shows
\begin{equation}
\label{lambda1}
1\le \lambda_1 =\frac{3+2\sqrt{2}}{4}\le \frac{3}{2}.
\end{equation}
Applying \eqref{eq:muboth_fully} as before and taking into account
the definition of~constants~$C_0$ and~$C_1$ in (\ref{C0}--\ref{C1}) and~\eqref{eq:enerbdf2} we have
\begin{equation}\label{eq:muboth_fully_bdf2}
\|\bE_h^{(n)}\|_G^2
-\|\bE_h^{(n-1)}\|_G^2 +\frac{\gamma}{2}\Delta t \|\be_h^{(n)}\|_0^2
\le \Delta t\frac{C_{\overline \mu}}{2}.
\end{equation}
Now, arguing as in  \cite{Larios_et_al} we add $\pm\gamma \Delta t\|\be_h^{(n-1)}\|_0^2/8$ so that we can write
\begin{align*}
&\|\bE_h^{(n)}\|_G^2
+\frac{3}{8}\gamma\Delta t \|\be_h^{(n)}\|_0^2+\frac{\gamma}{8}\Delta t \left(\|\be_h^{(n)}\|_0^2
+ \|\be_h^{(n-1)}\|_0^2\right)\nonumber\\
&\quad{}\le \|\bE_h^{(n-1)}\|_G^2 +\frac{\gamma}{8}\Delta t \|\be_h^{(n-1)}\|_0^2+\Delta t\frac{C_{\overline \mu}}{2}.
\end{align*}
For the third term on the left-hand side above,
applying \eqref{eq:equiv} we may write
\begin{align}
\label{eq:equiv2}
\frac{\gamma}{8}\Delta t \left(\|\be_h^{(n)}\|_0^2
+ \|\be_h^{(n-1)}\|_0^2\right)& \ge \frac{\gamma}{8\lambda_1}\Delta t
\|\bE_h^{(n)}\|_G^2
 \ge
 \frac{\gamma}{12}\Delta t
\|\bE_h^{(n)}\|_G^2 ,
\end{align}
where in the last inequality we have applied~\eqref{lambda1}
Thus, we have
\begin{align*}
\left(1+\frac{\gamma}{12}\Delta t \right)&\|\bE_h^{(n)}\|_G^2
+\frac{3}{8}\gamma\Delta t \|\be_h^{(n)}\|_0^2
\le \|\bE_h^{(n-1)}\|_G^2 +\frac{\gamma}{8}\Delta t \|\be_h^{(n-1)}\|_0^2+\Delta t\frac{C_{\overline \mu}}{2}.
\end{align*}
Assuming
$$
\frac{3}{8}\gamma\Delta t \ge \left(1+\frac{\gamma}{12}\Delta t \right)
\frac{\gamma}{8}\Delta t,
$$
which holds for
\begin{equation}
\label{eq:restri_Deltat}
\Delta t \le \frac{24}{\gamma},
\end{equation}
we get
\begin{align*}
\left(1+\frac{\gamma}{12}\Delta t \right)&\left(\|\bE_h^{(n)}\|_G^2
+\frac{\gamma}{8}\Delta t \|\be_h^{(n)}\|_0^2\right)
\le \|\bE_h^{(n-1)}\|_G^2 +\frac{\gamma}{8}\Delta t
 \|\be_h^{(n-1)}\|_0^2+\Delta t\frac{C_{\overline\mu}}{2}.
\end{align*}
Applying Lemma~\ref{larios} with $$a_n=\|\bE_h^{(n)}\|_G^2
+\frac{\gamma}{8} \Delta t\|\be_h^{(n)}\|_0^2,\quad \alpha=
\frac{1}{1+\frac{\gamma}{12}\Delta t}$$
and $B=\Delta tC_{\overline \mu}/(2\alpha)$
 we get
\begin{align*}
\|\bE_h^{(n)}\|_G^2
+\frac{\gamma}{8}\Delta t \|\be_h^{(n)}\|_0^2\le& \frac{1}{\left(1+\frac{\gamma}{12} \Delta t\right)^{n-1}}\left(\|\bE_h^{(1)}\|_G^2
+\frac{\gamma}{8} \Delta t\|\be_h^{(1)}\|_0^2\right)
+\frac{6}{\gamma}C_{\overline \mu}.
\end{align*}
Taking into account that from \eqref{eq:forG} we get $\|\bE_h^{(n)}\|_G^2
\ge \|\be_h^{(n)}\|_0^2/4$ and applying \eqref{eq:equiv} again
and that $\lambda_1\le 2$, we have
\begin{equation}
\label{eq:cota_bdf2}
\|\be_h^{(n)}\|_0^2
\le \frac{4}{\left(1+\frac{\gamma}{12} \Delta t\right)^{n-1}}\left(2\left(\|\be_h^{(1)}\|_0^2+\|\be_h^{(0)}\|_0^2\right)
+\frac{\gamma}{8} \Delta t\|\be_h^{(1)}\|_0^2\right)
+\frac{24}{\gamma}C_{\overline \mu}.
\end{equation}
To estimate~$C_{\overline\mu}$  on the right-hand side above, after applying Lemma~\ref{le:C0C1_est}, we are left with the estimation of~$\bu(t_n) -d_t\bw_h^{(n)}$.
Arguing as
in~(\ref{eq:u_t-d_t}),
we may write
$
\bu_t(t_n) -d_t\bw_h^{(n)} = \bu_t(t_n) -d_t\bu(t_n) + d_t (\bu(t_n)-\bw_h^{(n)}),
$ and taking into account that~$d_t=d_t^2$, we  express
\begin{align*}
&d_t (\bu(t_n)-\bw_h^{(n)})
\nonumber\\
&\qquad{}=
\frac{3}{2\Delta t} \int_{t_{n-1}}^{t_n}\partial_t(\bu(\tau)-\bw_h(\tau))\,d\tau-\frac{1}{2\Delta t} \int_{t_{n-2}}^{t_{n-1}}\partial_t(\bu(\tau)-\bw_h(\tau))\,d\tau.
\end{align*}
Thus, using~(\ref{dt_trunc_bdf2}), and~\eqref{eq:stokes_menos1}
 we obtain similar
estimates as~(\ref{eq:final_euler2}--\ref{eq:final_euler4})
but with $M_{j}^2(\bu_{tt})(\Delta t)^2$, $j=-1,0$, replaced by $M_{j}^2(\bu_{ttt})(\Delta t)^4$.
To estimate $\|\be_h^{(1)}\|_0$ on the right-hand side of~(\ref{eq:cota_bdf2}).
we recall that~$\bu_h^{(1)}$ is obtained
by one step of the implicit Euler method, so that we can use~(\ref{eq:muboth_fully2}),
which gives,
\begin{equation}\label{eq:incial1}
{\|\be_h^{(1)}\|_0^2}\le \frac{1}{1+\gamma\Delta t}\left({\|\be_h^{(0)}\|_0^2}
+\Delta t C_{\overline \mu}\right)
\le \frac{1}{1+\frac{\gamma}{12}\Delta t}\left({\|\be_h^{(0)}\|_0^2}
+\Delta t C_{\overline \mu}\right).
\end{equation}
As before, the value of~$C_{\overline\mu}$ is estimated by~Lemma\ref{le:C0C1_est},
\eqref{dt_trunc_eul} and~(\ref{eq:final_euler2}--\ref{eq:final_euler4}). Thus, we conclude with the following result.

\begin{Theorem}\label{th:main_bdf2}
Under the hypotheses of~Theorem~\ref{Th:main_muno0}, assume also that
 $\bu_{ttt}\in L^\infty(H^{-1}(\Omega)^d)$ when $\mu=0$ or
 $\bu_{ttt}\in L^\infty(L^2(\Omega)^d)$, otherwise, and that $\Delta t$ satisfies
 \eqref{eq:restri_Deltat}. Then, for the finite element approximation, solution
 of~\eqref{eq:method2} when $d_t$ is given by~(\ref{eq:BDF2_1}--\ref{eq:BDF2_2}),
 the following bounds hold:
 \begin{align*}
\| \bu(t_n)-\bu_h^{(n)}\|_0 \le& \frac{C}{(1+\frac{\gamma }{12}\Delta t )^{(n-1)/2}}\|\bu_h(0)-\bu(0)\|_0\nonumber\\
&{}
+\frac{C}{(\nu L)^{1/2}}\left(M_{-1}(\bu_{tt})+M_{-1}(\bu_{ttt})\right)(\Delta t)^2 \\
&{}+
\frac{C}{L^{1/2}}\biggl(\hat C_0 +\frac{1}{\nu^{1/2}}|\Omega|^{(1+\hat r-r)/d}\overline N_{\hat r}(\bu_t,p_t)\biggr)h^{r},
\qquad \mu=0.
\\
\| \bu(t_n)-\bu_h^{(n)}\|_0 \le&\frac{C}{(1+\frac{\gamma }{12}\Delta t )^{(n-1)/2}}\|\bu_h(0)-\bu(0)\|_0
\nonumber\\
&{}
+\frac{C}{L} \left(M_0(\bu_{tt})+M_0(\bu_{ttt})\right)(\Delta t)^2\\
&{}+
\frac{C}{L^{1/2}}\biggl(\hat C_1 + \frac{1}{L^{1/2}} M_{r-1}(\bu_t)\biggr)h^{r-1},
\qquad \mu>0.
\end{align*}
where $\gamma$ is defined in \eqref{eq:gamma},
$L$ in~(\ref{eq:L}--\ref{eq:L_muno0}), $\hat C_0$ and~$\hat C_1$
in~\eqref{hatC0} and \eqref{hatC1}, respectively, and $\hat r=r-1$ if $r\ge 3$ and $\Omega$ is of class~${\cal C}^3$ and $\hat r=r$ otherwise.
\end{Theorem}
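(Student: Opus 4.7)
The plan is to follow the same broad strategy as in the implicit Euler case (Theorem~\ref{Th:main_muno0}), but replace the simple telescoping identity by the BDF2 G-norm identity \eqref{eq:enerbdf2}, so that the scalar sequence to which Lemma~\ref{larios} is applied is $a_n=\|\bE_h^{(n)}\|_G^2+(\gamma/8)\Delta t\|\be_h^{(n)}\|_0^2$. First I would apply Lemma~\ref{lema_general} (valid since $d_t\be_h^{(n)}$ is not constrained to a particular discretization), together with~\eqref{eq:enerbdf2} and the definitions of $C_0,C_1$, to obtain the inequality \eqref{eq:muboth_fully_bdf2}. Then, following exactly the manipulation already displayed in the excerpt, I would split $\gamma\Delta t\|\be_h^{(n)}\|_0^2/2$ into three pieces, use \eqref{eq:equiv}--\eqref{lambda1} to bound $(\|\be_h^{(n)}\|_0^2+\|\be_h^{(n-1)}\|_0^2)$ from below by $(2/3)\|\bE_h^{(n)}\|_G^2$, and impose the time-step restriction \eqref{eq:restri_Deltat} so that the $\gamma/8$-weighted norm of $\|\be_h^{(n-1)}\|_0^2$ on the right is absorbed. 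This produces the one-step contraction with factor $1/(1+\gamma\Delta t/12)$ to which Lemma~\ref{larios} applies, giving \eqref{eq:cota_bdf2}.

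Second, I would bound the initial seed on the right of \eqref{eq:cota_bdf2}. Since $\bu_h^{(1)}$ is obtained by one implicit Euler step, the bound \eqref{eq:incial1} on $\|\be_h^{(1)}\|_0^2$ is available; together with the trivial bound $\|\bE_h^{(1)}\|_G^2\le 2\lambda_1(\|\be_h^{(1)}\|_0^2+\|\be_h^{(0)}\|_0^2)$ and \eqref{lambda1}, this controls the prefactor in terms of $\|\be_h^{(0)}\|_0^2$ and $\Delta t\,C_{\overline\mu}$; the latter contributes at worst to the stationary term already present.

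Third, I would estimate $C_{\overline\mu}$ using Lemma~\ref{le:C0C1_est}. The only ingredient that is specific to BDF2 is the consistency error $\|\bu_t(t_n)-d_t^2\bw_h^{(n)}\|_j$. For $n\ge 2$ I would split
\begin{equation*}
\bu_t(t_n)-d_t^2\bw_h^{(n)}
=\bigl(\bu_t(t_n)-d_t^2\bu(t_n)\bigr)+d_t^2\bigl(\bu(t_n)-\bw_h(t_n)\bigr),
\end{equation*}
apply the Taylor bound \eqref{dt_trunc_bdf2}, and for the projection term write the BDF2 increment as the displayed combination of integrals of $\partial_t(\bu-\bw_h)$ over $[t_{n-1},t_n]$ and $[t_{n-2},t_{n-1}]$, so that \eqref{eq:stokes_menos1} (case $\mu=0$) or \eqref{stokespro_mod} (case $\mu>0$) produce spatial estimates identical in form to \eqref{eq:final_euler2}--\eqref{eq:final_euler4}, but with $M_j(\bu_{tt})^2(\Delta t)^2$ replaced by $M_j(\bu_{ttt})^2(\Delta t)^4$; the case $n=1$ is covered by the Euler truncation bound \eqref{dt_trunc_eul}, whose contribution is already $O(\Delta t^2)$. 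Inserting these into Lemma~\ref{le:C0C1_est} and using $1/\gamma\le 1/(2L)$ as in the Euler case gives the stated $\Delta t^2$ temporal rate.

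Finally, I would take square roots in \eqref{eq:cota_bdf2} and use the triangle inequality with $\bu(t_n)-\bw_h^{(n)}$ bounded by \eqref{stokespro} when $\mu=0$ and \eqref{stokespro_mod} when $\mu>0$, exactly as in the Euler theorem, to reach the two stated estimates. The main obstacle is the bookkeeping in the BDF2 energy step: one must choose the splitting of the $\gamma\Delta t\|\be_h^{(n)}\|_0^2/2$ term carefully so that, after using \eqref{eq:equiv}--\eqref{lambda1}, the coefficient on the left strictly dominates that on the right under a mild condition on $\Delta t$, and so that Lemma~\ref{larios} yields a uniform-in-$n$ bound with constants independent of $T$; everything else is a direct BDF2 analogue of the Euler argument already in the paper.
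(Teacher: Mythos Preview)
Your proposal is correct and follows essentially the same approach as the paper: the paper also applies Lemma~\ref{lema_general} together with the $G$-norm identity~\eqref{eq:enerbdf2} to obtain~\eqref{eq:muboth_fully_bdf2}, adds $\pm(\gamma/8)\Delta t\|\be_h^{(n-1)}\|_0^2$, uses \eqref{eq:equiv}--\eqref{lambda1} and the restriction~\eqref{eq:restri_Deltat} to set up Lemma~\ref{larios} with $a_n=\|\bE_h^{(n)}\|_G^2+(\gamma/8)\Delta t\|\be_h^{(n)}\|_0^2$, bounds the seed via the Euler step~\eqref{eq:incial1}, estimates $C_{\overline\mu}$ through Lemma~\ref{le:C0C1_est} with the BDF2 truncation~\eqref{dt_trunc_bdf2} and the integral representation of $d_t^2(\bu-\bw_h)$, and finishes with the triangle inequality and~\eqref{stokespro}/\eqref{stokespro_mod}. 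The only cosmetic difference is that the paper uses $\|\bE_h^{(1)}\|_G^2\le\lambda_1(\|\be_h^{(1)}\|_0^2+\|\be_h^{(0)}\|_0^2)$ with $\lambda_1\le 2$ rather than your $2\lambda_1$, which is inconsequential.
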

\begin{remark}{\rm
Let us observe that we are considering a scheme in which the first time step is performed by means of the implicit Euler method and then we can insert
\eqref{eq:incial1} into \eqref{eq:cota_bdf2}. However, in view of \eqref{eq:cota_bdf2}, as pointed out in \cite{Larios_et_al}, the first step could, for example, be initialized to zero and the algorithm still converges to the true solution.}
\end{remark}
\subsection{Semi-implicit BDF2}
Now, we consider a fully discrete approximation satisfying
\begin{align}\label{eq:method2_fully_im}
(d_t \bu_h^{(n)},\bvar_h)
+\nu(\nabla \bu_h^{(n)},&\nabla \bvar_h)+b_h(\tilde \bu_h^{n},\bu_h^{(n)},\bvar_h)+\mu(\nabla \cdot \bu_h^{(n)},\nabla \cdot\bvar_h)={}\nonumber\\
&(\bff^{(n)},\bvar_h)-\beta(I_H(\bu_h^{(n)})-I_H(\bu^{(n)}),I_H\bvar_h),
\end{align}
where $d_t$ is given by~(\ref{eq:BDF2_1}--\ref{eq:BDF2_2}), and
\begin{equation}\label{eq:lautilde}
\tilde \bu_h^{(n)}=2\bu_h^{(n-1)}-\bu_h^{(n-2)}, \qquad n\ge 2,
\end{equation}
and, for simplicity~$\tilde \bu_h^{(1)}= \bu_h^{(0)}$.

Arguing as in the proof
of Lemma~\ref{lema_general} with obvious changes, the following result follows.

\begin{lema}\label{lema_generalb}
Let $(\bu_h^{(n)})_{n=0}^\infty$ be the finite element approximation defined in
\eqref{eq:method2_fully_im} and let $(\bw_h^{(n)})_{n=0}^\infty$,
 $(\tilde \bw_h^{(n)})_{n=0}^\infty$, $(\btau_h^{(n)})_{n=0}^\infty$, $(\theta_h^{(n)})_{n=1}^\infty$ in  $V_{h,r}$ be sequences satisfying
\begin{align}\label{eq:whb}
(d_t \bw_h^{(n)},\bvar_h)+\nu(\nabla \bw_h^{(n)},&\nabla \bvar_h)+b_h(\tilde \bw_h^{(n)},\bw_h^{(n)},\bvar_h)+\mu(\nabla \cdot \bw_h^{(n)},\nabla \cdot \bvar_h)
={}\nonumber\\
&(\bff^{(n)},\bvar_h)+(\btau_h^{(n)},\bvar_h)+\overline \mu (\theta_h^{(n)},\nabla \cdot \bvar_h),
\end{align}
Fix $\delta>0$, and assume that the quantity~$L'$
defined
 in \eqref{eq:Lb}, below, when $\mu=0$, and
in~\eqref{eq:L_muno0b}, below,
when $\mu>0$,
 is bounded. Then, if $\beta\ge 8L'/\delta$ and $H$ satisfies condition
 \begin{equation}
\label{eq:as2b}
H\le \frac{(\nu\delta)^{1/2}}{(8L')^{1/2}c_I}.
\end{equation}
the following bounds hold for~$\be_h^{(n)}=\bu_h^{(n)}-\bw_h^{(n)}$
and~$\tilde \be_h^{(n)}=\tilde \bu_h^{(n)}-\tilde \bw_h^{(n)}$,
 \begin{align}\label{eq:muboth_fullyb}
&(d_t\be_h^{(n)},\be_h^{(n)})+\frac{\gamma}{2} \|\be_h^{(n)}\|_0^2
+\frac{\nu}{4}\|\nabla\be_h^{(n)}\|_0^2
+\frac{3}{4 }\mu\|\nabla \cdot \be_h^{(n)}\|_0^2
 \nonumber \\
    &\qquad {}\le
\delta\left(\frac{1}{2}\|\nabla\bw_h^{(n)} \|_\infty \|\tilde \be_h^{(n)}\|_0^2
+(1-\overline \mu)\frac{\nu}{4}\|\nabla \tilde \be_h^{(n)}\|_0^2 + \frac{\mu}{4}
\|\nabla \cdot\tilde \be_h^{(n)}\|_0^2 \right)\\\
&\qquad {}+ \overline k \|\theta_h^{(n)}\|_0^2+\frac{\beta }{2}c_0^2\|\bu^{(n)}-\bw_h^{(n)}\|_0^2
 +\left((1-\overline \mu)\frac{\hat c_P}{\nu}+\frac{{\overline \mu}}{2L'}\right)\|\btau_h^{(n)}\|_{-1+\overline \mu}^2,\nonumber
\end{align}
where, $\overline\mu$ and~$\overline k$ are defined in~\eqref{eq:mubarra}, and  $\gamma$ is defined in \eqref{eq:gamma}.
\end{lema}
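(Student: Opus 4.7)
The plan is to mimic the proof of Lemma~\ref{lema_general}, with the only essential modification arising from the semi-implicit treatment of the convective term. I would start by subtracting \eqref{eq:whb} from \eqref{eq:method2_fully_im} to obtain the error equation
\begin{align*}
(d_t\be_h^{(n)},\bvar_h)&+\nu(\nabla\be_h^{(n)},\nabla\bvar_h)+\beta(I_H\be_h^{(n)},I_H\bvar_h)+\mu(\nabla\cdot\be_h^{(n)},\nabla\cdot\bvar_h)\\
&{}+b_h(\tilde\bu_h^{(n)},\bu_h^{(n)},\bvar_h)-b_h(\tilde\bw_h^{(n)},\bw_h^{(n)},\bvar_h)\\
&{}=\beta(I_H\bu^{(n)}-I_H\bw_h^{(n)},I_H\bvar_h)+(\btau_h^{(n)},\bvar_h)+\overline\mu(\theta_h^{(n)},\nabla\cdot\bvar_h),
\end{align*}
and test with $\bvar_h=\be_h^{(n)}\in V_{h,r}$. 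The treatment of the pressure/projection term $\theta_h^{(n)}$, the nudging term with $I_H$, and the truncation term $\btau_h^{(n)}$ is identical to that in the proof of Lemma~\ref{lema_general}, so those three contributions will again be bounded by $\overline k\|\theta_h^{(n)}\|_0^2+(\mu/4)\|\nabla\cdot\be_h^{(n)}\|_0^2$ (when $\mu>0$), $(\beta/2)c_0^2\|\bu^{(n)}-\bw_h^{(n)}\|_0^2+(\beta/2)\|I_H\be_h^{(n)}\|_0^2$, and the corresponding $\|\btau_h^{(n)}\|_{-1+\overline\mu}^2$ bound.

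The only real change is the convective difference. I would apply Lemma~\ref{le:nonlinb} with the assignments $\hat\bv_h=\tilde\bu_h^{(n)}$, $\bv_h=\bu_h^{(n)}$, $\hat\bw_h=\tilde\bw_h^{(n)}$, $\bw_h=\bw_h^{(n)}$, taking $\varepsilon=\nu$ when $\mu=0$ and $\varepsilon=\mu$ when $\mu>0$, and keeping the parameter $\delta$ free. This yields
\begin{align*}
|b_h(\tilde\bu_h^{(n)},\bu_h^{(n)},\be_h^{(n)})-b_h(\tilde\bw_h^{(n)},\bw_h^{(n)},\be_h^{(n)})|
\le{}&\frac{1}{\delta}\hat L(\bw_h^{(n)},\varepsilon)\|\be_h^{(n)}\|_0^2\\
&{}+\delta\left(\tfrac{1}{2}\|\nabla\bw_h^{(n)}\|_\infty\|\tilde\be_h^{(n)}\|_0^2+\tfrac{\varepsilon}{4}\|\nabla\cdot\tilde\be_h^{(n)}\|_0^2\right).
\end{align*}
In the case $\mu=0$, I would use \eqref{eq:div} to rewrite $\|\nabla\cdot\tilde\be_h^{(n)}\|_0^2\le\|\nabla\tilde\be_h^{(n)}\|_0^2$, which produces the $(1-\overline\mu)(\nu/4)\|\nabla\tilde\be_h^{(n)}\|_0^2$ contribution appearing on the right-hand side of~\eqref{eq:muboth_fullyb}; in the case $\mu>0$ the divergence term is kept as it is. I would then define $L'$ exactly as $L$ in \eqref{eq:L}--\eqref{eq:L_muno0}, so that the coefficient in front of $\|\be_h^{(n)}\|_0^2$ originating from the nonlinear estimate becomes $L'/\delta$ (plus the analogous $\|\btau_h^{(n)}\|_0^2$ treatment in the $\mu>0$ case, which also gets a $L'/\delta$ in place of $L$).

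From this point the argument is the same absorption game as in Lemma~\ref{lema_general}: split $\|\be_h^{(n)}\|_0^2\le 2\|I_H\be_h^{(n)}\|_0^2+2\|(I-I_H)\be_h^{(n)}\|_0^2$, use the assumption $\beta\ge 8L'/\delta$ to absorb the $I_H\be_h^{(n)}$ part into $(\beta/2)\|I_H\be_h^{(n)}\|_0^2$, and use the modified threshold \eqref{eq:as2b} on $H$ (which is exactly \eqref{eq:as2} with $\nu$ replaced by $\nu\delta$ and $L$ by $L'$) together with \eqref{eq:cotainter} to absorb $2(L'/\delta)\|(I-I_H)\be_h^{(n)}\|_0^2$ into $(\nu/4)\|\nabla\be_h^{(n)}\|_0^2$. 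Reorganising the remaining $(\nu/4)\|\nabla\be_h^{(n)}\|_0^2+(\beta/4)\|I_H\be_h^{(n)}\|_0^2$ via the identity leading to~\eqref{eq:apply(21)b} delivers the $(\gamma/2)\|\be_h^{(n)}\|_0^2$ term on the left of~\eqref{eq:muboth_fullyb}, while the $(\nu/4)\|\nabla\be_h^{(n)}\|_0^2$ and $(3\mu/4)\|\nabla\cdot\be_h^{(n)}\|_0^2$ are the portions not consumed by the absorption steps.

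The main technical obstacle is the careful bookkeeping of the factor $\delta$: it must simultaneously scale the threshold on $\beta$ and the mesh restriction on $H$ so that the two absorption arguments still go through, and it must be kept free on the right-hand side because the $\tilde\be_h^{(n)}$ terms cannot be moved to the left (they correspond to error at previous time steps and will be handled later in the BDF2 energy argument, where $\delta$ is eventually tuned). Everything else is a direct transcription of the proof of Lemma~\ref{lema_general}.
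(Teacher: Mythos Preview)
Your approach is essentially identical to the paper's proof. One small correction: $L'$ in \eqref{eq:Lb}--\eqref{eq:L_muno0b} is \emph{not} defined exactly as $L$ but drops the $\tfrac{1}{2}\|\nabla\bw_h^{(n)}\|_\infty$ contribution, precisely because in the semi-implicit case that factor multiplies $\|\tilde\be_h^{(n)}\|_0^2$ rather than $\|\be_h^{(n)}\|_0^2$ and therefore stays on the right-hand side of~\eqref{eq:muboth_fullyb} instead of being absorbed (this is also why the $\btau_h$ splitting for $\mu>0$ uses $\tfrac{1}{2L'}\|\btau_h\|_0^2+\tfrac{L'}{2}\|\be_h\|_0^2$, not $L'/\delta$).
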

\begin{proof} We follow the proof of Lemma~\ref{lema_general}, that
is, subtracting \eqref{eq:whb} from \eqref{eq:method2_fully_im} and
taking $\varphi_h=\be_h^{(n)}$ we get
\begin{align}\label{eq:error2b}
(d_t\be_h^{(n)},\be_h^{(n)})+\nu\|\nabla \be_h^{(n)}\|_0^2+\beta \|I_H\be_h^{(n)}\|_0^2+&\mu\|\nabla \cdot \be_h^{(n)}\|_0^2
\\
\le|b_h(\tilde \bu_h^{(n)},\bu_h^{(n)},\be_h^{(n)})-b_h(\tilde\bw_h^{(n)},\bw_h^{(n)},\be_h&^{(n)})|
 +\beta|(I_H \bu^{(n)}-I_H \bw_h^{(n)},I_H\be_h^{(n)})|\nonumber\\
&\quad+|(\btau_h^{(n)},\be_h^{(n)})|+|\overline \mu(\theta_h^{(n)},\nabla \cdot \be_h^{(n)})|.\nonumber
\end{align}
To bound the nonlinear terms, we argue as follows.
When $\mu=0$, we apply Lemma~\ref{le:nonlinb}
with $\hat\bv_h=\tilde \bu_h^{(n)}$, $\bv_h=\bu_h^{(n)}$,
$\hat\bw_h=\tilde \bw_h^{(n)}$, $\bw_h=\bw_h^{(n)}$, and~$\epsilon=\nu$ and  we also use~\eqref{eq:div}, so that we have,
\begin{eqnarray}\label{eq:error3b}
&& |b_h(\tilde \bu_h^{(n)},\bu_h^{(n)},\be_h^{(n)})
-b_h(\tilde \bw_h^{(n)},\bw_h^{(n)},\be_h^{(n)})|
\nonumber \\
&&\quad {}\le \frac{1}{\delta}\hat L(\bw_h^{(n)},\nu)\|\be_h^{(n)}\|_0^2
+\frac{\delta}{2}\|\nabla \bw_h^{(n)} \|_\infty\|\tilde \be_h^{(n)}\|_0^2+ \frac{\delta\nu}{4}\|\nabla\cdot\be_h^{(n)}\|_0^2
\nonumber\\
&&\quad {}\le  \frac{1}{\delta}\hat L(\bw_h^{(n)},\nu)\|\be_h^{(n)}\|_0^2
+\frac{\delta}{2}\|\nabla \bw_h^{(n)} \|_\infty\|\tilde \be_h^{(n)}\|_0^2+\frac{\delta\nu}{4}\|\nabla \be_h^{(n)}\|_0^2.
\end{eqnarray}
When $\mu$ is positive, we apply Lemma~\ref{le:nonlinb}
with $\hat\bv_h=\tilde \bu_h^{(n)}$, $\bv_h=\bu_h^{(n)}$,
$\hat\bw_h=\tilde \bw_h^{(n)}$, $\bw_h=\bw_h^{(n)}$, and~~$\epsilon=\mu$,
\begin{align}\label{eq:error3_muno0b}
&|b_h(\tilde \bu_h^{(n)},\bu_h^{(n)},\be_h^{(n)})
-b_h(\tilde\bw_h^{(n)},\bw_h^{(n)},\be_h^{(n)})|
\nonumber \\
&\quad{} \le
\frac{1}{\delta} \hat L(\bw_h^{(n)},\mu)\|\be_h^{(n)}\|_0^2
+\frac{\delta}{2} \|\nabla \bw_h^{(n)}\|_\infty\|\tilde \be_h^{(n)}\|_0^2+\frac{\delta \mu}{4}\|\nabla\cdot\tilde \be_h^{(n)}\|_0^2.
\end{align}
We now set
\begin{eqnarray}\label{eq:Lb}
L'&=&\max_{n \ge 0}\hat L(\bw_h^{(n)},\nu),\quad {\rm if}\quad \mu=0,
\\
\label{eq:L_muno0b}
L'&=&2\max_{n\ge 0}\hat L(\bw_h^{(n)},\mu),\hfill\quad {\rm if}\quad \mu> 0.
\end{eqnarray}
The rest of the terms on the right-hand side of~\eqref{eq:error2b} are bounded as in the proof
of Lemma~\ref{lema_general}, but replacing $L$ by~$L'$. Thus, instead of~(\ref{eq:otra}), we now have
\begin{align}\label{eq:otrab}
&(d_t\be_h^{(n)},\be_h^{(n)})+(3+\overline  \mu)\frac{\nu}{4}\|\nabla \be_h^{(n)}\|_0^2+\frac{\beta}{2} \|I_H \be_h^{(n)}\|_0^2
+\frac{3}{4}\mu\|\nabla \cdot \be_h^{(n)}\|_0^2
\\ &{}\le \frac{1}{\delta} L'\|\be_h\|_0^2 +
\delta\left(\frac{1}{2}\|\nabla\bw_h^{(n)} \|_\infty \|\tilde \be_h^{(n)}\|_0^2
+(1-\overline \mu)\frac{\nu}{4}\|\nabla \tilde \be_h^{(n)}\|_0^2 + \frac{\mu}{4}
\|\nabla \cdot\tilde \be_h^{(n)}\|_0^2 \right)
\nonumber\\
&\quad{}+\overline k \|\theta_h^{(n)}\|_0^2+\frac{\beta}{2} c_0^2\|\bu^{(n)}-\bw_h^{(n)}\|_0^2+\left((1-\overline \mu)\frac{\hat c_P}{\nu}+\frac{\overline \mu}{2L'}\right)\|\btau_h^{(n)}\|_{-1+\overline \mu}^2.
\nonumber
\end{align}
We also bound
$L'\|\be_h^{(n)}\|_0^2\le 2L' \|I_H e_h^{(n)}\|_0^2+2L' \|(I-I_H)e_h^{(n)}\|_0^2$.
Now, since we are assuming that $\beta\ge 8L'/\delta$, so that
$\beta/2-2(L'/\delta)\ge \beta/4$, and taking into account
that $3+\overline \mu\ge 3$
instead of~(\ref{eq:aver}) we get
\begin{align}
\label{eq:averb}
&(d_t\be_h^{(n)},\be_h^{(n)})\!+\!\frac{3}{4}\nu\|\nabla \be_h^{(n)}\|_0^2
-2\frac{L'}{\delta}\|(I-I_H)\be_h^{(n)}\|_0^2\!+\!\frac{\beta}{4} \|I_H \be_h^{(n)}\|_0^2
+\frac{3}{4}\mu|\nabla \cdot \be_h^{(n)}\|_0^2
\nonumber\\
&\qquad {}\le
\delta\left(\frac{1}{2}\|\nabla \bw_h^{(n)} \|_\infty \|\tilde \be_h^{(n)}\|_0^2
+(1-\overline \mu)\frac{\nu}{4}\|\nabla \tilde \be_h^{(n)}\|_0^2 + \frac{\mu}{4}
\|\nabla \cdot\tilde \be_h^{(n)}\|_0^2 \right)
\nonumber\\
&\qquad{}+ \overline k \|\theta_h^{(n)}\|_0^2+\frac{\beta }{2}c_0^2\|\bu^{(n)}-\bw_h^{(n)}\|_0^2+\left((1-\overline \mu)\frac{\hat c_P}{\nu}+\frac{{\overline \mu}}{2L}\right)\|\btau_h^{(n)}\|_{-1+\overline \mu}^2.
\end{align}
Also, since we are now assuming~(\ref{eq:as2b}),
we have
\begin{align}
\label{eq:apply(21)ab}
\frac{\nu}{2}\|\nabla \be_h^{(n)}\|_0^2-2\frac{L'}{\delta}\|(I-I_H)e_h^{(n)}\|_0^2 &\ge
\frac{\nu}4\|\nabla e_h^{(n)}\|_0^2,
\end{align}
Thus, arguing as in the rest of the proof of Lemma~\ref{lema_general},
\eqref{eq:muboth_fullyb} follows.
\end{proof}

As before, we take $\bw_h^{(n)} = \bs_h(t_n)$ if $\mu=0$, and $\bw_h^{(n)} = \bs_h^m(t_n)$, otherwise. Notice that the truncation error~$\btau_h^{(n)}$ now is
\begin{equation}\label{tau1b}
(\btau_h^{(n)},\bvar_h)=(\dot\bu^{(n)}-d_t \bw_h^{(n)},\bvar_h)+b_h(\bu^{(n)},\bu^{(n)},\bvar_h)-b_h(\tilde \bw_h^{(n)},\bw_h^{(n)},\bvar_h)
\end{equation}

We also define
\begin{align}
 \label{C0'}
C_0'&=\max_{n\ge 0}\biggl(\frac{2\hat c_P}{\nu} \|\btau_h^{(n)} \|_{-1}^2+ \beta c_0^2
 \|\bu^{(n)} - \bw_h^{(n)}\|_0^2\biggr).
 \\
 \label{C1'}
C_1'&=\max_{n\ge 0}\biggl(\frac{1}{L'}\|\btau_h^{(n)}\|_0^2 + \beta c_0^2
 \|\bu^{(n)} - \bw_h^{(n)}\|_0^2 +\frac{2}{\mu} \|\theta_h^{(n)}\|_0^2\biggr),
 \end{align}

Thus, applying Lemma~\ref{lema_generalb} and recalling~\eqref{eq:enerbdf2}, we have
 \begin{align*}
&\|\bE_h^{(n)}\|_G^2 - \|\bE_h^{(n-1)}\|_G^2+\frac{\gamma}{2}\Delta t \|\be_h^{(n)}\|_0^2
+\frac{\nu}{4}\Delta t \|\nabla\be_h^{(n)}\|_0^2
+\frac{3}{4 }\Delta t \mu\|\nabla \cdot \be_h^{(n)}\|_0^2
\nonumber \\
    &\quad {}\le
\delta\Delta t \left(\frac{1}{2}\|\nabla \bw_h^{(n)} \|_\infty \|\tilde \be_h^{(n)}\|_0^2
+(1-\overline \mu)\frac{\nu}{4}\|\nabla \tilde \be_h^{(n)}\|_0^2 + \frac{\mu}{4}
\|\nabla \cdot\tilde \be_h^{(n)}\|_0^2 \right)+\frac{\Delta t }{2}C_{\overline\mu}'.
\nonumber
\end{align*}
In view of the defintions of~$\gamma$ in~\eqref{eq:gamma}, $\tilde L$ in~\eqref{eq:L1b} and $L'$ in~(\ref{eq:Lb}--\ref{eq:L_muno0b}),
and the restriction~\eqref{eq:as2b} we have
$$
\frac{1}{2}\|\nabla\bw_h^{(n)} \|_\infty \le L' \le \delta \frac{\gamma}{2}.
$$
Thus, we may write
 \begin{align}\label{eq:semimp2}
&\|\bE_h^{(n)}\|_G^2 - \|\bE_h^{(n-1)}\|_G^2+\frac{\gamma}{2}\Delta t \|\be_h^{(n)}\|_0^2
+\frac{\nu}{4}\Delta t \|\nabla\be_h^{(n)}\|_0^2
+\frac{3}{4 }\Delta t \mu\|\nabla \cdot \be_h^{(n)}\|_0^2
\nonumber \\
    &\quad {}\le
\delta\Delta t \left(\delta\frac{\gamma}{2} \|\tilde \be_h^{(n)}\|_0^2
+(1-\overline \mu)\frac{\nu}{4}\|\nabla \tilde \be_h^{(n)}\|_0^2 + \frac{\mu}{4}
\|\nabla \cdot\tilde \be_h^{(n)}\|_0^2 \right)+\frac{\Delta t }{2}C_{\overline\mu}'.
\end{align}
We add $\pm \Delta t(\gamma /8) \|\be_h^{(n-1)}\|_0^2$ to the left hand side above,
so that recalling~\eqref{eq:equiv2} and
noticing that
$$
\|\bE_h^{(n-1)}\|_G^2 =\frac{1}{4} \|\be_h^{(n-1)}\|_0^2 + \frac{1}{4}\|\tilde \be_h^{(n)}\|_0^2
$$
and
\begin{align*}
\delta\frac{\gamma}{2} \|\tilde \be_h^{(n)}\|_0^2&
+(1-\overline \mu)\frac{\nu}{4}\|\nabla \tilde \be_h^{(n)}\|_0^2 + \frac{\mu}{4}
\|\nabla \cdot\tilde \be_h^{(n)}\|_0^2
\\
&{}\le 4\left( \delta\frac{\gamma}{2} \| \bE_h^{(n-1)}\|_G^2
+(1-\overline \mu)\frac{\nu}{4}\|\nabla \bE_h^{(n-1)}\|_0^2 + \frac{\mu}{4}
\|\nabla \cdot \bE_h^{(n-1)}\|_G^2\right)
\end{align*}
where $\nabla \bE_h^{(n)}=( \nabla \be_h^{(n)},  \nabla \be_h^{(n-1)})$ and
 $\nabla\cdot  \bE_h^{(n)}=( \nabla\cdot  \be_h^{(n)},  \nabla\cdot \be_h^{(n-1)})$,
we may write
 \begin{align}\label{eq:semimp2}
&\left(1+\frac{\gamma}{12}\Delta t\right)\|\bE_h^{(n)}\|_G^2 +
{\Delta t}\left(\frac{3}{8}\gamma \|\be_h^{(n)}\|_0^2+
\frac{\nu}{4}\|\nabla\be_h^{(n)}\|_0^2
+\frac{3}{4}\mu\|\nabla \cdot \be_h^{(n)}\|_0^2\right)
\nonumber \\
    &\quad {}\le \left(1+2\delta^2\gamma\Delta t\right)\|\bE_h^{(n-1)}\|_G^2
 +\Delta t\frac{\gamma}{8}\|\be_h^{(n-1)} \|_0^2
\nonumber    \\
&\quad {}+\delta\Delta t\left(
(1-\overline \mu)\nu \|\nabla \bE_h^{(n-1)}\|_0^2 + \mu
\|\nabla \cdot \bE_h^{(n-1)}\|_G^2\right)
 +\frac{\Delta t }{2}C_{\overline\mu}'.
\end{align}
We treat separately the cases $\mu>0$, and $\mu=0$. For the former, we drop
the term~$\Delta t(\nu/4) \|\nabla\be_h^{(n)}\|_0^2$ on the left hand side
of~\eqref{eq:semimp2} and
for the last term on the left-hand side of~(\ref{eq:semimp2}) we write
\begin{align*}
\frac{3\mu}{4}\|\nabla \cdot \be_h^{(n)}\|_0^2&=\frac{\mu}{2}\|\nabla \cdot \be_h^{(n)}\|_0^2+
\frac{\mu}{4}\left(\|\nabla \cdot \be_h^{(n)}\|_0^2 + \|\nabla \cdot \be_h^{(n-1)}\|_0^2
\right) - \frac{\mu}{4}  \|\nabla \cdot \be_h^{(n-1)}\|_0^2
\nonumber\\
&{}\ge \frac{\mu}{2}\|\nabla \cdot \be_h^{(n)}\| +
\frac{\mu}{6} \|\nabla\cdot\bE_h^{(n)}\|_G^2 -
\frac{\mu}{4}  \|\nabla \cdot \be_h^{(n-1)}\|_0^2
\end{align*}
where, in the last inequality we have argued as in~\eqref{eq:equiv2}.
Thus, from~(\ref{eq:semimp2}) it follows that
 \begin{align}\label{eq:semimp3}
&\left(1+\frac{\gamma}{12}\Delta t\right)\|\bE_h^{(n)}\|_G^2 +
\Delta t \frac{\mu}{6} \|\nabla\cdot\bE_h^{(n)}\|_G^2 +
{\Delta t}\left(\frac{3}{8}\gamma \|\be_h^{(n)}\|_0^2
+\frac{\mu}{2}\|\nabla \cdot \be_h^{(n)}\|_0^2\right)
\nonumber \\
     &\quad {}\le \left(1+2\delta^2\gamma\Delta t\right)\|\bE_h^{(n-1)}\|_G^2
     +\delta \Delta t \mu \|\nabla \cdot \bE_h^{(n-1)}\|_G^2
\nonumber    \\
&\quad {}+\Delta t\left(\frac{\gamma}{8}\|\be_h^{(n-1)} \|_0^2+\frac{\mu}{4}
  \|\nabla \cdot \be_h^{(n-1)}\|_0^2\right)
 +\frac{\Delta t }{2}C_{\overline\mu}'.
\end{align}
We chose
\begin{equation}
\label{cond_delta}
\delta < \frac{1}{12},
\end{equation}
and
\begin{equation}
\label{cond_Dt}
\Delta t \le \frac{12}{\gamma},
\end{equation}
so that the following inequalities hold:
\begin{align}
\label{ineq1}
\left(1+\frac{\gamma}{12}\Delta t\right) &> \left(1+2\delta^2\gamma\Delta t\right),
\\
\nonumber
\frac{1}{6} &> \frac{1+\frac{\gamma}{12}\Delta t}
{1+2\delta^2\gamma\Delta t }\delta ,
\\
\label{ineq3}
2&> \frac{1+\frac{\gamma}{12}\Delta t}
{1+2\delta^2\gamma\Delta t }.
\end{align}
Thus,
for
\begin{align*}
a_n=&\|\bE_h^{(n)}\|_G^2
     +\frac{1}{1+2\delta^2 \gamma\Delta t}\biggl(\delta \Delta t {\mu}\|\nabla \cdot \bE_h^{(n)}\|_G^2
\nonumber \\
&{}+\Delta t\left(\frac{\gamma}{8}\|\be_h^{(n)} \|_0^2+\frac{\mu}{4}
  \|\nabla \cdot \be_h^{(n)}\|_0^2\right)\biggr),
\end{align*}
from~(\ref{eq:semimp3}) it follows that
\begin{equation}
\label{eq:semimp4}
\left(1+\frac{\gamma}{12}\Delta t\right) a_n \le
\left(1+2\delta^2\gamma\Delta t\right) a_{n-1} + \frac{\Delta t}{2} C_{\overline\mu}'.
\end{equation}

When $\mu=0$, for the third term on the left-hand side of~(\ref{eq:semimp2}),
we write
\begin{align*}
\frac{\nu}{4}\|\nabla  \be_h^{(n)}\|_0^2&=\frac{3}{16}\nu \|\nabla\be_h^{(n)}\|_0^2+
\frac{\nu}{16}\left(\|\nabla \be_h^{(n)}\|_0^2 + \|\nabla \be_h^{(n-1)}\|_0^2
\right) - \frac{\nu}{16}  \|\nabla \be_h^{(n-1)}\|_0^2
\nonumber\\
&{}\ge \frac{3}{16}\nu\|\nabla\be_h^{(n)}\| +
\frac{\nu}{24} \|\nabla\bE_h^{(n)}\|_G^2 -
\frac{\nu}{16}  \|\nabla  \be_h^{(n-1)}\|_0^2
\end{align*}
where, in the last inequality we have argued as in~\eqref{eq:equiv2}.  Now,
besides~\eqref{cond_Dt}, we assume
\begin{equation}
\label{cond_delta_mu0}
\delta \le \frac{1}{48}
\end{equation}
so that, besides~(\ref{ineq1}) and~(\ref{ineq3}),
the following inequality holds
\begin{equation}
\frac{1}{24} > \frac{1+\frac{\gamma}{12}\Delta t}
{1+2\delta^2\gamma\Delta t } \delta,
\end{equation}
Thus, arguing as before, we have that for
\begin{align*}
a_n=&\|\bE_h^{(n)}\|_G^2
     +\frac{1}{1+2\delta^2 \gamma\Delta t}\biggl(\delta \Delta t \nu \|\nabla \bE_h^{(n)}\|_G^2
\nonumber \\
&{}+\Delta t\left(\frac{\gamma}{8}\|\be_h^{(n)} \|_0^2
+\frac{\nu}{16}
  \|\nabla  \be_h^{(n)}\|_0^2\right)\biggr),
\end{align*}
relation~(\ref{eq:semimp4}) holds.

The estimation of~$C_{\overline \mu}'$ is done in Lemma~\ref{le:C0C1_estb}, below,
so that we can conclude with the following result.

\begin{Theorem}\label{th:main_bdf2_semimp}
Under the hypotheses of~Theorem~\ref{Th:main_muno0} but with $\beta\ge 8L'/\delta$ instead of $\beta\ge 8L$ and $H$ satisfying condition
 \eqref{eq:as2b} instead of ~\eqref{eq:as2},  assume also that
 $\Delta t$ satisfies
 \eqref{cond_Dt}.
Fix $\delta \in(0,1/12)$ if
$\mu>0$ or $\delta\in~(0,1/48)$ if $\mu=0$. Then, for the finite element approximation, solution
 of~\eqref{eq:method2_fully_im} when $d_t$ is given by~(\ref{eq:BDF2_1}--\ref{eq:BDF2_2}),
 the following bounds hold:
 \begin{align*}
\| \bu(t_n)-\bu_h^{(n)}\|_0 \le& C\left(\frac{1+2\delta^2\gamma\Delta t}{1+\frac{\gamma }{12}\Delta t} \right)^{(n-1)/2} \|\bu_h(0)-\bu(0)\|_0\nonumber\\
&{}
+\frac{C}{(\nu L)^{1/2}}\left(M_{-1}(\bu_{ttt})+K_1(\bu,|\Omega|)M_0(\bu_{tt})\right)(\Delta t)^2 \\
&{}+
\frac{C}{L^{1/2}}\biggl(\hat C_0 +\frac{1}{\nu^{1/2}}|\Omega|^{(1+\hat r-r)/d}\overline N_{\hat r}(\bu_t,p_t)\biggr)h^{r},
\qquad \mu=0.
\\
\| \bu(t_n)-\bu_h^{(n)}\|_0 \le&C\left(\frac{1+2\delta^2\gamma\Delta t}{1+\frac{\gamma }{12}\Delta t} \right)^{(n-1)/2}\|\bu_h(0)-\bu(0)\|_0
\nonumber\\
&{}
+\frac{C}{L} \left(M_{0}(\bu_{ttt})+K_1(\bu,|\Omega|)M_1(\bu_{tt})\right)(\Delta t)^2\\
&{}+
\frac{C}{L^{1/2}}\biggl(\hat C_1 + \frac{1}{L^{1/2}} M_{r-1}(\bu_t)\biggr)h^{r-1},
\qquad \mu>0.
\end{align*}
where $\gamma$ is defined in \eqref{eq:gamma},
$L$ in~(\ref{eq:L}--\ref{eq:L_muno0}), $\hat C_0$ and~$\hat C_1$
in~\eqref{hatC0} and \eqref{hatC1}, respectively, and $\hat r=r-1$ if $r\ge 3$ and $\Omega$ is of class~${\cal C}^3$ and $\hat r=r$ otherwise.
\end{Theorem}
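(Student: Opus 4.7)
The plan is to combine the general per-step estimate already supplied by Lemma~\ref{lema_generalb} with the BDF2 energy identity \eqref{eq:enerbdf2}, so as to produce a one-step recursion in the $G$-norm of $\bE_h^{(n)}=(\be_h^{(n)},\be_h^{(n-1)})$ of the form $(1+\gamma\Delta t/12)\,a_n\le(1+2\delta^2\gamma\Delta t)\,a_{n-1}+(\Delta t/2)\,C'_{\overline\mu}$, to which Lemma~\ref{larios} can be applied. The natural choice of auxiliary sequences is $\bw_h^{(n)}=\bs_h(t_n)$ when $\mu=0$ and $\bw_h^{(n)}=\bs_h^m(t_n)$ when $\mu>0$, together with $\tilde\bw_h^{(n)}=2\bw_h^{(n-1)}-\bw_h^{(n-2)}$. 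With these choices the truncation~$\btau_h^{(n)}$ is \eqref{tau1b} and $\theta_h^{(n)}$ is given by \eqref{tau2}, so that $C'_{\overline\mu}$ in \eqref{C0'}--\eqref{C1'} controls everything on the right-hand side of \eqref{eq:muboth_fullyb}.

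The main obstacle I expect is the presence of the terms in $\tilde\be_h^{(n)}=2\be_h^{(n-1)}-\be_h^{(n-2)}$ on the right-hand side of \eqref{eq:muboth_fullyb}: unlike in the implicit BDF2 case, these depend on \emph{two} previous time steps and must be absorbed into a single backward-in-time quantity $a_{n-1}$. The key algebraic observation, already used in the lead-up to the statement, is $\|\tilde\be_h^{(n)}\|_0^2\le 4\|\bE_h^{(n-1)}\|_G^2$ (and analogously for $\nabla\tilde\be_h^{(n)}$, $\nabla\cdot\tilde\be_h^{(n)}$). Using this, and the bound $\tfrac12\|\nabla\bw_h^{(n)}\|_\infty\le L'\le\delta\gamma/2$ coming from \eqref{eq:Lb}--\eqref{eq:L_muno0b} and the sharpened mesh condition \eqref{eq:as2b}, the $\delta\tilde\be_h^{(n)}$ contributions get folded into $a_{n-1}$. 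The case distinction then simply reflects which dissipation is available: for $\mu>0$ I keep the divergence term and choose $\delta<1/12$, $\Delta t\le 12/\gamma$; for $\mu=0$ I must extract dissipation from $\nu\|\nabla\be_h^{(n)}\|_0^2$ alone, which forces the tighter $\delta\le 1/48$ in order to validate the analogue of \eqref{ineq1}--\eqref{ineq3}.

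To estimate $C'_{\overline\mu}$ I will prove an analogue of Lemma~\ref{le:C0C1_est}, splitting the nonlinear truncation as
\begin{equation*}
b_h(\bu^{(n)},\bu^{(n)},\cdot)-b_h(\tilde\bw_h^{(n)},\bw_h^{(n)},\cdot)
=\bigl[b_h(\bu^{(n)},\bu^{(n)},\cdot)-b_h(\bw_h^{(n)},\bw_h^{(n)},\cdot)\bigr]+b_h(\bw_h^{(n)}-\tilde\bw_h^{(n)},\bw_h^{(n)},\cdot).
\end{equation*}
The first bracket is handled by Lemma~\ref{le:est-1} exactly as in the implicit case, giving the same $h^r$ (or $h^{r-1}$) spatial contribution. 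The second term, using the Taylor expansion $\bu(t_n)-2\bu(t_{n-1})+\bu(t_{n-2})=O((\Delta t)^2\bu_{tt})$ transferred to $\bw_h^{(n)}-\tilde\bw_h^{(n)}$ via \eqref{stokespro} or \eqref{stokespro_mod}, produces the extra $K_1(\bu,|\Omega|)M_j(\bu_{tt})(\Delta t)^2$ term that appears in the statement of the theorem. The time-consistency error $\bu_t(t_n)-d_t^2\bw_h^{(n)}$ is decomposed as in \eqref{eq:u_t-d_t} and bounded using \eqref{dt_trunc_bdf2} combined with \eqref{eq:stokes_menos1} (for $\mu=0$) or \eqref{stokespro_mod} (for $\mu>0$), contributing the $M_j(\bu_{ttt})(\Delta t)^2$ term.

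For the initial step I exploit that $\bu_h^{(1)}$ is obtained by one implicit Euler step, so that \eqref{eq:incial1}, derived in the previous subsection, applies verbatim and yields $\|\be_h^{(1)}\|_0^2\le(1+\gamma\Delta t/12)^{-1}(\|\be_h^{(0)}\|_0^2+\Delta t\,C'_{\overline\mu})$ (after observing $L'\le L$ and enlarging constants if necessary). Plugging into the recursion produced by Lemma~\ref{larios}, using $\|\bE_h^{(n)}\|_G^2\ge\tfrac14\|\be_h^{(n)}\|_0^2$ from \eqref{eq:forG}, and finally invoking the triangle inequality $\|\bu(t_n)-\bu_h^{(n)}\|_0\le\|\bu(t_n)-\bw_h^{(n)}\|_0+\|\be_h^{(n)}\|_0$ together with \eqref{stokespro} or \eqref{stokespro_mod} produces the two stated bounds. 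I anticipate that verifying the precise numerical thresholds on $\delta$ (and checking that the constant $2$ in \eqref{ineq3} is actually attained when $\mu=0$) will be the most delicate bookkeeping, whereas the spatial error and the $C'_{\overline\mu}$ estimation mirror the implicit BDF2 analysis almost verbatim.
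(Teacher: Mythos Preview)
Your overall strategy is exactly the paper's: apply Lemma~\ref{lema_generalb}, combine it with the BDF2 identity~\eqref{eq:enerbdf2}, use $\|\tilde\be_h^{(n)}\|_0^2\le 4\|\bE_h^{(n-1)}\|_G^2$ and $\tfrac12\|\nabla\bw_h^{(n)}\|_\infty\le L'\le \delta\gamma/2$ to reach the recursion~\eqref{eq:semimp4}, separate the cases $\mu>0$ and $\mu=0$ with the thresholds~\eqref{cond_delta}, \eqref{cond_delta_mu0}, \eqref{cond_Dt}, and close with Lemma~\ref{larios}, the triangle inequality and~\eqref{stokespro}/\eqref{stokespro_mod}. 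The paper proceeds in precisely this way.

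There is one genuine difference, in the estimation of~$C'_{\overline\mu}$. You split the nonlinear truncation by inserting $\pm b_h(\bw_h^{(n)},\bw_h^{(n)},\cdot)$, so that Lemma~\ref{le:est-1} handles the first bracket and the remainder is $b_h(\bw_h^{(n)}-\tilde\bw_h^{(n)},\bw_h^{(n)},\cdot)$. The paper instead inserts $\pm b_h(\tilde\bu^{(n)},\bu^{(n)},\cdot)$ with $\tilde\bu^{(n)}=2\bu^{(n-1)}-\bu^{(n-2)}$, proves a dedicated Lemma~\ref{le:est-1b} for $b_h(\tilde\bu^{(n)},\bu^{(n)},\cdot)-b_h(\tilde\bw_h^{(n)},\bw_h^{(n)},\cdot)$, and is left with $b_h(\bu^{(n)}-\tilde\bu^{(n)},\bu^{(n)},\cdot)$. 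The advantage of the paper's route is that $\bu^{(n)}-\tilde\bu^{(n)}$ is divergence-free, so the $H^{-1}$ (and $L^2$) bound reduces cleanly to $\|\bu^{(n)}-\tilde\bu^{(n)}\|_0\|\bu^{(n)}\|_\infty$, yielding exactly the factor $K_1(\bu,|\Omega|)M_0(\bu_{tt})$. In your version $\bw_h^{(n)}-\tilde\bw_h^{(n)}$ is only discretely divergence-free; to bound it you must write $\bw_h^{(n)}-\tilde\bw_h^{(n)}=(\bu^{(n)}-\tilde\bu^{(n)})+\sum \pm(\bw_h^{(j)}-\bu^{(j)})$, which produces an extra $O(h^r)$ contribution (harmless, absorbed into the spatial error) and a constant involving $\|\nabla\bw_h^{(n)}\|_\infty$ via~\eqref{cota_sh_inf_1} rather than the slightly cleaner $\|\bu\|_\infty$. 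Both decompositions lead to the stated theorem.

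One small correction: the first step of~\eqref{eq:method2_fully_im} is \emph{semi-implicit} Euler ($\tilde\bu_h^{(1)}=\bu_h^{(0)}$), not fully implicit, so~\eqref{eq:incial1} does not apply verbatim; you should invoke Lemma~\ref{lema_generalb} for $n=1$ (with $\tilde\be_h^{(1)}=\be_h^{(0)}$) to bound $a_1$. This introduces, besides $\|\be_h^{(0)}\|_0^2$, a $\delta\Delta t$-weighted $\|\nabla\be_h^{(0)}\|_0^2$ or $\|\nabla\cdot\be_h^{(0)}\|_0^2$ term, which is absorbed into the generic constant~$C$ in front of the exponentially decaying initial error.
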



\begin{remark}\label{re:beta_cond}{\rm  In Theorems~\ref{Th:main_muno0} and~\ref{th:main_bdf2} we assume $\beta\ge 8L$.
Let us observe that in the case $\mu=0$, in view of estimates \eqref{cota_sh_inf} and~\eqref{cota_sh_inf_1}, we have that $\beta\ge 8L$ when $\bw_h=\bs_h$ if
\begin{eqnarray}\label{eq:as1_u}
\beta \ge 8\left(2D_0\left((M_{d-1}(\bu)M_3(\bu))^{1/2} +\bigl(\overline{N}_2(\bu,p)\overline{N }_{d}(\bu,p)\bigr)^{1/2}\right)\nonumber\right.\\
\left.+D_0^2\frac{
M_{d-2}(\bu)M_2(\bu)+\overline{N}_1(\bu,p)\overline{N}_{d-1}(\bu,p)}{4\nu}\right),
\end{eqnarray}
with $D_0$ the constant in~(\ref{cota_sh_inf}--\ref{la_cota}).
In case $\mu\neq 0$ from \eqref{cotainfty1} and \eqref{cota_sh_inf_mu} we have that $\beta\ge 8L$  when $\bw_h=\bs_h^m$ if
\begin{equation}\label{eq:as1_muno0_u}
\beta\ge 16\left(D_1\sup_{\tau\ge 0}\|\nabla \bu(\tau)\|_\infty+D_1^2\frac{
M_{d-2}(\bu)M_2(\bu)}{4\mu} 
\right).
\end{equation}
In Theorem~\ref{th:main_bdf2_semimp} we assume $\beta\ge 8L'/\delta$ which leads to  assumptions on $\beta$ analogous to those above with obvious changes.}
\end{remark}
\begin{lema}\label{le:est-1b} Let $\bu^{(n)}$ and $\tilde \bu^{(n)}$ denote $\bu(t_n)$
and~$\tilde \bu(t_n)$, where $\tilde \bu$ is defined in \eqref{eq:lautilde}, and let~$\bw_h^{(n)}=\bs_h(t_n)$ when $\mu=0$
and~~$\bw_h^{(n)}=\bs_h^m(t_n)$. Then, the following bounds hold
\begin{align}
\label{eq:K_0b}
\sup_{\|\bvar\|_1=1}|& b_h(\tilde \bu^{(n)},\bu^{(n)},\bvar)-b_h(\tilde \bw_h^{(n)},\bw_h^{(n)},\bvar)|
\nonumber \\
&\le K_0(\bu,p,|\Omega|) \left(\|\tilde \bu^{(n)}-\tilde \bw_h^{(n)}\|_0+\|\bu^{(n)}-\bw_h^{(n)}\|_0\right),\quad
\mu=0,
\\
\sup_{\|\bvar\|_0=0}&| b_h(\tilde \bu^{(n)},\bu^{(n)},\bvar)-b_h(\tilde \bw_h^{(n)},\bw_h^{(n)},\bvar)| \nonumber\\
&\le K_1(\bu,|\Omega|) \left(\|\tilde \bu^{(n)}-\tilde\bw_h^{(n)}\|_1+\|\bu^{(n)}-\bw_h^{(n)}\|_1\right), \quad \mu>0,
\label{eq:K_1b}
\end{align}
 $K_0(\bu,p,|\Omega|)$ and~$K_1(\bu,|\Omega|)$ are the quantities
defined in~(\ref{Cup}) and~(\ref{Cu}), respectively.
\end{lema}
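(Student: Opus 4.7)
The plan is to reduce the proof to that of Lemma~\ref{le:est-1} (which is~\cite[Lemma~3.7]{Ours}) by means of the telescopic decomposition
\begin{equation*}
b_h(\tilde\bu^{(n)},\bu^{(n)},\bvar)-b_h(\tilde\bw_h^{(n)},\bw_h^{(n)},\bvar)
= b_h(\tilde\bu^{(n)}-\tilde\bw_h^{(n)},\bu^{(n)},\bvar)
+ b_h(\tilde\bw_h^{(n)},\bu^{(n)}-\bw_h^{(n)},\bvar),
\end{equation*}
which localises each of the two error factors into a single slot. Each trilinear form on the right-hand side is then of the same shape as those that arise in the proof of Lemma~\ref{le:est-1} for the fully-implicit case, with the sole difference that the ``smooth'' slot contains either $\bu^{(n)}$ or the linear combination $\tilde\bw_h^{(n)}=2\bw_h^{(n-1)}-\bw_h^{(n-2)}$ of Stokes projections.

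For the first term $b_h(\tilde\bu^{(n)}-\tilde\bw_h^{(n)},\bu^{(n)},\bvar)$ I would apply exactly the sequence of H\"older, Sobolev~\eqref{eq:parti_ineq} and Agmon~\eqref{eq:agmon} estimates of~\cite[Lemma~3.7]{Ours}, using the skew-symmetry property~\eqref{skew} and integration by parts to transfer derivatives off the error factor and onto~$\bvar$ (in the $\mu=0$ case, where we test with $\|\bvar\|_1=1$) or onto~$\bu^{(n)}$ (in the $\mu>0$ case, where we test with $\|\bvar\|_0=1$). Because the smooth slot is still the reference velocity $\bu^{(n)}$, this produces precisely the constants $K_0(\bu,p,|\Omega|)$ and $K_1(\bu,|\Omega|)$ from~\eqref{Cup}--\eqref{Cu}, multiplied by $\|\tilde\bu^{(n)}-\tilde\bw_h^{(n)}\|_0$ and $\|\tilde\bu^{(n)}-\tilde\bw_h^{(n)}\|_1$, respectively.

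For the second term $b_h(\tilde\bw_h^{(n)},\bu^{(n)}-\bw_h^{(n)},\bvar)$ I would run the same machinery, but the smooth slot is now $\tilde\bw_h^{(n)}$. The key observation is that since $\tilde\bw_h^{(n)}=2\bw_h^{(n-1)}-\bw_h^{(n-2)}$ is a linear combination of (modified) Stokes projections of the exact solution at two previous times, the bounds~\eqref{cota_sh_inf} and~\eqref{la_cota} when $\mu=0$, and~\eqref{cota_sh_inf_mu} and~\eqref{la_cota_mu} when $\mu>0$, apply term by term; taking suprema in time and absorbing the factor~$3$ into the multiplicative constant, one obtains the very same bounds on $\|\tilde\bw_h^{(n)}\|_\infty$ and $\|\nabla\tilde\bw_h^{(n)}\|_{L^{2d/(d-1)}}$ that were used on $\bs_h$ or $\bs_h^m$ in~\cite[Lemma~3.7]{Ours}. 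Consequently the second term is controlled by $K_0(\bu,p,|\Omega|)\|\bu^{(n)}-\bw_h^{(n)}\|_0$ and $K_1(\bu,|\Omega|)\|\bu^{(n)}-\bw_h^{(n)}\|_1$ respectively, and adding the two contributions yields~\eqref{eq:K_0b} and~\eqref{eq:K_1b}.

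The step that requires the most care is the second one: one must organise the integration by parts in the form
\begin{equation*}
b_h(\tilde\bw_h^{(n)},\bu^{(n)}-\bw_h^{(n)},\bvar) = \tfrac12((\tilde\bw_h^{(n)}\cdot\nabla)(\bu^{(n)}-\bw_h^{(n)}),\bvar) - \tfrac12((\tilde\bw_h^{(n)}\cdot\nabla)\bvar,\bu^{(n)}-\bw_h^{(n)})
\end{equation*}
and check that each resulting factor can be absorbed into the same $K_0$ and $K_1$ from Lemma~\ref{le:est-1}, using only the $L^\infty$ and $L^{2d/(d-1)}$ bounds that are inherited by $\tilde\bw_h^{(n)}$ from~\eqref{cota_sh_inf}--\eqref{la_cota} (or~\eqref{cota_sh_inf_mu}--\eqref{la_cota_mu}). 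Beyond this matching of norms, no new idea beyond the one already present in~\cite[Lemma~3.7]{Ours} is required.
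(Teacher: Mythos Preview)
Your proposal is correct and follows essentially the same route as the paper: the same telescopic splitting $b_h(\tilde\beps,\bu,\bvar)+b_h(\tilde\bw_h,\beps,\bvar)$, the same use of skew-symmetry and the H\"older/Sobolev/Agmon estimates from~\cite[Lemma~3.7]{Ours}, and the same key observation that $\tilde\bw_h^{(n)}=2\bw_h^{(n-1)}-\bw_h^{(n-2)}$ inherits the $L^\infty$ and $L^{2d/(d-1)}$ bounds~\eqref{cota_sh_inf}--\eqref{la_cota} (or~\eqref{cota_sh_inf_mu}--\eqref{la_cota_mu}). The only cosmetic difference is that the paper handles $b_h(\tilde\bw_h,\beps,\bvar)$ by first applying skew-symmetry to get $-b_h(\tilde\bw_h,\bvar,\beps)$ and then bounding the two pieces of the definition of~$b_h$, whereas you write out the equivalent symmetric $\tfrac12$--$\tfrac12$ form directly.
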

\begin{proof}
For simplicity, we denote $\beps=\bu^{(n)}  - \bw_h^{(n)}$ and $\tilde\beps =
\tilde \bu^{(n)}  - \tilde \bw_h^{(n)}$, and drop the explicit dependence on~$n$.
Adding $\pm b_h(\bw_h,\bu,\bvar)$ we have
\begin{equation}
\label{coneps1}
| b_h(\tilde \bu,\bu,\bvar)-b_h(\tilde \bw_h,\bw_h,\bvar)| \le
|b_h(\tilde \bw_h,\beps,\bvar)| + | b_h(\tilde \beps,\bu,\bvar)|.
\end{equation}
For the first term on the right-hand side above, we use the skew-symmetry
property of~$b_h$ to interchange the roles of~$\beps$ and~$\bvar$, so that arguing
as in~\cite[Lemma~5]{proyNS} we have
\begin{equation}
\label{contilde}
|b_h(\tilde \bw_h,\bvar,\beps)| \le  
\|\tilde \bw_h\|_\infty\|\beps\|_0\|\nabla \bvar\|_0+
C\|\nabla\tilde \bw_h\|_{L^{2d/(d-1)}}\|\beps\|_0\|\bvar\|_{L^{2d}},
\end{equation}
and, for the second term on the right-hand side of~(\ref{coneps1}) we write
$$
  b_h(\tilde \beps,\bu,\bvar) =\frac{1}{2}\left((\tilde\beps\cdot\nabla \bu,\bvar) -(\tilde\beps\cdot\nabla \bvar,\bu)\right),
$$
so that we have
\begin{align*}
 | b_h(\tilde \beps,\bu,\bvar)| \le& \frac{1}{2}\|\tilde\beps \|_0\|\nabla \bu\|_{L^{2d/(d-1)}}
 \|\bvar\|_{L^{2d}} + \frac{1}{2}\|\tilde\beps \|_0 \|\bu\|_{\infty} \|\nabla\bvar\|_0.
\end{align*}
To bound~$\|\nabla\bu\|_{L^{2d/(d-1)}}$ and~$\|\bu\|_\infty$ we apply \eqref{eq:parti_ineq} and~\eqref{eq:agmon}, respectively,
and applying~Sobolev's inequality~\eqref{sob1} we have~$\|\bvar\|_{L^{2d}}\le c_1|\Omega|^{(3-d)/(2d)}\|\bvar\|_1$.
Thus, the proof of~(\ref{eq:K_0b}) will be complete after bounding the factors
$\|\tilde \bw_h\|_\infty$ and $\|\nabla\tilde \bw_h\|_{L^{2d/(d-1)}}$
featuring in~(\ref{contilde}), but an upper bound of those factors follows directly
from~(\ref{cota_sh_inf}) and~(\ref{la_cota}).

To prove~(\ref{eq:K_1b}),  we interchange the roles of~$\varphi$ and~$\beps$ and
$\tilde\beps$
 in the arguments above, and using estimates~\eqref{cota_sh_inf_mu}
and~\eqref{la_cota_mu} to bound~$\|\tilde \bw_h\|_\infty$ and~$\|\nabla\tilde \bw_h\|_{L^{2d/(d-1)}}$, respectively.
\end{proof}

 \begin{lema}\label{le:C0C1_estb} For $C_0'$ and~$C_1'$ defined in~(\ref{C0'}) and~(\ref{C0'}), respectively,
 the following bounds hold
\begin{align}
\label{eq:cotaC0'}
C_0' \le & \frac{C}{\nu}\left(M_{-1}^2(\bu_{ttt})+K_1^2(\bu,|\Omega|)M_0^2(\bu_{tt})\right)(\Delta t)^4
\nonumber\\
&{}+ \hat C_0^2(r,\beta,\nu,|\Omega|,\bu,p)h^{2r}
\\
\label{eq:cotaC_1'}
C_1' \le & \frac{C}{L}\left(M_{0}^2(\bu_{ttt})+K_1^2(\bu,|\Omega|)M_1^2(\bu_{tt})\right)(\Delta t)^4
\nonumber\\
&{}
+\hat C_1^2(r,h,\beta,\mu,|\Omega|,\bu,p) h^{2r-2}
\end{align}
where~$\hat C_0$ and~$\hat C_1$ are the constants defined in~(\ref{hatC0}),
(\ref{hatC1}), respectively, and~$K_1$ is the constant defined in~\eqref{Cu}.
\end{lema}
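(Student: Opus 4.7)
The plan is to follow the structure of the proof of Lemma~\ref{le:C0C1_est}, accommodating the two new features of the semi-implicit BDF2 truncation error $\btau_h^{(n)}$ defined in~\eqref{tau1b}: the time discretization $d_t=d_t^2$ now delivers a consistency error of order $(\Delta t)^2$ (so, squared, of order $(\Delta t)^4$), and the nonlinear contribution involves the extrapolation $\tilde\bw_h^{(n)}$ rather than $\bw_h^{(n)}$. All spatial contributions, in particular $\|\theta_h^{(n)}\|_0$ and $\|\bu^{(n)}-\bw_h^{(n)}\|_0$, are bounded exactly as in Lemma~\ref{le:C0C1_est} using~\eqref{stokespro}, \eqref{stokespro_mod} and~\eqref{eq:L2p}, which produce the $\hat C_0^2 h^{2r}$ and $\hat C_1^2 h^{2r-2}$ terms in~\eqref{eq:cotaC0'}--\eqref{eq:cotaC_1'}. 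What requires new work is the estimation of $\|\btau_h^{(n)}\|_{-1}$ and $\|\btau_h^{(n)}\|_0$.

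First I would split $\btau_h^{(n)} = \btau_{h,\mathrm{t}}^{(n)} + \btau_{h,\mathrm{nl}}^{(n)}$, where $\btau_{h,\mathrm{t}}^{(n)}=\bu_t(t_n)-d_t^2\bw_h^{(n)}$ collects the time discretization error and $\btau_{h,\mathrm{nl}}^{(n)}$ is the convective remainder $b_h(\bu^{(n)},\bu^{(n)},\cdot)-b_h(\tilde\bw_h^{(n)},\bw_h^{(n)},\cdot)$. For $\btau_{h,\mathrm{t}}^{(n)}$ I would further decompose it as in~\eqref{eq:u_t-d_t}, bound the pure time-truncation piece by~\eqref{dt_trunc_bdf2} (yielding $M_{-1}^2(\bu_{ttt})(\Delta t)^4$ or $M_0^2(\bu_{ttt})(\Delta t)^4$), and bound the difference quotient of $\bu-\bw_h$ by the integral-of-the-derivative estimates~\eqref{eq:stokes_menos1}/\eqref{stokespro} or~\eqref{stokespro_mod}, producing the usual $h^{2r}$/$h^{2r-2}$ spatial tails that feed into $\hat C_0^2$, $\hat C_1^2$.

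For the nonlinear piece I would insert $\pm b_h(\tilde\bu^{(n)},\bu^{(n)},\cdot)$ and rewrite
\[
\btau_{h,\mathrm{nl}}^{(n)} = b_h(\bu^{(n)}-\tilde\bu^{(n)},\bu^{(n)},\cdot)\;+\;\bigl(b_h(\tilde\bu^{(n)},\bu^{(n)},\cdot)-b_h(\tilde\bw_h^{(n)},\bw_h^{(n)},\cdot)\bigr).
\]
The second bracket is exactly the quantity controlled by Lemma~\ref{le:est-1b}, and, together with~\eqref{stokespro}/\eqref{stokespro_mod}, contributes the $K_0^2 \overline N_r^2 h^{2r}$ (resp.\ $K_1^2 M_r^2 h^{2r-2}$) term already present in $\hat C_0^2$ (resp.\ $\hat C_1^2$). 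The first term needs direct estimation. Since $\nabla\cdot(\bu-\tilde\bu)=0$, we have $b_h(\bu-\tilde\bu,\bu,\bvar)=((\bu-\tilde\bu)\cdot\nabla\bu,\bvar)$. In the $\mu=0$ case I would test against $\bvar$ with $\|\bvar\|_1=1$ and apply H\"older together with~\eqref{sob1} and~\eqref{eq:parti_ineq} to obtain
\[
|b_h(\bu-\tilde\bu,\bu,\bvar)|\le C\,\|\bu-\tilde\bu\|_0\,\bigl(M_1(\bu)M_2(\bu)\bigr)^{1/2}|\Omega|^{(3-d)/(2d)}\|\bvar\|_1,
\]
whereas for $\mu>0$ I would test against $\bvar$ with $\|\bvar\|_0=1$ and estimate $\|(\bu-\tilde\bu)\cdot\nabla\bu\|_0$ via $\|\bu-\tilde\bu\|_{L^{2d}}\|\nabla\bu\|_{L^{2d/(d-1)}}$, Sobolev-embedding $\|\bu-\tilde\bu\|_{L^{2d}}\le C|\Omega|^{(3-d)/(2d)}\|\bu-\tilde\bu\|_1$. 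In both cases $K_1(\bu,|\Omega|)$ (as defined in~\eqref{Cu}) appears as the controlling constant.

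It remains to bound $\|\bu^{(n)}-\tilde\bu^{(n)}\|_j$. Writing
\[
\bu(t_n)-\tilde\bu(t_n)=\bu(t_n)-2\bu(t_{n-1})+\bu(t_{n-2})=\int_{t_{n-1}}^{t_n}\!\!\int_{s-\Delta t}^{s}\bu_{tt}(r)\,dr\,ds,
\]
I would deduce $\|\bu-\tilde\bu\|_j\le M_j(\bu_{tt})(\Delta t)^2$ for $j=0,1$, which, combined with the previous step, yields a contribution of $CK_1^2(\bu,|\Omega|)M_0^2(\bu_{tt})(\Delta t)^4$ to $\|\btau_h^{(n)}\|_{-1}^2$ and $CK_1^2(\bu,|\Omega|)M_1^2(\bu_{tt})(\Delta t)^4$ to $\|\btau_h^{(n)}\|_0^2$. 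Inserting everything into~\eqref{C0'} and~\eqref{C1'}, absorbing the prefactors $2\hat c_P/\nu$ and $1/L'$ respectively, and repackaging the spatial tails into $\hat C_0^2 h^{2r}$ and $\hat C_1^2 h^{2r-2}$ as in Lemma~\ref{le:C0C1_est} gives~\eqref{eq:cotaC0'}--\eqref{eq:cotaC_1'}. The main bookkeeping difficulty, and what I expect to be the only genuinely delicate step, is keeping the Sobolev exponents and the $|\Omega|$-powers of $K_1(\bu,|\Omega|)$ consistent in the two cases $\mu=0$ and $\mu>0$, so that the constant emerging from the $b_h(\bu-\tilde\bu,\bu,\cdot)$ term matches precisely the $K_1$ appearing in the statement.
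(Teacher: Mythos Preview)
Your proposal is correct and follows essentially the same route as the paper: the same splitting $b_h(\bu^{(n)},\bu^{(n)},\cdot)-b_h(\tilde\bw_h^{(n)},\bw_h^{(n)},\cdot)=b_h(\bu^{(n)}-\tilde\bu^{(n)},\bu^{(n)},\cdot)+\bigl(b_h(\tilde\bu^{(n)},\bu^{(n)},\cdot)-b_h(\tilde\bw_h^{(n)},\bw_h^{(n)},\cdot)\bigr)$, the same appeal to Lemma~\ref{le:est-1b} for the second piece, and the same Taylor estimate $\|\bu^{(n)}-\tilde\bu^{(n)}\|_j\le CM_j(\bu_{tt})(\Delta t)^2$. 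Two small points: for the $H^{-1}$ bound of $b_h(\bu-\tilde\bu,\bu,\bvar)$ the paper uses skew-symmetry and Agmon (producing the $(M_{d-2}M_2)^{1/2}$ piece of~$K_1$) rather than your H\"older pairing (which produces the other piece), but both are dominated by~$K_1$; and to finish you still need the observation $L/2\le L'\le L$ (immediate from~\eqref{eq:L1b} and~(\ref{eq:L}--\ref{eq:L_muno0}), (\ref{eq:Lb}--\ref{eq:L_muno0b})) so that the prefactor $1/L'$ in~\eqref{C1'} can be replaced by~$2/L$ as stated in~\eqref{eq:cotaC_1'}.
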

\begin{proof}
In view of how $\hat L$ is defined in~\eqref{eq:L1b} and how~$L$ and $L'$ are defined in~(\ref{eq:L}--\ref{eq:L_muno0})
and~(\ref{eq:Lb}--\ref{eq:L_muno0b}), respectively,
we have that
$$
 L/2 \le L'\le L,
$$
so that the factor $1/L'$ in~(\ref{C1'}) can be bounded by~$2/L$.

The estimation of~$\bu_t^{(n)} - d_t \bu^{(n)}$ is that of the fully implicit BDF2,
so that it gives rise to the terms involving time derivatives of~$\bu$ in the estimates
in Theorem~\ref{th:main_bdf2}. The rest of the terms in~$C_0'$ and~$C_1'$ have
already been estimated in~Lemma~\ref{le:C0C1_est}, except the second
term on the right-hand side of~\eqref{tau1b}. To estimate this term, with $\tilde \bu$ defined in \eqref{eq:lautilde},
we write
\begin{align}
\label{eq:truncb}
&b_h(\bu^{(n)},\bu^{(n)},\bvar_h)-b_h(\tilde \bw_h^{(n)},\bw_h^{(n)},\bvar_h)\\
&\quad{}=b_h(\bu^{(n)}-\tilde \bu^{(n)},\bu^{(n)},\bvar_h)
+b_h(\tilde \bu^{(n)},\bu^{(n)},\bvar_h)
-b_h(\tilde \bw_h^{(n)},\bw_h^{(n)},\bvar_h).\nonumber
\end{align}
The second term on the right-hand side above is estimated in
Lemma~\ref{le:est-1b}.
For the first one, to obtain the corresponding estimate in $H^{-1}$ we proceed
as follows.
For~$\bvar\in H^{1}_0(\Omega)^d$, using the skew-symmetry property of~$b_h$
we write
\begin{align*}
\left|b_h(\bu^{(n)}-\tilde \bu^{(n)},\bu^{(n)},\bvar)\right| =&
\left|b_h(\bu^{(n)}-\tilde \bu^{(n)},\bvar,\bu^{(n)})\right|
\nonumber\\
{}\le & \|\bu^{(n)}-\tilde \bu^{(n)}\|_0\|\nabla\bvar\|_0\| \|\bu^{(n)}\|_\infty
\nonumber\\
{}\le &  \|\bu^{(n)}-\tilde \bu^{(n)}\|_0\|\nabla\bvar\|_0 c_A(M_{d-2}(\bu)M_2(\bu))^{1/2},
\end{align*}
where in the last inequality we have applied~\eqref{eq:agmon}.
Now Taylor expansion easily reveals that $\|\bu^{(n)}-\tilde \bu^{(n)}\|_0
\le CM_0(\bu_{tt})(\Delta t)^2$ and the proof of~\eqref{eq:cotaC0'} is finished.
To obtain an $L^2$-estimate corresponding to the first term on the right-hand side of~\eqref{eq:truncb}, for
$\bvar\in L^2(\Omega)^d$ we write
\begin{align*}
\left|b_h(\bu^{(n)}-\tilde \bu^{(n)},\bu^{(n)},\bvar)\right|
\le \|\bu^{(n)}-\tilde \bu^{(n)}\|_{L^{2d}}
 \|\nabla\bu\|_{L^{2d/(d-1)}} \|\bvar\|_{0}.
\end{align*}
To bound~$\|\nabla\bu\|_{L^{2d/(d-1)}}$ we apply \eqref{eq:parti_ineq},
and applying~Sobolev's inequality~\eqref{sob1} we have
$$
\|\bu^{(n)}-\tilde \bu^{(n)}\|_{L^{2d}}\le c_1|\Omega|^{(3-d)/(2d)}\|\bu^{(n)}-\tilde \bu^{(n)}\|_1,
$$
and the proof of~\eqref{eq:cotaC_1'} is finished.

\end{proof}

\subsection{The Lagrange interpolant}\label{sub_la}

In this section we consider the case in which $I_H \bu=I_H^{La} \bu$ is the Lagrange interpolant. The proof of the following lemmas can be found in \cite[Lemma 310, Lemma 3.11]{Ours}
\begin{lema}
Let $\bv_h\in X_{h,r}$ then the following bound holds
\begin{equation}\label{eq:cotainter_la2}
\|\bv_h-I_H^{La}\bv_h\|_0\le c_{La} H\|\nabla \bv_h\|_0,
\end{equation}
where
\begin{equation}\label{la_c_La}
c_{La}= C\left({H}/{h}\right)^{\frac{d(p-2)}{2p}},
\end{equation}
where $C$ is a generic constant and $p=3$ if $d=2$ and $p=4$ if $d=3$.
\end{lema}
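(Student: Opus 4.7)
The plan is to combine the standard Lagrange interpolation error estimate on the coarse mesh, which naturally lives in $W^{1,p}$ for some $p>d$, with an inverse inequality on the fine mesh to convert back to $L^2$. The exponent $d(p-2)/(2p)$ suggests $p=3$ in $d=2$ and $p=4$ in $d=3$, i.e.\ the smallest integer strictly greater than $d$, chosen so that Morrey's embedding $W^{1,p}\hookrightarrow \mathcal{C}^0$ makes Lagrange interpolation well-defined.

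First, on each coarse simplex $K$ of diameter $\sim H$, by the standard Bramble--Hilbert argument plus scaling applied to the Lagrange interpolant on the reference element (which preserves polynomials of degree $r-1\ge 1$), one has
\begin{equation*}
\|\bv_h - I_H^{La}\bv_h\|_{L^2(K)}\le C\, H^{1 + d/2 - d/p}\,\|\nabla \bv_h\|_{L^p(K)}.
\end{equation*}
Next, on each fine element $\tau\subset K$ of diameter $\sim h$, since $\nabla\bv_h$ is a polynomial of fixed degree on $\tau$, the local inverse estimate gives $\|\nabla \bv_h\|_{L^p(\tau)}\le C\, h^{d/p-d/2}\|\nabla \bv_h\|_{L^2(\tau)}$. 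Raising to the $p$-th power and summing over the fine elements inside $K$, together with the inclusion $\ell^p\subset\ell^2$ for $p\ge 2$ (which yields $\sum_{\tau\subset K}a_\tau^p \le (\sum_{\tau\subset K}a_\tau^2)^{p/2}$), produces the key hybrid estimate
\begin{equation*}
\|\nabla \bv_h\|_{L^p(K)}\le C\, h^{d/p-d/2}\,\|\nabla \bv_h\|_{L^2(K)}.
\end{equation*}

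Substituting this back into the interpolation bound on $K$, squaring, and summing over all coarse elements (note the $L^2$ norms do sum additively, so no further discrete H\"older loss is incurred), one obtains
\begin{equation*}
\|\bv_h-I_H^{La}\bv_h\|_0^2 \le C\,H^{2+d-2d/p}\, h^{2d/p-d}\sum_K \|\nabla \bv_h\|_{L^2(K)}^2 = C\,H^2 (H/h)^{d-2d/p}\|\nabla \bv_h\|_0^2,
\end{equation*}
and after taking square roots and noting $d/2-d/p = d(p-2)/(2p)$, the desired bound with $c_{La} = C(H/h)^{d(p-2)/(2p)}$ follows.

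The main obstacle is the order in which one performs the two summations. If one sums the local interpolation bound over coarse elements first and only then invokes an $\ell^2\to\ell^p$ discrete H\"older inequality on the coarse level, an additional factor of $|\Omega|^{1/2-1/p}$ creeps in and degrades the constant to $C|\Omega|^{(p-2)/(2p)} h^{-d(p-2)/(2p)}$, which is larger than the claimed $C(H/h)^{d(p-2)/(2p)}$ whenever $H<|\Omega|^{1/d}$. The sharp form is recovered precisely by doing the $\ell^p\hookrightarrow\ell^2$ step on the fine elements contained in a single coarse element, where the number of elements is quasi-uniformly controlled by $(H/h)^d$, so that the only geometric factor that survives is the refinement ratio $H/h$ rather than a global Lebesgue measure.
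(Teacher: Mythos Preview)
Your argument is correct and follows the natural route: a $W^{1,p}$ Lagrange interpolation estimate on each coarse element (with $p>d$ so that the nodal interpolant is well defined), combined with a local inverse inequality on the fine elements and the discrete embedding $\ell^2\hookrightarrow\ell^p$ summed over the fine elements inside a single coarse element. This is essentially the approach of the reference the paper cites for this lemma (the paper itself gives no proof here but defers to \cite[Lemma~3.10]{Ours}); your final paragraph on why the summation must be done coarse-element-by-coarse-element, rather than globally, is exactly the point that produces the factor $(H/h)^{d(p-2)/(2p)}$ instead of a global measure factor.

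Two small remarks. First, you implicitly assume that each fine element lies in a single coarse element (i.e., the fine mesh refines the coarse one); this is the intended setting and is needed for the sum $\sum_{\tau\subset K}$ to make sense. Second, the phrase ``inclusion $\ell^p\subset\ell^2$'' is backwards as written---for $p\ge 2$ one has $\ell^2\subset\ell^p$ with $\|a\|_{\ell^p}\le\|a\|_{\ell^2}$---but the inequality you actually use, $\sum_\tau a_\tau^p\le(\sum_\tau a_\tau^2)^{p/2}$, is correct.
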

\begin{lema}\label{le:(I-I_h)(u-s_h)} Let $\bw_h$ be the Stokes projection defined in \eqref{stokesnew} in case $\mu=0$ or the modified Stokes projection defined in \eqref{stokespro_mod_def} in case $\mu>0$. Then, the following bounds hold
\begin{eqnarray*}\label{eq:trun_la_stokes}
\|(I-I_H^{La})(\bw_h-\bu)\|_0&\le& C H^2 h^{r-2}{N}_r(\bu,p),\quad \mu=0\\
\|(I-I_H^{La})(\bw_h-\bu)\|_0&\le& C H^2 h^{r-2}\|\bu\|_r,\quad \mu>0.
\end{eqnarray*}
where $C$ is a generic constant.
\end{lema}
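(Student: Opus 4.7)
The plan is to introduce the fine-scale Lagrange interpolant $I_h^{La}\bu$ as an intermediate and split
\begin{equation*}
(I-I_H^{La})(\bw_h-\bu)=(I-I_H^{La})(\bw_h-I_h^{La}\bu)+(I-I_H^{La})(I_h^{La}\bu-\bu),
\end{equation*}
handling each piece with a different ingredient.

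For the first piece, the key observation is that $\bw_h-I_h^{La}\bu\in X_{h,r}$, so the preceding lemma's inequality \eqref{eq:cotainter_la2} applies and produces
\begin{equation*}
\|(I-I_H^{La})(\bw_h-I_h^{La}\bu)\|_0\le c_{La}H\,\|\nabla(\bw_h-I_h^{La}\bu)\|_0.
\end{equation*}
Adding and subtracting $\bu$ inside the gradient, the triangle inequality together with the Stokes projection error \eqref{stokespro} (when $\mu=0$) or its modified counterpart \eqref{stokespro_mod} (when $\mu>0$), and the standard $H^1$ estimate for $\bu-I_h^{La}\bu$, yields $\|\nabla(\bw_h-I_h^{La}\bu)\|_0\le C h^{r-1}N_r(\bu,p)$ (respectively $C h^{r-1}\|\bu\|_r$). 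Because the exponent $d(p-2)/(2p)$ appearing in \eqref{la_c_La} is at most $1$ for $d\in\{2,3\}$, and since $h\le H$ in a data-assimilation setting, $c_{La}\le C(H/h)$; multiplying out the factors produces the bound $CH^2h^{r-2}N_r(\bu,p)$ (respectively $CH^2h^{r-2}\|\bu\|_r$) for the first contribution.

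For the second piece I would argue under the natural assumption that the coarse mesh is a submesh of the fine mesh, so that each coarse node is also a fine node. Then $I_h^{La}\bu-\bu$ vanishes at every coarse node, which forces $I_H^{La}(I_h^{La}\bu-\bu)=0$, and this piece reduces to $I_h^{La}\bu-\bu$; its $L^2$-norm is bounded by $Ch^r\|\bu\|_r\le CH^2h^{r-2}\|\bu\|_r$ because $h\le H$. Combining the two contributions yields both inequalities claimed in the lemma.

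The hard part will be pinning down the scaling $c_{La}H\le CH^2/h$ cleanly and, more importantly, handling the non-nested case, where $I_H^{La}(I_h^{La}\bu-\bu)$ no longer vanishes; in that situation one would bound it by $C\,\|I_h^{La}\bu-\bu\|_\infty$ via the $L^\infty$-stability of $I_H^{La}$, and the classical $L^\infty$ interpolation estimate still keeps this piece of order $h^r$ under the regularity of $\bu$ available in this section. Apart from these two technical points, the rest of the argument is a routine combination of triangle inequalities with the Stokes projection estimates already established in Section~\ref{Se:prelim}.
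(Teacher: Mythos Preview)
The paper does not give its own proof of this lemma; it simply cites \cite[Lemma~3.11]{Ours}, so there is no in-paper argument to compare against. Your approach is correct and is the natural one: split through the fine-scale Lagrange interpolant $I_h^{La}\bu$, apply \eqref{eq:cotainter_la2} to the discrete piece $\bw_h-I_h^{La}\bu\in X_{h,r}$, and bound $\|\nabla(\bw_h-I_h^{La}\bu)\|_0$ by triangle inequality together with \eqref{stokespro} (or \eqref{stokespro_mod}) and the standard $H^1$ Lagrange interpolation estimate. The passage $c_{La}H\,h^{r-1}\le C\,H^2h^{r-2}$ is indeed justified by $h\le H$ and $(H/h)^\alpha\le H/h$ since the exponent $\alpha=d(p-2)/(2p)\in\{1/3,3/4\}$ is below~$1$; and the second piece collapses to $\|I_h^{La}\bu-\bu\|_0\le Ch^r\|\bu\|_r\le CH^2h^{r-2}\|\bu\|_r$ under the nested-mesh hypothesis, which is consistent with the standing assumption in Subsection~\ref{sub_la} that $H/h$ is bounded (and with $H=3h$ in the numerical section).

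One small caveat on your non-nested fallback: bounding $\|I_H^{La}(I_h^{La}\bu-\bu)\|_0$ via $L^\infty$-stability of $I_H^{La}$ together with the classical $L^\infty$ interpolation error $\|I_h^{La}\bu-\bu\|_\infty=O(h^r)$ requires $\bu\in W^{r,\infty}$, which is strictly more than the $H^r$ regularity implicit in $N_r(\bu,p)$ or $\|\bu\|_r$. In the nested case this issue does not arise, so the lemma as stated is fully covered by your main argument.
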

Assuming $H/h$ remains bounded one can apply \eqref{eq:cotainter_la2} instead of \eqref{eq:cotainter}. Arguing as in \cite[Theorem 3.12]{Ours}
and applying Lemma~\ref{le:(I-I_h)(u-s_h)} one can replace the term $\frac{\beta }{2}c_0^2\|\bu^{(n)}-\bw_h^{(n)}\|_0^2$ in Lemmas~\ref{lema_general} and~\ref{lema_generalb} by
$\frac{\beta}{2}(C h^r \overline{N}_r(\bu,p)+\|\bu^{(n)}-\bw_h^{(n)}\|_0)^2$ in case $\mu=0$ and by
$\frac{\beta}{2}(C h^r {M}_r(\bu)+\|\bu^{(n)}-\bw_h^{(n)}\|_0)^2$ in case $\mu>0$. Then, Theorems~\ref{Th:main_muno0}, \ref{th:main_bdf2}
and \ref{th:main_bdf2_semimp}  hold  with obvious changes.

\section{Numerical experiments}\label{se:num}

We present some numerical experiments to check the results of the previous section.
Following standard practice, we use an example with a known solution. In particular, we consider
 the Navier-Stokes equations in $\Omega=[0,1]^2$, with the forcing term~$\bff$ chosen so that the solution $\bu$ and~$p$ are given by
\begin{eqnarray}
\label{eq:exactau}
\bu(x,y,t)&=& \frac{6+4\cos(4t)}{10} \left[\begin{array}{c} 8\sin^2(\pi x) (2y(1-y)(1-2y)\\
-8\pi\sin(2*\pi x) (y(1-y))^2\end{array}\right]\\
p(x,y,t)&=&\frac{6+4\cos(4t)}{10} \sin(\pi x)\cos(\pi y).
\label{eq:exactap}
\end{eqnarray}
In the results below, the spatial discretization was done with $P_2/P_1$ elements
on regular triangulations with SW-NE diagonals. For the coarse mesh interpolation we take the Cl\'ement interpolant on piecewise
constants. The time integration was done with semi-implicit BDF2.
In
what follows the initial condition was set to~$\bu_h={\bf 0}$ and~$p=0$, so that there is
an $O(1)$ error at time $t=0$. The value of the nudging parameter was set to~$\beta=1$.

For $\nu=10^{-2}$, $10^{-4}$, $10^{-6}$ and~$10^{-8}$, we computed approximations on triangulations with mesh size $h=1/12$, $h=1/24$ and~$h=1/48$, and different values of
$\Delta t$, while the coarse mesh size was set to~$H=3h$. In Fig.~\ref{fig0} we show
relative errors in velocity for $\nu=10^{-6}$, $\mu=0.05$, corresponding to
different combinations of $\Delta t$ and~$h$, plotted against~$t$.
Recall the error bound for $\mu>0$ in Theorem~\ref{th:main_bdf2_semimp}, where on
the right-hand side we have the initial error times a term decaying exponentially,
an~$O((\Delta t)^2)$ term and an $O(h^{r-1})$ term ($r=3$ in the present case).
\begin{figure}[h]
\begin{center}
\includegraphics[height=5truecm]{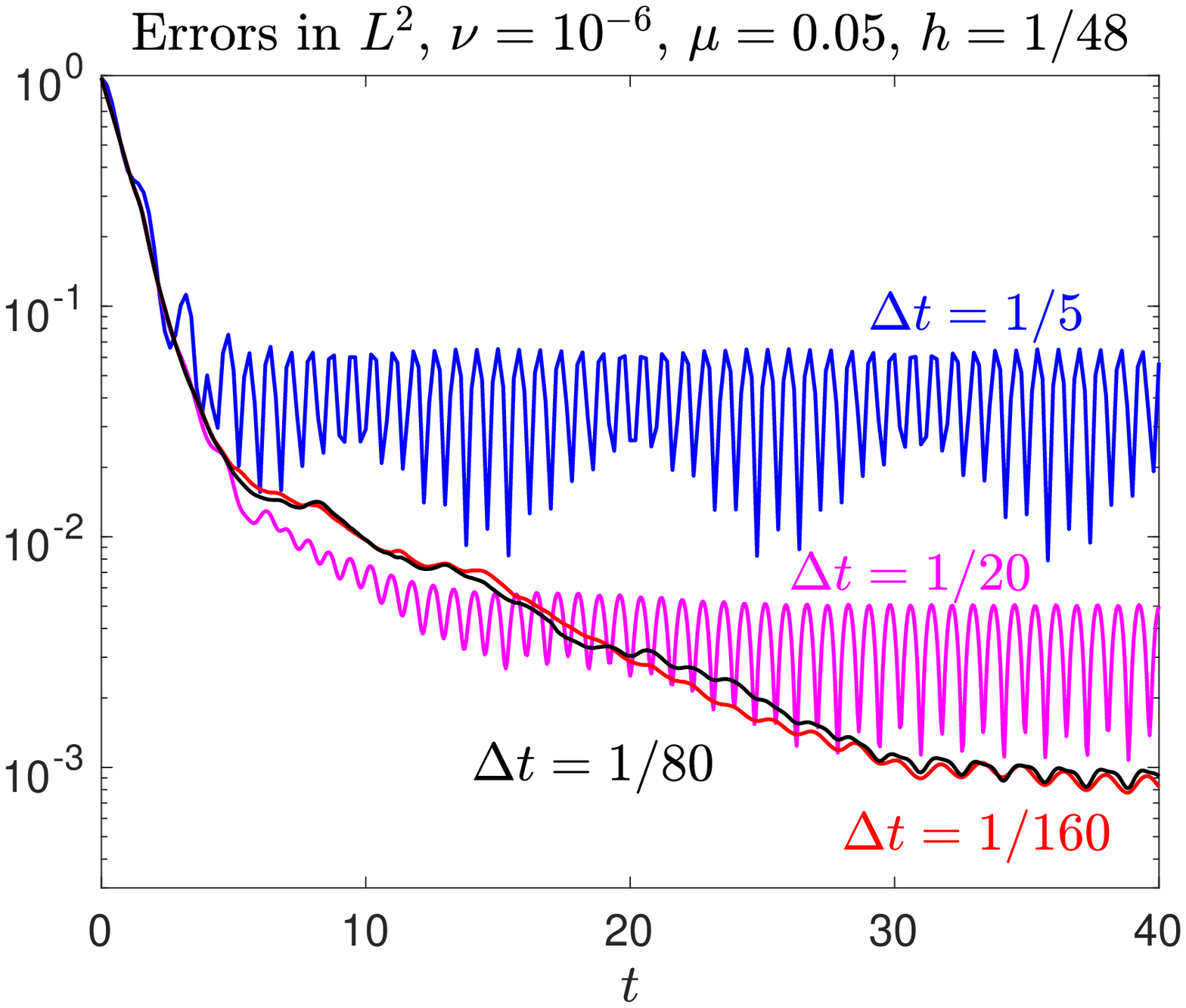}\,
\includegraphics[height=5truecm]{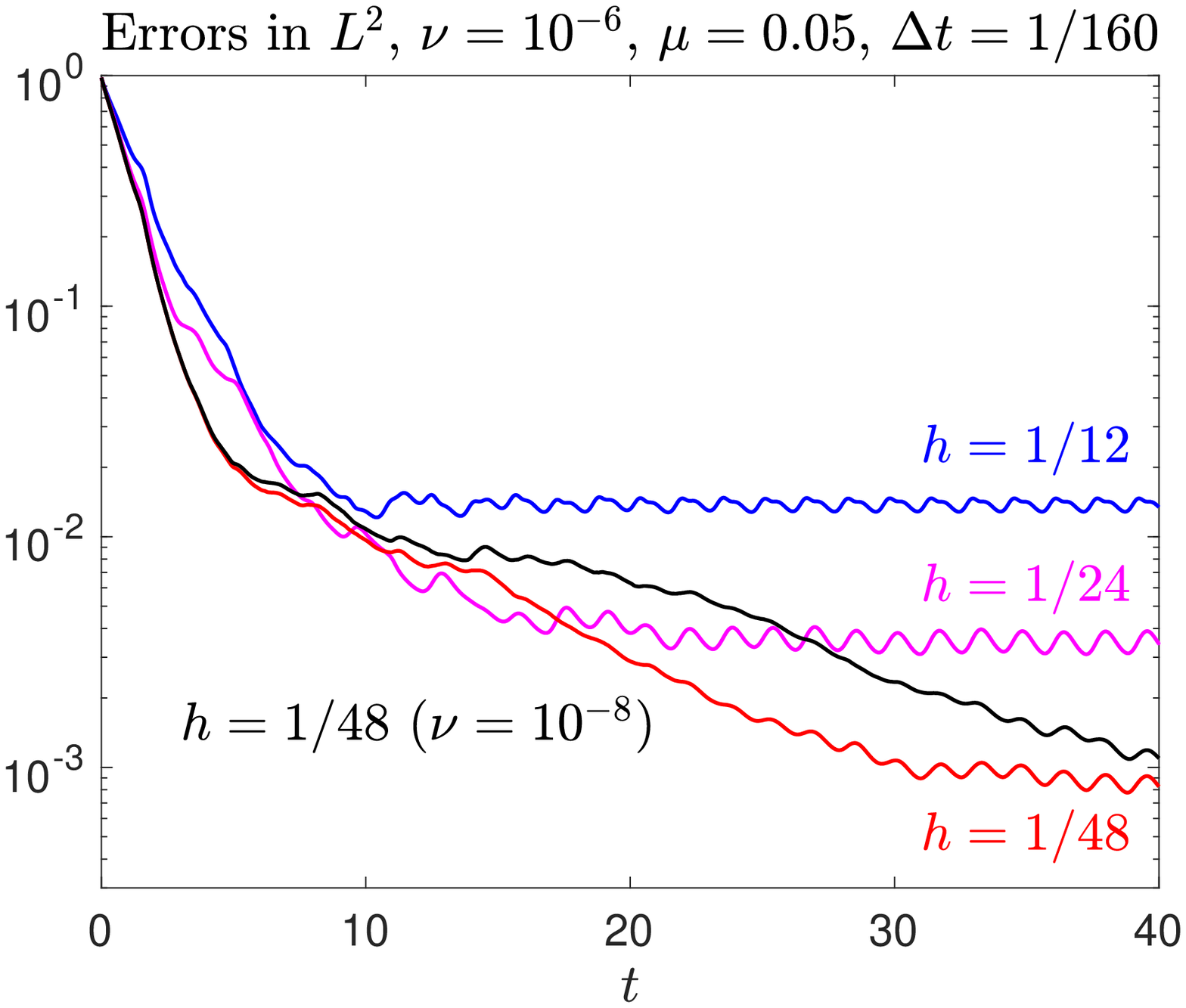}
\end{center}
\caption{Velocity errors vs $t$ for~$\nu=10^{-6}$.\label{fig0}}
\end{figure}
In Fig.~\ref{fig0}, it can be seen how the error at the initial time is equal
to~$1$, decays exponentially with time until reaching the asymptotic regime, where,
in this example, its value oscillates periodically. On
the left plot, we show the errors for $h=1/48$ and decreasing values of~$\Delta t$.
In the asymptotic regime, the $O((\Delta t)^2)$ term dominates the error for the
two largest values of $\Delta t$. For the two smallest values of~$\Delta t$, the errors
are almost identical in the asymptotic regime, meaning that it is the $O(h^2)$ term
that dominates the error.
On the right plot in Fig.~\ref{fig0}, on the contrary, we show the errors for different values of~$h$ but with $\Delta t$ fixed to
$\Delta t =1/160$, so that the $O(h^2)$ term in the error is dominant in the asymptotic regime. Observe that, for~$h=1/48$, the asymptotic regime is not reached until $t=35$
approximately.
 We obtained similar figures (not shown here) for the rest of the values of
$\nu$, and in all of them we observed that the asymptotic regime is already reached
by $t=35$, except for $\nu=10^{-8}$ and~$h=1/48$ (also shown in~Fig~\ref{fig0}), where it was not reached until
$t=42$. For this reason, in the figures that follow, we computed the maximum
value of the $L^2$ errors in velocity for values of $t_n$ in the interval~$[35, 40]$,
except for~$\nu=10^{-8}$ and $h=1/48$, which they were on the interval~$[42,45]$.

 In Fig.~\ref{fig1} we
present velocity errors in~$L^2$ for the four values of~$\nu$.
For every mesh, the errors obtained with the different values of~$\Delta t$
are plotted with crosses for the results corresponding to~$\mu=0.05$ and
with circular bullets for those corresponding to~$\mu=0$, and, for each mesh,
the results of the different values of~$\Delta t$ are
joined by straight segments, with continuous lines for $\mu=0.05$
and discontinuous lines for~$\mu=0$.
\begin{figure}[h]
\begin{center}
\includegraphics[height=5truecm]{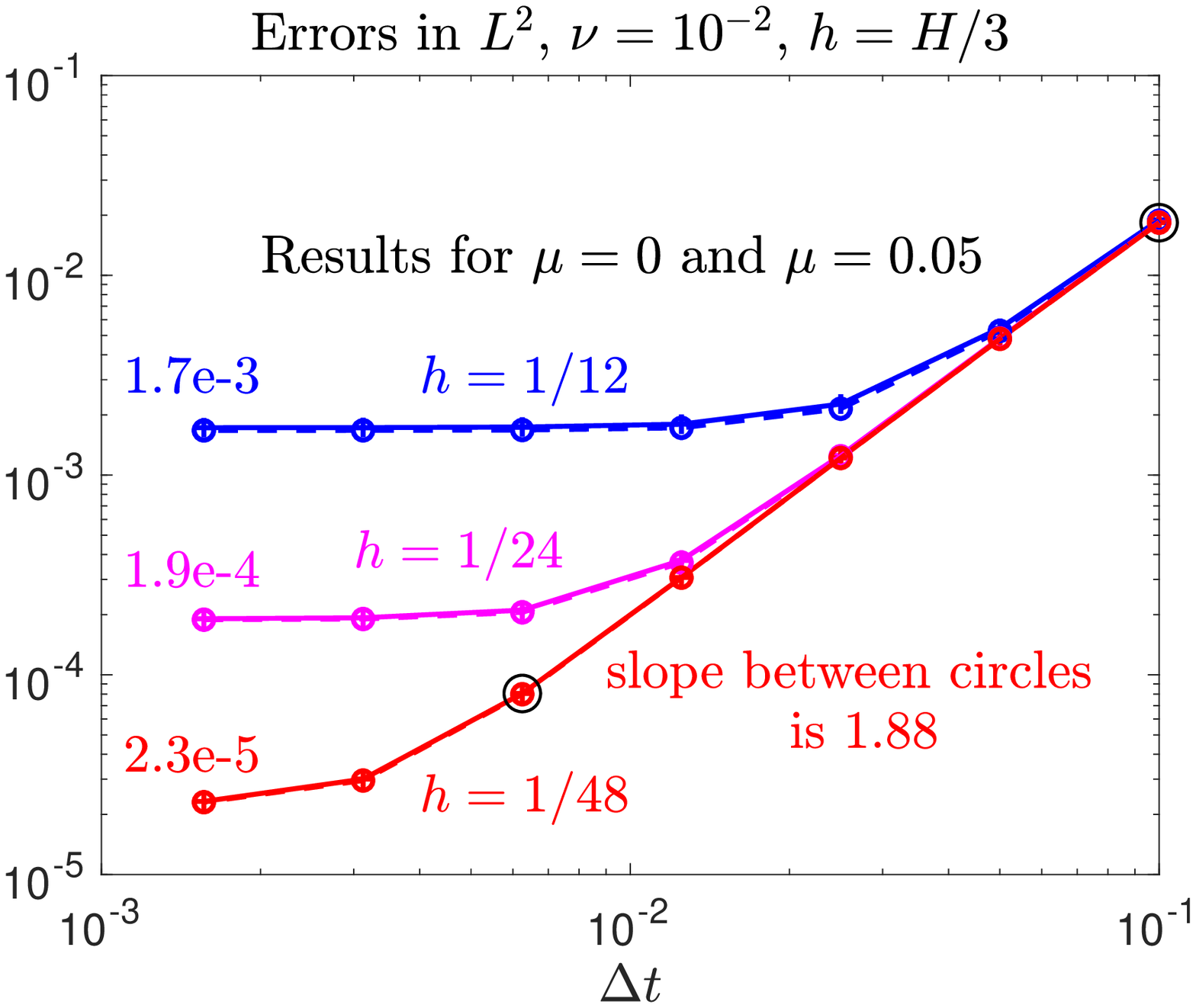}\,
\includegraphics[height=5truecm]{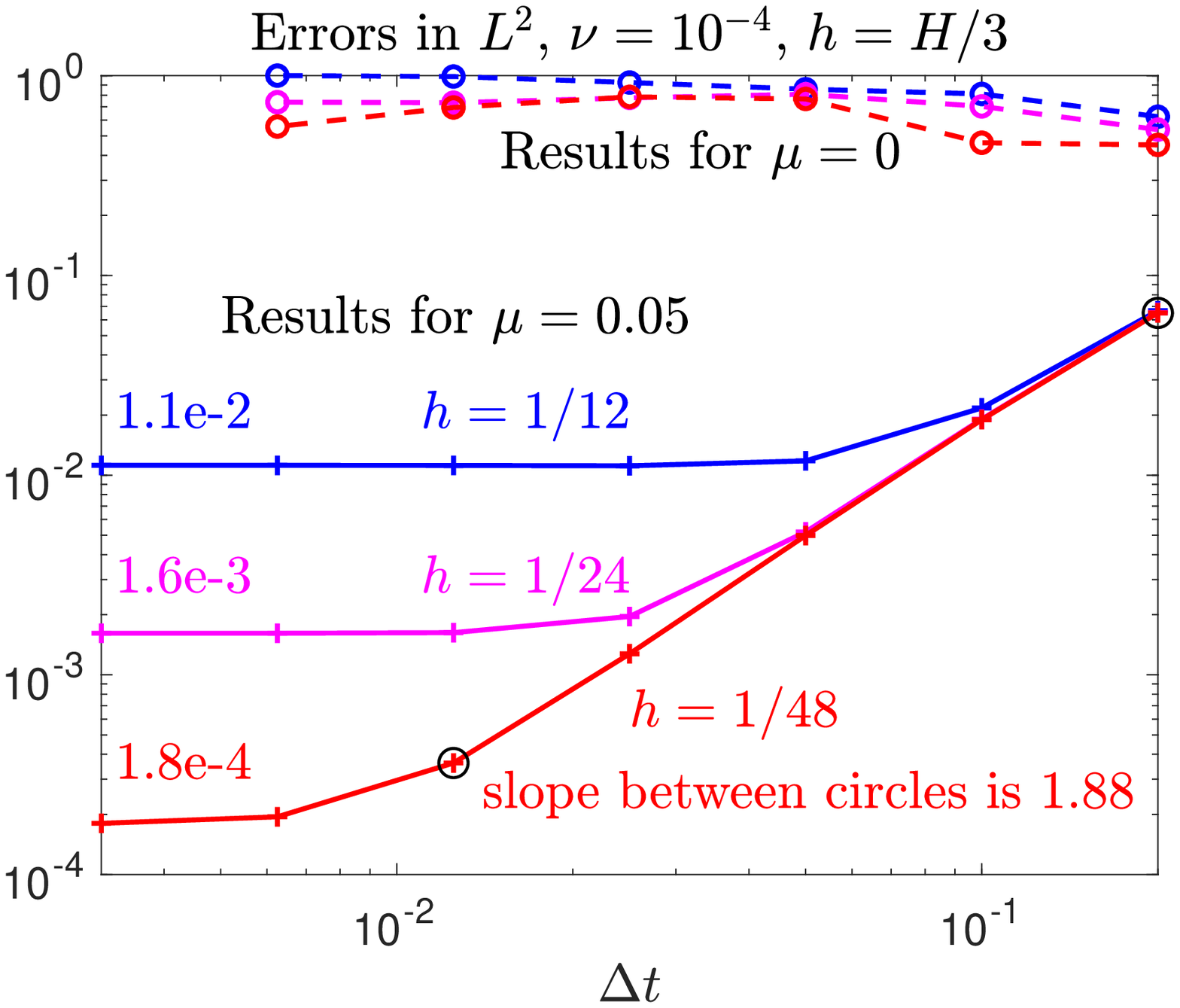}
\medskip
\includegraphics[height=5truecm]{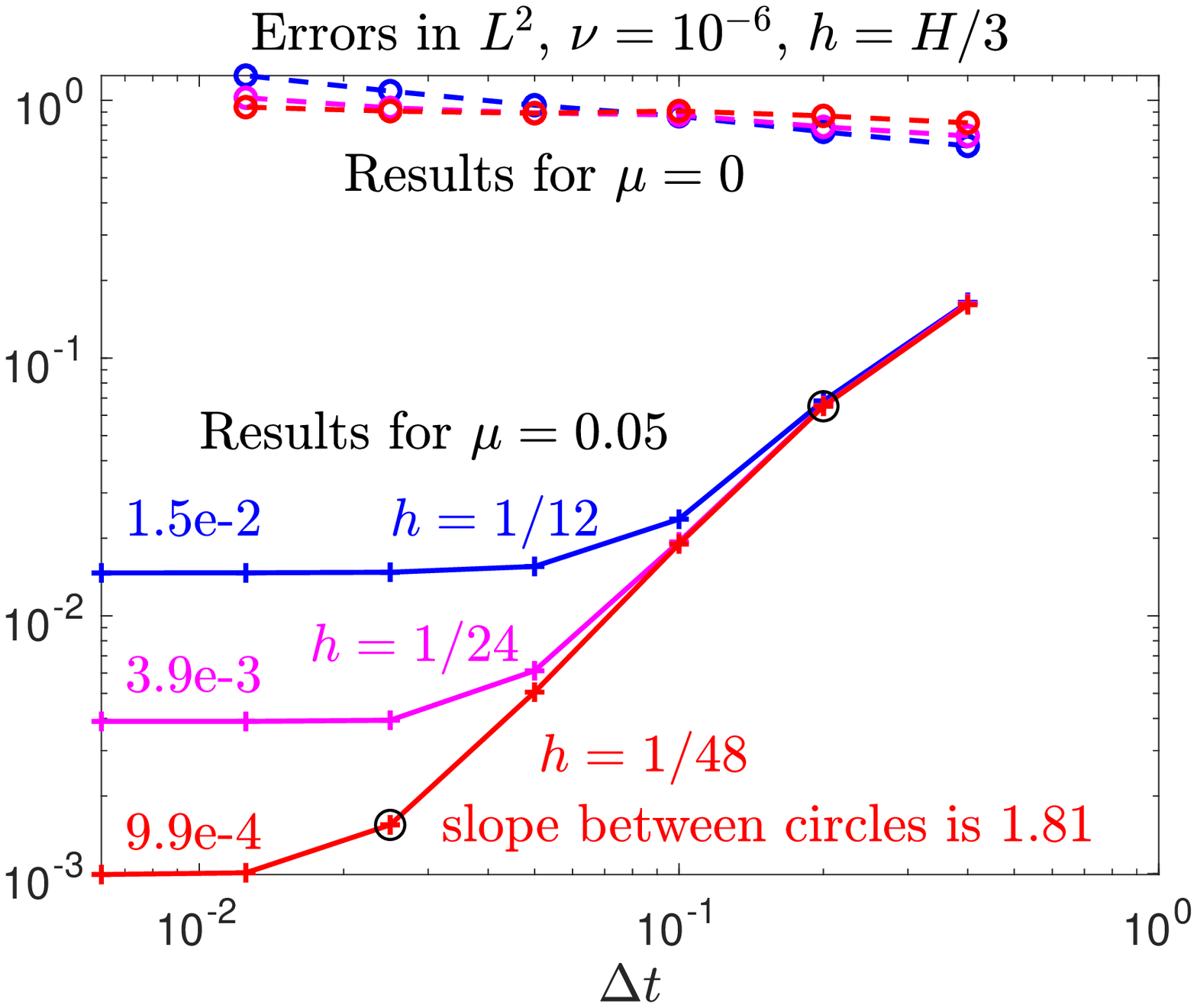}\,
\includegraphics[height=5truecm]{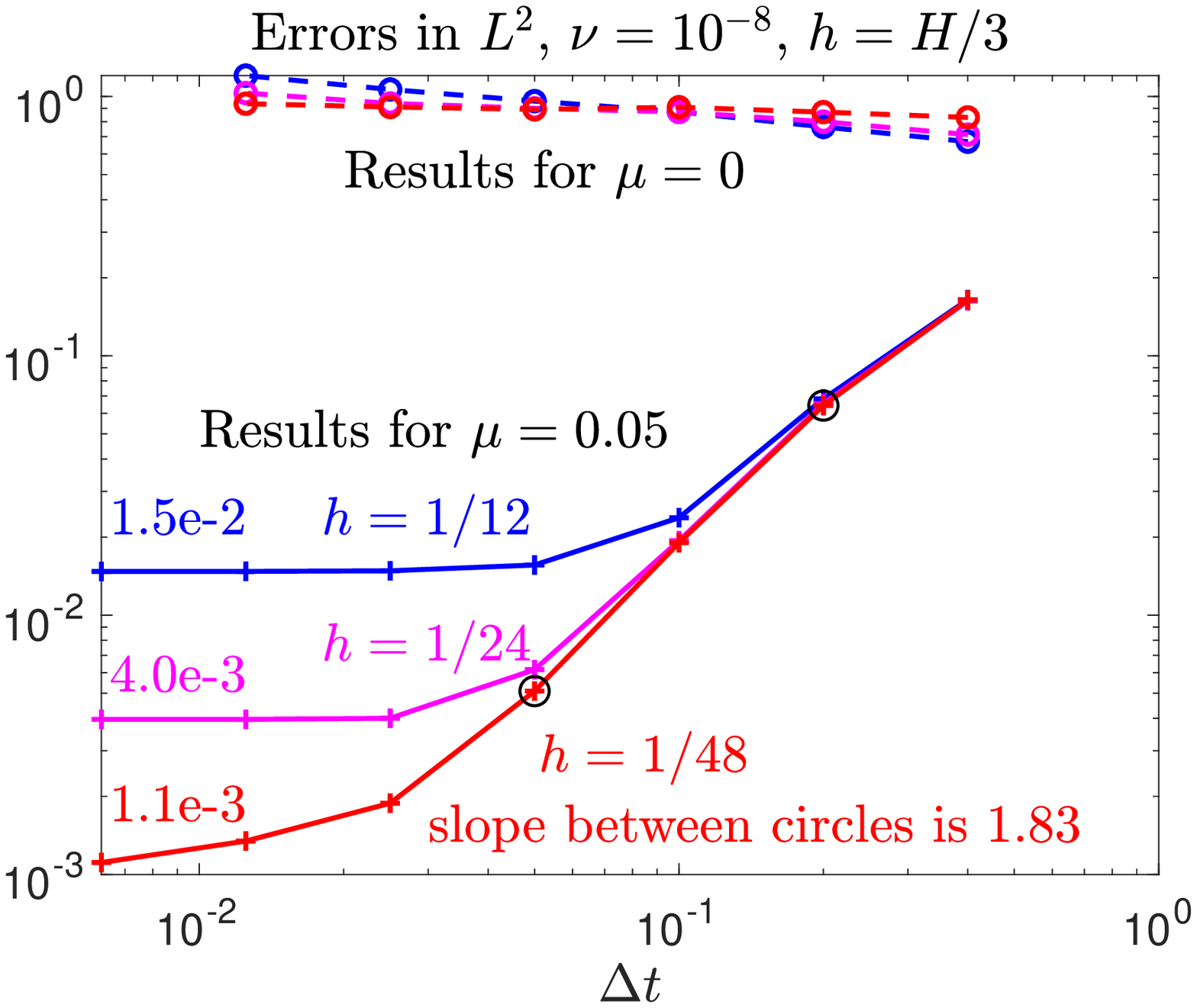}
\end{center}
\caption{Velocity errors vs $\Delta t$.\label{fig1}
}
\end{figure}
To check the order of convergence in time we show the slope of a
least squares fit to the results corresponding to~$h=1/48$, for the
values of~$\Delta t$ between the points marked with black circles. It can
be seen that all slopes have values between 1.81 and~$1.88$, confirming
the $O((\Delta t)^2)$ behaviour of the error whenever the error arising from
time integration dominates that arising from spatial discretization.
To check the order of convergence in space, we show the error
corresponding to~$\mu=0.05$ obtained on every mesh with the smallest
value of~$\Delta t$ used,
which, as commented above, we make sure it was sufficiently small so that the spatial
error dominates.  It can be seen that the error for~$\mu=0.05$ behaves as $O(h^3)$
for $\nu=10^{-2}$ and~$10^{-4}$, and $O(h^2)$ for $\nu=10^{-6}$ or smaller, confirming the second statement in~Theorem~\ref{th:main_bdf2_semimp}
and Remark~\ref{re:nu_large}. Furthermore, comparing the results for
$\nu=10^{-6}$ and~$\nu=10^{-8}$, we see that the (spatial) errors are practically
the same, confirming that the error constants in the
second statement in~Theorem~\ref{th:main_bdf2_semimp}
are independent of~$\nu^{-1}$. The only difference
that we have found,
as shown in Fig.~\ref{fig0}, is the slower rate of decay in time of the error in the initial condition for~$\nu=10^{-8}$.

With respect to the results corresponding to~$\mu=0$, we see a very different
behaviour depending on the size of~$\nu$. For~$\nu=10^{-2}$, they are practically
the same as those  corresponding to~$\mu=0.05$ and, hence, they show an
$O(h^3+(\Delta t)^2)$ behaviour, confirming the first statement
in~Theorem~\ref{th:main_bdf2_semimp}. For smaller values of~$\nu$, however,
the negative powers of~$\nu$ in the error bounds prevent the method from exhibiting
convergence for the values of~$h$ and~$\Delta t$ shown
in~Fig.~\ref{fig1} (presumably, convergence will be achieved for much smaller values
of~$h$). Fig.~\ref{fig1} clearly shows the beneficial effect of the grad-div term when
$\nu$ is small.

\section{Conclusions} We have obtained error bounds for fully discrete approximations with inf-sup stable mixed finite element methods in space
of a continuous downscaling data assimilation method for the two and three-dimensional Navier-Stokes equations. In the data assimilation algorithm measurements on a coarse mesh are given represented by different types of interpolation operators $I_H \bu$, where $I_H$ can be an interpolant for non smooth functions or a standard Lagrange interpolant. To our knowledge, only reference \cite{Ours} and the present paper consider the last case, since in previous references explicit use is made of bounds \eqref{eq:L^2inter} and~\eqref{eq:cotainter}, which are not valid for nodal (Lagrange) interpolation. In the method, a penalty term is added with the aim of driving the approximation towards the solution $\bu$ for which the measurements are known. For the time discretization we consider three different methods: the implicit Euler method and an implicit and a semi-implicit second order backward differentiation formula.For the spatial discretization we consider both the Galerkin method and the Galerkin method with grad-div stabilization.

Uniform error bounds in time have been obtained for the approximation to the velocity field $\bu$ for all the methods, extending the results in \cite{Ours} where the semi-discretization in space is considered.  For the Galerkin method the spatial bounds we prove are optimal, the rate of convergence of the method in $L^2$ being $r$ when using piecewise polynomials of degree $r-1$ in the velocity approximation. In the case where grad-div stabilization is added, the constants in the error bounds do not depend on inverse powers of the viscosity parameter $\nu$, which  is of importance in many applications where viscosity is orders of magnitude smaller than the velocity. For the Galerkin method with grad-div stabilization a rate of convergence $r-1$ is obtained in the $L^2$ norm of the velocity. This bound is sharp, as it is shown in the numerical experiments of the paper. Moreover, it can be clearly observed in the experiments, that for values of the viscosity smaller than $\nu=10^{-4}$ the Galerkin method does not achieve convergence in the range of values of the mesh size for which the Galerkin method with grad-div stabilization converges clearly with the predicted rate of convergence. It is thus to be remarked the dramatic effect of adding grad-div stabilization when the viscosity is small.

In the present paper, as in \cite{Ours}, as opposed to previous references, we do not demand any upper bound on the nudging parameter $\beta$. The authors of  \cite{Larios_et_al} had
 observed (see \cite[Remark 3.8]{Larios_et_al}) that the upper bound on~$\beta$ they required in the analysis does not hold in the numerical experiments, which is also corroborated by the numerical experiments in \cite{Ours}.

\bibliographystyle{abbrv}

\bibliography{references}

\end{document}